\documentclass[11pt,letterpaper]{article}
\usepackage[margin=1in]{geometry}
\usepackage{newtxtext}
\usepackage{amsmath}
\usepackage{amsthm}
\usepackage{newtxmath}
\usepackage{bm}
\usepackage{graphicx,xcolor}
\usepackage{microtype}
\usepackage{setspace}
\usepackage{endnotes}

\usepackage{enumitem}
\setlist{topsep=4pt,itemsep=3pt,parsep=0pt}
\usepackage{algorithm}
\usepackage{algpseudocode}
\usepackage{float}
\usepackage{booktabs}
\usepackage{tabularx}
\usepackage{array}
\usepackage[font=small,labelfont=bf,labelsep=period]{caption}
\usepackage{subfig}
\usepackage{placeins}
\usepackage{needspace}
\graphicspath{{figures/main/}{figures/appendix/}{tex/}}
\usepackage{natbib}
\bibpunct[, ]{(}{)}{,}{a}{}{,}

\newcommand{\BIBand}{and}
\usepackage[hidelinks,unicode]{hyperref}
\hypersetup{
  pdftitle={Learning Choice Model Trees for Feature-Based Multi-Product Pricing: Exact Optimization and Field Evidence},
  pdfauthor={Jiajie Zhang; Yanqiu Ruan; Xiao Jin; Chung Piaw Teo},
  pdfsubject={Feature-based pricing with choice model trees},
  pdfkeywords={market segmentation, feature-based pricing, choice model trees, multinomial logit, dynamic programming}
}
\usepackage{fancyhdr}
\newtheorem{theorem}{Theorem}
\newtheorem{lemma}{Lemma}
\newtheorem{proposition}{Proposition}

\BeforeBeginEnvironment{proposition}{\Needspace{9\baselineskip}}
\BeforeBeginEnvironment{theorem}{\Needspace{9\baselineskip}}
\DeclareMathOperator*{\argmax}{arg\,max}

\newcommand{\TABLE}[3]{%
  \centering\caption{#1}%
  {\small\setstretch{1.05}#2\par}%
  \smallskip{\footnotesize\setstretch{1.05}\raggedright #3\par}%
}
\makeatletter
\renewcommand\paragraph{\@startsection{paragraph}{4}{0pt}{6pt plus 2pt minus 1pt}{-0.6em}{\normalfont\normalsize\bfseries}}
\makeatother
\begin{document}
\raggedbottom
\thispagestyle{plain}
\begin{center}
{\fontsize{18}{22}\selectfont\bfseries
Learning Choice Model Trees for\\
Feature-Based Multi-Product Pricing:\\
Exact Optimization and Field Evidence\par}
\vspace{16pt}
\begin{tabular*}{\textwidth}{@{\extracolsep{\fill}}cccc@{}}
\large Jiajie Zhang\textsuperscript{1} &
\large Yanqiu Ruan\textsuperscript{1} &
\large Xiao Jin\textsuperscript{1} &
\large Chung Piaw Teo\textsuperscript{2}\\[6pt]
\small\href{mailto:jiajie_z@nus.edu.sg}{jiajie\_z@nus.edu.sg} &
\small\href{mailto:yanqiu.ruan@nus.edu.sg}{yanqiu.ruan@nus.edu.sg} &
\small\href{mailto:xiao.j@nus.edu.sg}{xiao.j@nus.edu.sg} &
\small\href{mailto:bizteocp@nus.edu.sg}{bizteocp@nus.edu.sg}
\end{tabular*}\par
\vspace{16pt}
{\small
\textsuperscript{1}Institute of Operations Research and Analytics, National University of Singapore\\
\textsuperscript{2}NUS Business School, National University of Singapore\par}
\end{center}
\vspace{12pt}
\begin{abstract}
Feature-based multi-product pricing uses customer characteristics to identify demand heterogeneity and tailor prices across products.
Choice model trees segment customers through interpretable feature rules and fit a demand model within each leaf.
Existing methods typically construct these trees greedily, selecting one myopic split at a time.
We develop optimal choice model trees with multinomial logit leaves (OCMT-MNL), jointly optimizing the tree and leaf models within a prescribed depth.
Our exact dynamic program derives closed-form Fenchel lower bounds during constrained Newton iterations and propagates them across nested and disjoint customer subsets, avoiding new fits and resuming unfinished fits without repeating completed work.
In synthetic experiments, it reduces exact leaf fits by 99.98\% and leaf evaluations by 86.13\%, achieving up to 7.15-fold speedups over unpruned dynamic programming.
One-dimensional lookup tables translate offline estimation into real-time pricing, with a revenue-loss bound quadratic in grid spacing under the fitted model.
Compared with greedy trees, OCMT-MNL achieves lower revenue loss with fewer leaves on synthetic data and better predictive fit on real data.
In a 23-week randomized experiment on ancillary seat pricing across 48 airline markets and 190{,}220 passengers, OCMT-MNL increases seat revenue per passenger by a statistically significant 11.3\% over static pricing.
\end{abstract}
\noindent\textbf{Keywords:} market segmentation; feature-based pricing; optimal choice model tree; multinomial logit; dynamic programming; randomized field experiment.


\section{Introduction}\label{sec:intro}

Firms often observe customer and transaction information before quoting prices. Feature-based pricing uses this information to tailor prices to heterogeneous demand \citep{BergemannBrooksMorris2015,ElmachtoubGuptaHamilton2021}. The problem becomes substantially harder when a firm prices a menu rather than a single product. Customer characteristics may affect preferences and price sensitivity, while changing one price can shift demand elsewhere in the menu. The firm must therefore learn how observed features shape demand across products and translate that knowledge into a price vector at the time of offer. Airline ancillary pricing is one prominent example. Before displaying an offer, an airline observes trip and booking information and must price a menu of ancillary products in real time. IdeaWorksCompany estimated worldwide airline ancillary revenue at \$157 billion for 2025, representing 15.7\% of total airline revenue \citep{IdeaWorks2025GlobalEstimate}. The same challenge arises more broadly wherever firms must capture heterogeneous multi-product demand in an intelligible form and translate it into prices at decision time.

Discrete choice models provide a natural foundation for multi-product demand. Under the random utility framework, a customer chooses the alternative with the highest utility, allowing a price change to shift demand across the product menu. Among these models, the multinomial logit (MNL) model is widely used because it is parsimonious and tractable for both estimation and pricing \citep{TalluriVanRyzin2004,Wang2021}. A single MNL, however, imposes one set of demand parameters on the entire population and may mask heterogeneity in preferences and price sensitivity. Latent class MNL models address this limitation by allowing demand parameters to vary across unobserved customer classes \citep{KamakuraRussell1989,GreeneHensher2003}, and observed characteristics can be used to predict class membership probabilities \citep{KamakuraWedelAgrawal1994}. Class membership nevertheless remains latent and probabilistic, so the resulting segments do not directly translate into deterministic and auditable rules based on observable customer characteristics.

Interpretability has become an increasingly important consideration in feature-based pricing. As firms use richer customer information to differentiate offers, researchers and regulators have paid closer attention to how pricing rules can be understood and examined \citep{CohenElmachtoubLei2022,DubeMisra2023}.\endnote{For recent regulatory attention to the use of consumer data in customer segmentation and targeted pricing, see \citet{FTC2025}.} A transparent representation allows managers to see which observable characteristics distinguish customers receiving different prices and whether those distinctions correspond to plausible differences in demand \citep{Rudin2019,Biggsetal2021}. It can also make unexpected recommendations easier to investigate and clarify how customer information enters pricing decisions. These benefits make a compact and explicit representation of heterogeneous demand particularly attractive for feature-based pricing.

Choice model trees offer a natural way to obtain such a representation. Unlike standard classification or regression trees, whose leaves contain class labels or constant predictions, a choice model tree places a structured choice model in each leaf. Its internal nodes partition customers using observable feature rules, while each leaf describes the demand behavior of the resulting segment \citep{Misic2016,Aouadetal2023}. A root-to-leaf path therefore provides an explicit rule for segment membership, and the corresponding leaf model captures how preferences and price sensitivity vary across the product menu. This structure allows managers to examine both how customers are segmented and how each segment responds to prices.

Nevertheless, the interpretability of a choice model tree depends critically on its depth. As a tree grows deeper, the number of segments and the length of its routing rules increase rapidly, making the resulting segmentation harder to understand and audit \citep{BertsimasDunn2017,Biggsetal2021}. Existing methods construct choice model trees through greedy recursive partitioning, selecting one split at a time \citep{Misic2016,Aouadetal2023}. Because early splits cannot be reconsidered, a locally attractive choice may force later levels to compensate, requiring more leaves to represent heterogeneity that a different tree could capture more compactly. The central question is therefore how to use a limited depth as effectively as possible to uncover an accurate customer segmentation. This calls for optimizing the tree structure and its leaf choice models jointly rather than split by split.

We address this problem by developing optimal choice model trees with MNL leaves (OCMT-MNL). For a prescribed depth, OCMT-MNL jointly selects the feature-based splits and estimates the MNL demand model in each leaf. We develop an exact algorithm that makes this global estimation computationally tractable. We also bridge offline estimation and real-time deployment. Once the tree has been fitted from historical data, an arriving customer is routed to a leaf, and the corresponding multi-product price vector is obtained from a leaf-specific one-dimensional lookup table indexed by a scalar customer score. Our synthetic experiments show that OCMT-MNL can be estimated substantially faster than an unpruned exact method and yields more compact, better-performing trees than greedy construction. We further deploy the resulting pricing policy in a 23-week randomized experiment with an Asia-Pacific airline, where it increases seat revenue per passenger by 11.3\% relative to the airline's static pricing baseline.

\subsection{Contributions}\label{sec:contributions}

Our contributions are threefold.

\begin{itemize}
\item \textbf{Exact OCMT-MNL estimation at practical cost.}
We formulate OCMT-MNL estimation as a depth-limited global tree problem and develop a memoized dynamic program that returns its global optimum (Theorem~\ref{thm:dp}). To handle the repeated constrained MNL estimation required across candidate leaves, we make leaf optimization part of the tree search itself. At intermediate constrained Newton iterates, quantities already computed for fitting yield closed-form Fenchel lower bounds before convergence (Proposition~\ref{prop:newton_linear_bound}), and we characterize their convergence and local tightness (Proposition~\ref{prop:local_accuracy}). If a bound does not prune, fitting continues from the same solver state. Cross-set propagation further transfers the resulting bound information across nested and disjoint datasets, allowing work on one leaf to prevent another leaf fit from starting (Proposition~\ref{prop:cross_set_propagation}). A four-stage ablation isolates these mechanisms. The full method reduces exact leaf fits by 99.98\% and MNL context evaluations by 86.13\%, yielding up to a 7.15$\times$ computational speedup over the unpruned dynamic program.

\item \textbf{From offline estimation to real-time pricing.}
We characterize the pricing problem induced by each fitted leaf and reduce it to a scalar fixed point, yielding a unique price map that is monotone and Lipschitz in a scalar feature index (Proposition~\ref{prop:pricing_structure}). Within a correctly specified leaf, we further show that likelihood fit controls pricing regret, providing a decision-theoretic rationale for the estimation objective (Theorem~\ref{thm:likelihood_pricing}). These results turn the tree estimated offline into a real-time pricing rule requiring only tree routing, evaluation of the scalar index, and a query to the leaf's one-dimensional lookup table. Relative to exact within-leaf optimization, the resulting revenue loss decreases quadratically as the lookup grid is refined (Proposition~\ref{prop:lookup}).

\item \textbf{Model comparison and field validation.}
Across three synthetic regimes that vary the agreement between locally and globally preferred splits, OCMT-MNL outperforms the greedy choice model tree with MNL leaves (GCMT-MNL) \citep{Aouadetal2023} in predictive fit, structural recovery, and downstream pricing. On historical data from 48 airline markets, OCMT-MNL attains a statistically significant out-of-sample likelihood improvement over GCMT-MNL and performs better in 35 markets. These comparisons evaluate global versus greedy tree construction. Separately, a 23-week randomized experiment covering 190{,}220 passengers evaluates the complete OCMT-MNL pricing policy against the airline's static baseline. OCMT-MNL increases seat revenue per passenger by 11.3\%, with positive market-level differences in 35 markets. The gain is associated mainly with a higher share of passengers in positive-revenue bookings and more premium seats sold per passenger.

\end{itemize}

\subsection{Related Literature}\label{sec:literature}

Our work lies at the intersection of feature-based pricing and optimal trees.

\paragraph{Feature-Based Pricing and Market Segmentation.}
One stream of feature-based pricing treats pricing as a sequential learning problem: the seller observes customer features, posts a price, and learns from the purchase decision. This literature studies binary feedback, high-dimensional features, heterogeneous price sensitivity, and model misspecification \citep{CohenLobelPaesLeme2020,JavanmardNazerzadeh2019,BanKeskin2021,NambiarSimchiLeviWang2019}; \citet{denBoer2015} provides a survey. Multi-product extensions combine online learning with choice models, including MNL-bandit assortment optimization and contextual assortment and pricing \citep{AgrawalMNLBandit2019,Javanmardetal2020,GoyalPerivier2022}. Their objective is to balance learning and earning over time. OCMT-MNL instead uses an offline sample to recover a compact segmentation and a choice model for each segment.

A complementary stream estimates demand from historical offers and choices and then optimizes decisions against the fitted model. \citet{ChenOwenPixtonSimchiLevi2022}, for example, study feature-based pricing and assortment under logit demand and establish finite-sample revenue guarantees. A related line of work asks how estimation error affects downstream decisions. \citet{XuWang2021} bound single-product pricing regret by excess expected negative log-likelihood; \citet{TanFrazier2022} derive local quadratic regret bounds for smooth estimate-then-optimize problems; and \citet{Zhangetal2026ContextualMNL} construct pessimistic offline policies for contextual MNL assortment and pricing when the logged data provide suitable local coverage. Our likelihood-to-revenue result contributes to this line by treating plug-in multi-product MNL pricing under linear parameter constraints and price bounds.

These studies use customer features to personalize decisions, but compact market segmentation is not their primary object. \citet{CuiHamilton2026} make segmentation central by mapping features to a scalar valuation and jointly selecting segments and prices for a single product; under independent monotone-hazard-rate regression errors, dynamic programming recovers interval segments in that valuation. OCMT-MNL instead estimates segmentation and segment-level demand jointly from multi-product choice data. Prescriptive trees map features directly to treatments or decisions \citep{Kallus2017,Bertsimasetal2019,ElmachtoubLiangMcNellis2020}; our leaves retain a choice model for evaluating continuous multi-product price vectors. Neither approach globally constructs a feature-rule tree with fitted multi-product choice models.

\paragraph{Optimal Trees.}
Greedy classification and regression tree (CART) algorithms choose one split at a time and do not optimize the resulting tree as a whole \citep{Breimanetal1984}. Exact classification tree methods follow two main approaches. Mixed-integer optimization formulations construct the complete tree jointly \citep{BertsimasDunn2017,Aghaeietal2025}. Specialized dynamic programming and branch-and-bound algorithms instead solve repeated subtree problems once, cache their values, and use lower bounds to avoid unpromising branches \citep{Aglinetal2020,Demirovicetal2022}. \citet{vanderLindenetal2023} characterize when a tree objective and its constraints permit optimal subtrees to be solved independently and combined by dynamic programming, and \citet{Britaetal2025} extend exact dynamic programming to continuous split thresholds. These algorithms benefit from the structure of classification leaves: the best label and its loss follow directly from class counts, making both leaf evaluation and many lower bounds inexpensive.

That advantage weakens when a leaf contains a fitted model. Early model tree methods, including M5 and model-based recursive partitioning, fit linear or more general parametric models in the leaves but construct the tree through local recursive splitting \citep{Quinlan1992,Zeileisetal2008}. \citet{Dunn2018} develops mixed-integer formulations for globally optimized regression trees with constant or linear leaf predictions, but also documents their limited scalability. The closest work on dynamic programming is \citet{vandenBosetal2024}, who develop algorithms that globally optimize the structure of piecewise constant, simple linear, and multiple linear regression trees. For constant and simple linear leaves, the fitted loss can be recovered from additive sufficient statistics. This property supports a specialized depth-two routine that avoids fitting every candidate leaf separately. For multiple linear elastic-net leaves, the paper fits each leaf by coordinate descent and leaves an efficient per-instance cost breakdown open. Consequently, no analogous depth-two routine is available. This distinction is important for our setting because an MNL leaf is also a multivariate estimation problem rather than a count-based terminal cost.

Choice model trees face a similar computational challenge because evaluating a candidate leaf requires fitting its choice model. The methods closest to OCMT-MNL are constructed greedily. \citet{Misic2016} gives an early procedure that splits on customer features and fits an MNL model in each terminal segment. \citet{Aouadetal2023} introduce the broader Market Segmentation Tree framework, in which splits are selected to improve the fit of response models rather than to group customers solely by feature similarity. Their Choice Model Tree specialization fits an MNL model in every leaf and uses recursive partitioning to choose the locally best split; its implementation uses parallel computation and warm starts for scalability. The method jointly learns a segmentation and its response models, but it does not optimize the tree globally or address downstream pricing.

\citet{ElmachtoubGoutamLederman2025} connect this modeling approach to pricing scheduled services at Amazon. Their system uses a Market Segmentation Tree and modified MNL leaf models with local reference-price effects. Its pricing heuristics search in a small number of dimensions and are designed for short response times. It therefore provides the closest end-to-end example of tree-based choice modeling and multi-product pricing, while retaining the greedy tree-construction step inherited from the Market Segmentation Tree framework. These two lines leave a specific gap. Exact tree methods do not address fitted MNL demand models, while choice model trees retain greedy construction. To our knowledge, OCMT-MNL is the first exact method that jointly optimizes a customer segmentation tree of prescribed depth and the MNL demand models fitted at its leaves.

\subsection{Organization}\label{sec:organization}
The remainder of the paper is organized as follows. Section~\ref{sec:formulation} formulates the choice model tree and the segment-level pricing problem. Section~\ref{sec:dp} defines the estimation problem and develops the exact dynamic program and its pruning rules. Section~\ref{sec:solution} characterizes within-leaf pricing, relates likelihood fit to pricing loss, and develops the rule for online implementation. Section~\ref{sec:synthetic} evaluates the computational gains from pruning and compares the optimal and greedy choice model trees with respect to both predictive and prescriptive performance. Section~\ref{sec:field} presents the large-scale field experiment results for feature-based seat pricing with a major Asia-Pacific airline. Finally, Section~\ref{sec:conclusions} concludes the paper. The appendices provide the complete algorithms and proofs, the synthetic data-generating process, an out-of-sample comparison on historical airline data, and additional details on field-experiment sampling and the learned segmentations.

\section{Model and Problem Formulation}\label{sec:formulation}

OCMT-MNL combines a feature-based partition, an MNL demand model within each segment, and the pricing problem induced by that model. We formalize these objects under common depth and leaf-size constraints. The formulation separates the offline task of estimating the tree from the segment-level pricing problem solved after a customer has been routed to a leaf.

\subsection{Choice Model Trees with MNL Leaves}\label{sec:mob}
A firm sells a set of products $\mathcal{J}$ to a heterogeneous population of customers. The firm observes historical transaction data $\mathcal{D} = \{(\bm{x}_i, \bm{p}_i,y_i)\}_{i\in\mathcal{I}}$, with index set $\mathcal{I}$ and sample size $N = |\mathcal{I}|$. Here $\bm{x}_i \in \mathcal{X}$ is the customer feature vector, $\bm{p}_i \in \mathbb{R}^{|\mathcal{J}|}_{+}$ the prices offered to customer~$i$, and $y_i \in \{0\}\cup\mathcal{J}$ the observed choice, with $y_i=0$ denoting no purchase. We take the operational feature domain~$\mathcal X$ to be compact.

Leaf utilities use covariates $\mathcal F=\mathcal F_{\textup{cont}}\cup\mathcal F_{\textup{bin}}$. The set $\mathcal F_{\textup{cont}}$ contains numerical covariates, and $\mathcal F_{\textup{bin}}$ contains binary covariates, including indicators for categories of categorical variables. Tree branches use a finite set of split candidates $\mathcal F'$ derived from them: binary indicators enter directly and numerical covariates through threshold indicators. This separation keeps the branch search finite and interpretable without increasing the dimension of the leaf model. Thus $\mathcal X\subset\mathbb R^{|\mathcal F|}$.

The full-tree choice model is a map $h:\mathcal{X}\times \mathbb{R}_{+}^{|\mathcal{J}|} \to \Delta_{\mathcal{J}}$, where $\Delta_{\mathcal{J}}:=\{\bm{\rho}\in\mathbb{R}_{+}^{|\mathcal{J}|+1}:\sum_{j\in\{0\}\cup\mathcal{J}}\rho_j=1\}$ is the probability simplex over the no-purchase option and the products. Its component $h_j(\bm{x},\bm{p})$ is the predicted probability of option~$j$. OCMT-MNL represents $h$ by a binary tree whose leaves carry independent MNL parameters. Let $\mathfrak T(\mathcal F',D,N_{\min})$ denote the class of trees with maximum depth~$D$, splits in~$\mathcal F'$, and at least~$N_{\min}$ observations per leaf.

\begin{figure}[htb]
\centering
\includegraphics[width=0.82\linewidth]{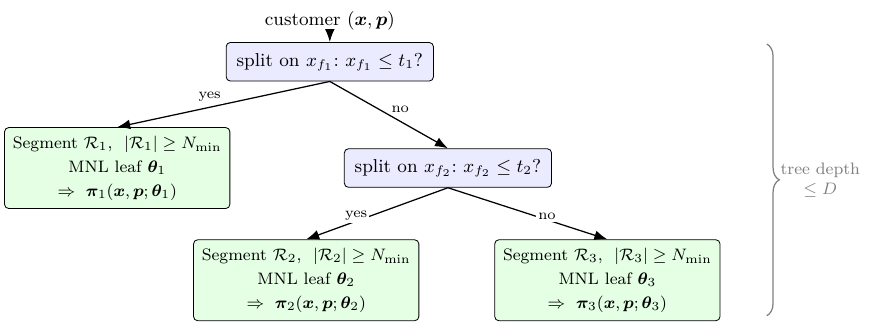}
\caption{The OCMT-MNL choice model tree: internal nodes split on candidates in~$\mathcal{F}'$ up to depth~$D$; each leaf is a market segment~$\mathcal{R}_k$ ($\ge N_{\min}$ observations) carrying its own MNL with parameters $\bm{\theta}_k = (\bm{\alpha}_k, \bm{\beta}_k, \bm{\gamma}_k)$.\label{fig:model_tree}}
\end{figure}

A feasible tree $\mathbb{T} \in \mathfrak{T}(\mathcal{F}', D, N_{\min})$ partitions the feature space $\mathcal{X}$ into $K(\mathbb{T})$ disjoint regions $\{\mathcal{R}_k(\mathbb{T})\}_{k=1}^{K(\mathbb{T})}$. Each region is a market segment, and the number of segments $K(\mathbb{T})$ is determined by the tree rather than fixed a priori. Within segment~$k$, utility is $u_j(\bm{x},\bm{p},\bm{\theta}_k)=\alpha_{jk}+\bm{\beta}_k^\top\bm{x}-\gamma_{jk}p_j$, where $\bm{\theta}_k = (\bm{\alpha}_k, \bm{\beta}_k, \bm{\gamma}_k)$, $\bm{\alpha}_{k} \in \mathbb{R}^{|\mathcal{J}|}$ is the vector of product-specific intercepts, $\bm{\beta}_{k} \in \mathbb{R}^{|\mathcal{F}|}$ is the common feature coefficient in the leaf, and $\bm{\gamma}_{k} \in [\underline\gamma,\infty)^{|\mathcal{J}|}$, with $\underline\gamma>0$, is the vector of product-specific price sensitivities. This shared feature coefficient is the structural restriction used in Section~\ref{sec:solution} to reduce within-leaf pricing to a scalar-index problem. Let $N_0:=2|\mathcal J|+|\mathcal F|$ be the number of parameters in each leaf.

The leaf MNL probability vector is $\bm\pi_k(\bm x,\bm p;\bm\theta_k)=(\pi_{jk}(\bm x,\bm p;\bm\theta_k))_{j\in\{0\}\cup\mathcal J}$, with components
\begin{equation}
\pi_{jk}(\bm{x},\bm{p};\bm{\theta}_k) =
\begin{cases}
\displaystyle \frac{1}{1+\sum_{\ell\in\mathcal{J}}\exp\{u_\ell\}},  &  j=0, \\[8pt]
\displaystyle \frac{\exp\{u_j\}}{1+\sum_{\ell\in\mathcal{J}}\exp\{u_\ell\}},  &  j\in\mathcal{J}.
\end{cases}
\end{equation}
All leaves share the nonempty closed polyhedron $\Theta = \{\bm{\theta} \in \mathbb{R}^{N_0} : \bm A_{\Theta}\bm{\theta} \le \bm b_{\Theta}\}$. Linear constraints are important in pricing applications because economically meaningful demand requires positive price sensitivities. We therefore include $\bm\gamma\ge\underline\gamma\bm 1$ in~$\Theta$; the same representation accommodates other application-specific bounds and shape restrictions. We assume that every finite leaf problem attains its minimum; compactness of~$\Theta$ is sufficient. Both the number of segments $K(\mathbb{T})$ and the parameter collection $\{\bm\theta_k\}_{k=1}^{K(\mathbb{T})}$ are induced by the tree structure
$\mathbb{T}$; when this dependence is clear from context, we suppress it to
lighten notation.

Given a choice model tree $(\mathbb{T}, \{\bm{\theta}_k\}_{k=1}^{K(\mathbb{T})})$, the induced choice model is
\begin{equation}
	h(\bm{x},\bm{p})
	=
	\sum_{k=1}^{K(\mathbb{T})}
	\mathbb{I}\{\bm{x}\in \mathcal{R}_k(\mathbb{T})\}
	\bm\pi_k(\bm{x},\bm p;\bm\theta_k).
\end{equation}
This choice model tree with MNL leaves is the \emph{OCMT-MNL model}; Section~\ref{sec:estimation} develops its estimation from data.

\subsection{Segment-Level Pricing}\label{sec:pricing_problem}
Given a fitted choice model tree $(\mathbb{T}, \{\bm{\theta}_k\}_{k=1}^{K(\mathbb{T})})$, fix a customer feature vector~$\bm{x}$ and let $k=k(\bm{x};\mathbb{T})$ denote its unique leaf. Conditional on this leaf, the prescribed price vector~$\bm p_k^*(\bm x)$ solves
\begin{equation}\label{obj:revenue}
\bm p_k^*(\bm{x})\in
\arg\max_{\bm{p}}
\left\{
\sum_{j\in\mathcal{J}}
p_j\,
\frac{\exp\{u_j(\bm{x},\bm{p},\bm{\theta}_k)\}}
{1+\sum_{\ell\in\mathcal{J}}\exp\{u_\ell(\bm{x},\bm{p},\bm{\theta}_k)\}}
\;:\;
\underline{p}_j \le p_j \le \bar{p}_j,\ \forall j\in\mathcal{J}
\right\}.
\end{equation}
Here $0\le\underline{p}_j<\bar{p}_j<\infty$ and $\mathcal P:=\prod_{j\in\mathcal J}[\underline p_j,\bar p_j]$. Distinct from the parameter constraints defining~$\Theta$, these price bounds encode business, regulatory, or experimental restrictions. Section~\ref{sec:solution} characterizes~\eqref{obj:revenue} and develops an efficient lookup-table scheme.

\section{Dynamic Programming for Optimal Choice Model Trees}\label{sec:dp}

Exact OCMT-MNL estimation couples a discrete search over trees with repeated constrained MNL fits. A direct Bellman recursion is exact, but it treats each leaf cost as available only after numerical convergence. We instead make the intermediate optimization path available to the recursion. Partial leaf fits yield lower bounds, and those bounds propagate across related datasets and Bellman states. The resulting search can eliminate a leaf before fitting begins or stop a fit early, while preserving the work needed if the leaf remains competitive.

\subsection{Penalized Estimation Model}\label{sec:estimation}
We estimate the tree and its leaf parameters by minimizing penalized negative log-likelihood, with penalty $\lambda\ge0$ and minimum leaf size $N_{\min}\in\{1,\ldots,N\}$. For a candidate tree $\mathbb{T}\in\mathfrak{T}(\mathcal{F}',D,N_{\min})$, let $\mathcal{I}_k(\mathbb{T})=\{i\in\mathcal{I}:\bm{x}_i\in\mathcal{R}_k(\mathbb{T})\}$ denote the observations assigned to leaf~$k$, and let $\mathcal D_k(\mathbb{T})=\{(\bm x_i,\bm p_i,y_i):i\in\mathcal I_k(\mathbb{T})\}$ be the corresponding leaf dataset. Dropping the leaf index, write $\bm\pi_i(\bm\theta)=(\pi_{ij}(\bm\theta))_{j\in\{0\}\cup\mathcal J}$ and $\ell_i(\bm\theta)=-\log\pi_{i,y_i}(\bm\theta)$ for the MNL probability vector and negative log-likelihood of observation~$i$ under $\bm\theta\in\Theta$.

For any subset $\mathcal{D}_s\subseteq\mathcal{D}$ with index set $\mathcal{S}$, define its penalized leaf cost as
\begin{equation}\label{eq:leaf_cost}
H(\mathcal{D}_s)
=
\begin{cases}
\displaystyle
\min_{\bm{\theta}\in\Theta}
\left\{
\sum_{i\in\mathcal{S}}\ell_i(\bm{\theta})
+
\lambda N_0
\right\}
& |\mathcal{D}_s|\ge N_{\min},\\[6pt]
+\infty,
& \text{otherwise}.
\end{cases}
\end{equation}
The MNL negative log-likelihood is convex in $\bm{\theta}$, and $\Theta$ is the common polyhedral feasible set defined in Section~\ref{sec:mob}. Each finite leaf cost is therefore a constrained convex estimation problem. The OCMT-MNL estimation problem is
\begin{equation}\label{eq:ocmt_mnl_est}
\textup{[OCMT-MNL-EST]}\qquad
\min_{\substack{\mathbb{T}\in\mathfrak{T}(\mathcal{F}',D,N_{\min})\\
\bm{\theta}_k\in\Theta,\ k=1,\ldots,K(\mathbb{T})}}
\left\{
\sum_{k=1}^{K(\mathbb{T})}
\sum_{i\in\mathcal{I}_k(\mathbb{T})}
\ell_i(\bm{\theta}_k)
+
\lambda K(\mathbb{T})N_0
\right\}.
\end{equation}

The objective in~\eqref{eq:ocmt_mnl_est} is predictive; Section~\ref{sec:pricing} connects it to downstream pricing.

With the negative log-likelihood objective in~\eqref{eq:ocmt_mnl_est}, the AIC- and BIC-based penalties used below are $\lambda_{\textup{AIC}}=1$ and $\lambda_{\textup{BIC}}(n)=\tfrac12\log n$, respectively, for an estimation sample of size~$n$.

For a fixed tree, leaf parameters are uncoupled, so its value is the sum of the leaf costs $H(\mathcal{D}_k(\mathbb{T}))$.

\paragraph{Leaf solver and warm starts.}
We evaluate finite leaf costs by constrained Newton, implemented as sequential quadratic programming (SQP) with the exact Hessian and a feasible initial point \citep{NocedalWright2006}. At a feasible iterate~$\bm\theta_t$, let $\bm g_t$ and $\bm G_t$ be the gradient and Hessian of the unpenalized leaf negative log-likelihood. The SQP direction~$\bm d_t$ solves the convex quadratic model
\begin{equation}\label{eq:constrained_newton_qp}
\min_{\bm d}
\left\{
\bm g_t^\top\bm d+\frac12\bm d^\top\bm G_t\bm d
\;:\;
\bm A_\Theta(\bm\theta_t+\bm d)\le\bm b_\Theta
\right\},
\end{equation}
and returns its Karush--Kuhn--Tucker (KKT) multipliers~$\bm\mu_t$ from the same QP solve. A damped line search along~$\bm d_t$ ensures sufficient decrease. Because both endpoints are feasible and~$\Theta$ is convex, every trial point remains feasible. All leaves share~$\Theta$, so a fitted parent parameter is a feasible initial point for either child and is used whenever available; the root starts from a fixed feasible point. This warm start changes only the initial iterate, not the leaf optimum. Each SQP iteration evaluates the MNL probabilities, gradient, and Hessian, which we call an \emph{MNL context evaluation}. Section~\ref{sec:acceleration} uses the same context, direction, and multipliers to construct a Fenchel bound; if the bound does not prune, constrained Newton continues from the stored solver state.

\subsection{Exact Bellman Recursion}\label{sec:bellman}

For any customer subset, the search compares stopping at one MNL leaf with splitting and selecting the best admissible subtree on each side. Each split is therefore assessed together with its continuation within the remaining depth, rather than by its immediate improvement in fit. Because this decision depends only on the subset and remaining depth, it admits a Bellman recursion.

We instantiate the finite set of split candidates introduced in Section~\ref{sec:mob} as follows. All covariates in $\mathcal F_{\textup{bin}}$ are retained as split candidates. After scaling numerical features to $[0,1]$, choose a finite threshold set $\mathcal T_f\subset[0,1]$ for each $f\in\mathcal F_{\textup{cont}}$, and let each $t\in\mathcal T_f$ induce the binary split indicator $\mathbb I\{x_f\le t\}$. Thus $\mathcal{F}'=\mathcal{F}_{\textup{bin}}\cup\{(f,t):f\in\mathcal F_{\textup{cont}},\ t\in\mathcal T_f\}$.
For each split candidate $f\in\mathcal F'$, let $z_{if}\in\{0,1\}$ denote the resulting binary value for observation~$i$. With this finite split set fixed, the dynamic program optimizes over the depth-limited hypothesis class $\mathfrak{T}(\mathcal F',D,N_{\min})$.

For any subset $\mathcal D_s=\{(\bm{x}_i,\bm p_i,y_i)\}_{i\in\mathcal S}$ and depth $d\in\{0,\ldots,D\}$, define the state $(\mathcal D_s,d)$. Let $V(\mathcal D_s,d)$ be the minimum penalized negative log-likelihood attainable by any feasible subtree rooted at this state with remaining depth $D-d$. Splitting on $f\in\mathcal F'$ creates $\mathcal D_s^{f=b}:=\{(\bm x_i,\bm p_i,y_i)\in\mathcal D_s:z_{if}=b\}$ for $b\in\{0,1\}$. The admissible set is $\mathcal A(\mathcal D_s,d):=\{f\in\mathcal F':d<D,\ \min\{|\mathcal D_s^{f=0}|,|\mathcal D_s^{f=1}|\}\ge N_{\min}\}$.
With the convention that the minimum over an empty set is $+\infty$, the unpruned Bellman recursion is
\begin{equation}\label{eq:dp}
V(\mathcal D_s,d)
=
\min\Bigl\{
H(\mathcal D_s),\;
\min_{f\in\mathcal A(\mathcal D_s,d)}
\left\{
V(\mathcal D_s^{f=0},d+1)+V(\mathcal D_s^{f=1},d+1)
\right\}
\Bigr\},
\end{equation}
and $V(\mathcal D_s,d)=+\infty$ whenever $|\mathcal D_s|<N_{\min}$. In particular, at depth $D$ the admissible split set is empty and the recursion reduces to $V(\mathcal D_s,D)=H(\mathcal D_s)$. The optimal value of \textup{[OCMT-MNL-EST]} is $V(\mathcal D,0)$.

\begin{theorem}[Optimality and complexity]\label{thm:dp}
If each finite leaf cost is evaluated exactly, Algorithm~\ref{alg:dp_compact_final} returns an optimal solution of \textup{[OCMT-MNL-EST]} over $\mathfrak T(\mathcal F',D,N_{\min})$. When the leaf solver is run to numerical tolerance~$\varepsilon$, let $m_{\Theta}$ be the number of linear constraints, $\overline m_{\textup{Newt}}(N,\varepsilon)$ the maximum number of constrained Newton iterations for a feasible leaf with at most $N$ observations, and $\mathcal T_{\textup{QP}}(N_0,m_\Theta,\varepsilon)$ the cost of one constrained Newton QP solve. The running time is
\[
O\!\left(
\left\{\sum_{d=0}^{D}\binom{|\mathcal F'|}{d}2^d\right\}
\left[
\overline m_{\textup{Newt}}(N,\varepsilon)
\left\{NN_0^2+m_{\Theta}N_0+
\mathcal T_{\textup{QP}}(N_0,m_\Theta,\varepsilon)\right\}
+|\mathcal F'|N
\right]
\right).
\]
\end{theorem}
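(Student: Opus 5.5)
The plan has two parts: optimality by induction on remaining depth, and running time by counting memoized states and bounding the work per state. Algorithm~\ref{alg:dp_compact_final} is not shown before the statement. I will treat it as a memoized evaluation of the Bellman recursion~\eqref{eq:dp} whose pruning rules discard a candidate only when a valid lower bound certifies it cannot beat the incumbent. I will first prove that the unpruned recursion is exact, then argue that pruning preserves the returned value.

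\textbf{Optimality.} For each state $(\mathcal D_s,d)$, let $W(\mathcal D_s,d)$ be the optimal penalized objective over feasible subtrees rooted at $\mathcal D_s$ with remaining depth $D-d$, splits in $\mathcal F'$, and leaves of size at least $N_{\min}$. I will show $V=W$ by backward induction on $d$.

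At $d=D$, the only feasible subtree is a single leaf. Its optimal objective is $H(\mathcal D_s)$, which is $+\infty$ when $|\mathcal D_s|<N_{\min}$.

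For $d<D$, a feasible subtree is either a single leaf, with cost $H(\mathcal D_s)$ because the leaf parameters are chosen optimally within $\Theta$, or a root split $f$ with two feasible child subtrees. In the split case the objective is additive: the likelihood terms separate over leaves, and the penalty $\lambda K N_0$ is the sum of $\lambda N_0$ per leaf. Leaf parameters are uncoupled across leaves since all leaves share $\Theta$ with no cross-leaf constraint. So the best subtree with root split $f$ is obtained by choosing the two children optimally and independently, giving $W(\mathcal D_s^{f=0},d+1)+W(\mathcal D_s^{f=1},d+1)$.

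Feasibility of both children forces each child to have size at least $N_{\min}$. Conversely, a child of size below $N_{\min}$ gives $+\infty$. Hence restricting the inner minimum to $\mathcal A(\mathcal D_s,d)$ loses nothing. Taking the minimum over the leaf option and all admissible $f$ reproduces~\eqref{eq:dp}, so by the induction hypothesis $V=W$.

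Memoization only reuses values already computed for the same state, so it does not change them. Storing the minimizing choice at each state and backtracking recovers an optimal tree together with its leaf parameters. Finally, $V(\mathcal D,0)$ equals the optimal value of [OCMT-MNL-EST].

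\textbf{Complexity.} Every state reached at depth $d$ corresponds to a path of $d$ splits, each with an outcome bit. The subset it defines depends only on the unordered set of $d$ distinct features together with their bits; repeating a feature is redundant, since it either leaves the subset unchanged or empties it. So there are at most $\binom{|\mathcal F'|}{d}2^d$ distinct subsets at depth $d$, and summing over $d$ gives the state count. I assume the memo key is the subset and depth, possibly hashed as a sorted index list.

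At each state the algorithm does two things:
\begin{itemize}
\item It evaluates $H(\mathcal D_s)$ at most once, using at most $\overline m_{\textup{Newt}}(N,\varepsilon)$ iterations. Each iteration costs $O(NN_0^2)$ to form the MNL probabilities, gradient and Hessian over at most $N$ observations, plus $O(m_\Theta N_0)$ for constraint and feasibility checks in the line search and bound, plus $\mathcal T_{\textup{QP}}$ for the QP solve.
\item It partitions the subset under every $f\in\mathcal F'$, costing $O(|\mathcal F'|N)$.
\end{itemize}
The Fenchel-bound computations reuse the same quantities and so add only a constant factor. Multiplying the per-state cost by the number of states yields the stated bound.

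\textbf{Main obstacle.} The delicate step is reconciling pruning with exactness. I will argue that each pruning rule, whether it skips a leaf fit or stops one early, fires only when a certified lower bound on a candidate's value is at least the current incumbent. The certified bounds come from Propositions~\ref{prop:newton_linear_bound} and~\ref{prop:cross_set_propagation}, which I take as given since they are stated in the paper before the algorithm is presented. Discarded candidates therefore cannot be strictly better than the incumbent, and the minimum in~\eqref{eq:dp} is unchanged. Pruning also only reduces work, so the worst-case bound still holds. A secondary subtlety is that stopped fits resume from their stored solver state, so each leaf still incurs at most $\overline m_{\textup{Newt}}$ iterations in total.
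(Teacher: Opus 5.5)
Your proof is correct and follows the paper's own argument. For optimality, it uses backward induction on remaining depth, additivity of the penalized loss across disjoint subtrees, and memoization. For complexity, it bounds the distinct subsets at depth $d$ by $\binom{|\mathcal F'|}{d}2^d$ and multiplies by the per-state cost of one leaf fit and $|\mathcal F'|$ partitions. The subset count holds because a repeated split would create an empty child, which is inadmissible when $N_{\min}\ge 1$.

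Algorithm~\ref{alg:dp_compact_final} is the unpruned recursion, so your pruning discussion is not needed here. If you keep it, note that ``pruning only reduces work'' is not automatic for the capped variant. There a state can be reopened under different caps, and cross-set propagation adds bookkeeping that the stated bound does not cover.
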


The first factor bounds the number of candidate subsets, while the second accounts for constrained MNL fitting and split evaluation on each subset. Memoization solves each state $(\mathcal D_s,d)$ once, and backtracking the cached actions recovers the tree. Appendix~\ref{app:dp} gives the proof. Algorithm~\ref{alg:dp_compact_final} gives the unpruned recursion; Appendix~\ref{app:algorithms} gives its incumbent-capped counterpart.

\begin{algorithm}[!t]
\caption{Unpruned memoized DP for solving \textup{[OCMT-MNL-EST]}}
\label{alg:dp_compact_final}
\vskip2pt
{\setstretch{1}\footnotesize
\begin{algorithmic}[1]
\Require Dataset $\mathcal{D}$, candidate split set $\mathcal{F}'$, max depth $D$, minimum leaf size $N_{\min}$, penalty $\lambda$, and parameter set $\Theta$
\Statex
\Ensure Optimal tree $\mathbb{T}^*$ with leaf parameters $\{\bm{\theta}^*_k\}$
\Statex
\Function{SolveState}{$\mathcal{D}_s,d$}
\If{$(\mathcal{D}_s,d)$ is cached} \Return cached value
\EndIf
\If{$|\mathcal{D}_s|<N_{\min}$} cache $+\infty$; \Return $+\infty$
\EndIf
\State Compute or retrieve leaf cost $H(\mathcal{D}_s)$ and parameter $\bm{\theta}^*(\mathcal{D}_s)$
\State $v^*\leftarrow H(\mathcal{D}_s)$;\quad $a^*\leftarrow\textsf{Leaf}$
\If{$d<D$}
\ForAll{$f\in\mathcal{A}(\mathcal{D}_s,d)$}
\State $v_f\leftarrow\Call{SolveState}{\mathcal{D}_s^{f=0},d+1}+\Call{SolveState}{\mathcal{D}_s^{f=1},d+1}$
\If{$v_f<v^*$}
\State $v^*\leftarrow v_f$;\quad $a^*\leftarrow f$
\EndIf
\EndFor
\EndIf
\State Cache $(v^*,a^*)$ for $(\mathcal{D}_s,d)$ and the leaf fit for $\mathcal S$
\State \Return $v^*$
\EndFunction
\Statex
\State \Call{SolveState}{$\mathcal{D},0$}
\State Recover $(\mathbb{T}^*,\{\bm{\theta}^*_k\})$ by backtracking cached actions from root $(\mathcal{D},0)$
\end{algorithmic}
}
\end{algorithm}

\subsection{Branch-and-Bound Pruning}\label{sec:acceleration}

Lower bounds are central to exact tree search. Mixed-integer formulations use relaxations for optimal classification trees \citep{BertsimasDunn2017,Aghaeietal2025}, while DL8.5 and MurTree combine branch-and-bound with dynamic programming caches \citep{Aglinetal2020,Demirovicetal2022}; recent dynamic programs also accommodate continuous thresholds \citep{Britaetal2025}. These methods benefit from terminal losses obtained directly from class counts. In OCMT-MNL, by contrast, every candidate leaf requires constrained MNL estimation. For our search, bounds should become available before a leaf fit is complete. If a bound does not prune, fitting should continue from the same solver state without repeating the numerical work at that checkpoint. Bounds should also transfer across related leaves, allowing a candidate leaf to be pruned before its first MNL evaluation.

\pagebreak[3]
Related techniques use dual solutions of continuous subproblems in outer approximation \citep{CoeyLubinVielma2020} and iterate-based bounds or primal--dual gaps in statistical optimization \citep{Kohetal2007,Ndiayeetal2017}. Our Bellman recursion generates related MNL problems on different observation sets. We extract a bound from an intermediate constrained Newton iterate, continue the same path when pruning fails, and propagate its dual information to leaves whose fitting has not begun.

\subsubsection{Fenchel Leaf Bounds}

We begin with a single leaf. Let $\bm e_j$ be the $j$th coordinate vector in $\mathbb R^{|\mathcal J|}$. For observation~$i$ and product $j\in\mathcal J$, define $\bm a_{ij}:=(\bm e_j,\bm x_i,-p_{ij}\bm e_j)\in\mathbb R^{N_0}$, so that $u_j(\bm x_i,\bm p_i,\bm\theta)=\bm a_{ij}^{\top}\bm\theta$; set $\bm a_{i0}=\bm 0$ for the outside option. Throughout this subsection, unqualified sums over~$j$ include the outside option, and $\Delta_{\mathcal J}$ denotes the simplex defined in Section~\ref{sec:mob}. The convex conjugate of log-sum-exp gives, for any $\bm q_i\in\Delta_{\mathcal J}$,
\[
\log\sum_{j\in\{0\}\cup\mathcal J}\exp(\bm a_{ij}^{\top}\bm\theta)
\ge
\sum_{j\in\{0\}\cup\mathcal J}q_{ij}\bm a_{ij}^{\top}\bm\theta
-
\sum_{j\in\{0\}\cup\mathcal J}q_{ij}\log q_{ij}.
\]
Fenchel duality therefore supplies a family of affine lower supports indexed by auxiliary probability vectors~$\bm q_i$. For a subset $\mathcal D_s$ with index set~$\mathcal S$, define the associated score residual and entropy
\[
r_{\mathcal S}(\bm q)
=
\sum_{i\in\mathcal S}\sum_{j\in\{0\}\cup\mathcal J}
\bigl(q_{ij}-\mathbb I\{y_i=j\}\bigr)\bm a_{ij},
\qquad
\mathcal H_{\mathcal S}(\bm q)
=
-\sum_{i\in\mathcal S}\sum_{j\in\{0\}\cup\mathcal J}q_{ij}\log q_{ij}.
\]
We use the convention $0\log0=0$. Write
$\mathcal L_{\mathcal S}(\bm\theta):=\sum_{i\in\mathcal S}\ell_i(\bm\theta)$
and
$\bm\pi(\bm\theta):=\{\bm\pi_i(\bm\theta)\}_{i\in\mathcal S}$.

\noindent
Call $(\bm q,\bm\mu)$ \emph{dual feasible} on~$\mathcal S$ if
$\bm q_i\in\Delta_{\mathcal J}$ for all $i\in\mathcal S$,
$\bm\mu\ge\bm0$, and
\[
r_{\mathcal S}(\bm q)+\bm A_\Theta^\top\bm\mu=\bm0.
\]
For later use, denote the corresponding unpenalized dual value by
\[
\mathcal B_{\mathcal S}(\bm q,\bm\mu)
:=
\mathcal H_{\mathcal S}(\bm q)-\bm b_\Theta^\top\bm\mu.
\]

\begin{lemma}[Constrained Fenchel bound]\label{lem:fenchel_leaf_bound}
Every dual-feasible pair on~$\mathcal S$ satisfies
\[
H(\mathcal D_s)
\ge
\lambda N_0+\mathcal B_{\mathcal S}(\bm q,\bm\mu).
\]
\end{lemma}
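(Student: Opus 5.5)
We prove the bound by weak duality: lower-bound the leaf negative log-likelihood pointwise with the Fenchel inequality, then use dual feasibility and the constraints of $\Theta$ to remove all dependence on $\bm\theta$. The case $|\mathcal D_s|<N_{\min}$ is trivial because $H(\mathcal D_s)=+\infty$, so assume $|\mathcal D_s|\ge N_{\min}$. Then $H(\mathcal D_s)=\lambda N_0+\min_{\bm\theta\in\Theta}\mathcal L_{\mathcal S}(\bm\theta)$, and it suffices to show $\mathcal L_{\mathcal S}(\bm\theta)\ge\mathcal B_{\mathcal S}(\bm q,\bm\mu)$ for every $\bm\theta\in\Theta$.

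\textbf{Step 1: rewrite the loss.} Since $\bm a_{i0}=\bm 0$, the MNL formula gives
\[
\ell_i(\bm\theta)=\log\sum_{j\in\{0\}\cup\mathcal J}\exp(\bm a_{ij}^\top\bm\theta)-\bm a_{i,y_i}^\top\bm\theta .
\]

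\textbf{Step 2: apply the Fenchel inequality.} The log-sum-exp inequality stated before the lemma holds for each $\bm q_i\in\Delta_{\mathcal J}$. It follows from the Gibbs variational principle, that is, nonnegativity of the KL divergence between $\bm q_i$ and the softmax vector $\bm\pi_i(\bm\theta)$, with the convention $0\log 0=0$. Applying it to each $i\in\mathcal S$ and summing gives
\[
\mathcal L_{\mathcal S}(\bm\theta)\ge\sum_{i\in\mathcal S}\sum_{j}\bigl(q_{ij}-\mathbb I\{y_i=j\}\bigr)\bm a_{ij}^\top\bm\theta+\mathcal H_{\mathcal S}(\bm q)=r_{\mathcal S}(\bm q)^\top\bm\theta+\mathcal H_{\mathcal S}(\bm q).
\]

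\textbf{Step 3: use dual feasibility and $\Theta$.} Dual feasibility gives $r_{\mathcal S}(\bm q)=-\bm A_\Theta^\top\bm\mu$, so $r_{\mathcal S}(\bm q)^\top\bm\theta=-\bm\mu^\top\bm A_\Theta\bm\theta$. Because $\bm\mu\ge\bm 0$ and $\bm A_\Theta\bm\theta\le\bm b_\Theta$ for $\bm\theta\in\Theta$, we get $-\bm\mu^\top\bm A_\Theta\bm\theta\ge-\bm\mu^\top\bm b_\Theta$. Substituting into Step 2 yields $\mathcal L_{\mathcal S}(\bm\theta)\ge\mathcal H_{\mathcal S}(\bm q)-\bm b_\Theta^\top\bm\mu=\mathcal B_{\mathcal S}(\bm q,\bm\mu)$ for all $\bm\theta\in\Theta$. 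Taking the minimum over $\Theta$ and adding $\lambda N_0$ gives the claim. The argument does not use attainment of the minimum; an infimum suffices.

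\textbf{Main obstacle.} The only delicate step is verifying the log-sum-exp conjugate inequality when $\bm q_i$ has zero entries. Write $Z_i=\sum_j e^{\bm a_{ij}^\top\bm\theta}$. Restricting sums to $\operatorname{supp}(\bm q_i)$ and applying Jensen's inequality to the concave logarithm gives
\[
\sum_j q_{ij}\bigl(\bm a_{ij}^\top\bm\theta-\log q_{ij}\bigr)=\sum_{j\in\operatorname{supp}(\bm q_i)}q_{ij}\log\frac{e^{\bm a_{ij}^\top\bm\theta}}{q_{ij}}\le\log\sum_{j\in\operatorname{supp}(\bm q_i)}e^{\bm a_{ij}^\top\bm\theta}\le\log Z_i .
\]
The remaining steps are routine algebra.
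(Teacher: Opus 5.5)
Your proof is correct and matches the paper's own proof essentially step for step. Both apply the log-sum-exp Fenchel--Young inequality observation by observation and sum to get $\mathcal L_{\mathcal S}(\bm\theta)\ge\mathcal H_{\mathcal S}(\bm q)+r_{\mathcal S}(\bm q)^\top\bm\theta$, then use $r_{\mathcal S}(\bm q)=-\bm A_\Theta^\top\bm\mu$, $\bm\mu\ge\bm 0$, and $\bm A_\Theta\bm\theta\le\bm b_\Theta$ to eliminate $\bm\theta$. Your extra checks (the trivial $+\infty$ case and the Jensen argument for $\bm q_i$ with zero entries) are details the paper leaves implicit.
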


A dual-feasible pair bounds the leaf cost without requiring a completed fit.
For pruning, this information may suffice to rule out the leaf before its
optimal parameters are known.
For such a pair, the Kullback--Leibler (KL) identity and the
defining residual condition give
\begin{equation}\label{eq:constrained_gap_identity}
\mathcal L_{\mathcal S}(\bm\theta)
-\mathcal B_{\mathcal S}(\bm q,\bm\mu)
=
\sum_{i\in\mathcal S}\mathrm{KL}\!\left(\bm q_i\,\|\,\bm\pi_i(\bm\theta)\right)
+
\bm\mu^\top(\bm b_\Theta-\bm A_\Theta\bm\theta),
\qquad \bm\theta\in\Theta.
\end{equation}
Both terms are nonnegative. The gap separates disagreement with the MNL
probabilities from slack in the parameter constraints. It vanishes when the
probabilities agree and the multipliers are complementary. At an attained leaf
optimum, its MNL probabilities and KKT multipliers meet these conditions.
Consequently,
\[
H(\mathcal D_s)
=
\lambda N_0+
\max_{\substack{
\bm q_i\in\Delta_{\mathcal J},\ i\in\mathcal S;\ \bm\mu\ge\bm0\\
r_{\mathcal S}(\bm q)+\bm A_\Theta^\top\bm\mu=\bm0}}
\left\{
\mathcal H_{\mathcal S}(\bm q)-\bm b_\Theta^\top\bm\mu
\right\}.
\]
Maximizing over the dual-feasible pairs therefore recovers the exact leaf
value and requires work comparable to fitting the leaf itself.

The same Fenchel inequality also gives the standard first-order support bound.
At a leaf-fitting iterate~$\bm\theta_t$, choosing
$\bm q=\bm\pi(\bm\theta_t)$ yields
\begin{equation}\label{eq:first_order_bound}
\mathrm{FO}_t(\mathcal D_s)
:=
\lambda N_0+\mathcal L_{\mathcal S}(\bm\theta_t)
+
\inf_{\bm\theta\in\Theta}
\nabla\mathcal L_{\mathcal S}(\bm\theta_t)^{\top}
(\bm\theta-\bm\theta_t).
\end{equation}
Although exact at the leaf optimum, it requires a linear optimization over
$\Theta$ at every checkpoint. A step of constrained Newton supplies the needed
dual information without a separate linear optimization.

\begin{proposition}[Closed-form Fenchel bound]\label{prop:newton_linear_bound}
At an iterate~$\bm\theta_t$, let
$\pi_{ij}^t:=\pi_{ij}(\bm\theta_t)$,
$\bar{\bm a}_i^t:=\sum_j\pi_{ij}^t\bm a_{ij}$, and let
$(\bm d_t,\bm\mu_t)$ be a primal--dual solution of the constrained Newton
subproblem~\eqref{eq:constrained_newton_qp}. Define
\begin{equation}\label{eq:newton_linear_q}
q_{ij}^t
:=
\pi_{ij}^t
\left[
1+
(\bm a_{ij}-\bar{\bm a}_i^t)^{\top}\bm d_t
\right].
\end{equation}
If $\bm q^t\ge\bm0$, then
\[
\mathrm{LB}_t(\mathcal D_s)
:=
\lambda N_0+\mathcal H_{\mathcal S}(\bm q^t)
-\bm b_\Theta^\top\bm\mu_t
\le
H(\mathcal D_s).
\]
\end{proposition}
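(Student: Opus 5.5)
The approach is to reduce Proposition~\ref{prop:newton_linear_bound} to Lemma~\ref{lem:fenchel_leaf_bound}. It suffices to show that $(\bm q^t,\bm\mu_t)$ is dual feasible on~$\mathcal S$. The bound $\mathrm{LB}_t(\mathcal D_s)=\lambda N_0+\mathcal B_{\mathcal S}(\bm q^t,\bm\mu_t)\le H(\mathcal D_s)$ then follows immediately. Dual feasibility has three parts: each $\bm q_i^t\in\Delta_{\mathcal J}$, $\bm\mu_t\ge\bm 0$, and $r_{\mathcal S}(\bm q^t)+\bm A_\Theta^\top\bm\mu_t=\bm 0$. Nonnegativity of $\bm\mu_t$ holds because $\bm\mu_t$ is the KKT multiplier of the inequality constraints in~\eqref{eq:constrained_newton_qp}. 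Nonnegativity of $\bm q^t$ is the stated hypothesis.

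\textbf{Normalization.} I will check $\sum_j q_{ij}^t=1$ directly:
\[
\sum_j q^t_{ij}=\sum_j\pi^t_{ij}+\Bigl(\sum_j\pi^t_{ij}\bm a_{ij}-\bar{\bm a}^t_i\Bigr)^{\top}\bm d_t=1+0,
\]
using $\sum_j\pi^t_{ij}=1$ and the definition of $\bar{\bm a}_i^t$. Together with $\bm q^t\ge\bm 0$, this gives $\bm q_i^t\in\Delta_{\mathcal J}$.

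\textbf{Residual condition.} This is the substantive step. I will use the standard MNL derivative formulas. Since $\ell_i(\bm\theta)=\log\sum_j\exp(\bm a_{ij}^\top\bm\theta)-\bm a_{i,y_i}^\top\bm\theta$, the gradient and Hessian are
\[
\bm g_t=\sum_i\sum_j(\pi^t_{ij}-\mathbb I\{y_i=j\})\bm a_{ij},
\qquad
\bm G_t=\sum_i\sum_j\pi^t_{ij}(\bm a_{ij}-\bar{\bm a}^t_i)(\bm a_{ij}-\bar{\bm a}^t_i)^\top .
\]
Now expand the residual:
\[
r_{\mathcal S}(\bm q^t)=\sum_i\sum_j(\pi^t_{ij}-\mathbb I\{y_i=j\})\bm a_{ij}+\sum_i\sum_j\pi^t_{ij}\bm a_{ij}(\bm a_{ij}-\bar{\bm a}^t_i)^\top\bm d_t .
\]
The first sum is $\bm g_t$. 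In the second sum, we may replace $\bm a_{ij}$ by $\bm a_{ij}-\bar{\bm a}^t_i$, since $\sum_j\pi^t_{ij}(\bm a_{ij}-\bar{\bm a}^t_i)=\bm 0$. The second sum therefore equals $\bm G_t\bm d_t$, so $r_{\mathcal S}(\bm q^t)=\bm g_t+\bm G_t\bm d_t$.

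The QP~\eqref{eq:constrained_newton_qp} is convex with linear constraints, so its KKT stationarity condition reads $\bm g_t+\bm G_t\bm d_t+\bm A_\Theta^\top\bm\mu_t=\bm 0$. This is exactly the required residual identity.

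The only delicate point is the Hessian identity, which needs the centering trick above. One should also note that the primal--dual pair returned by the solver is assumed to satisfy KKT exactly. The role of the hypothesis $\bm q^t\ge\bm0$ is simply to ensure $\bm q^t$ lies in the simplex, so that the entropy term and Lemma~\ref{lem:fenchel_leaf_bound} apply.
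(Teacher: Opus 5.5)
Your proposal is correct and follows essentially the same route as the paper. You verify normalization of $\bm q_i^t$ and derive $r_{\mathcal S}(\bm q^t)=\bm g_t+\bm G_t\bm d_t$, then use QP stationarity to obtain dual feasibility of $(\bm q^t,\bm\mu_t)$ before invoking Lemma~\ref{lem:fenchel_leaf_bound}. The paper writes the Hessian step as $\bm q_i^t-\bm\pi_i^t=\bm S_i^t\bm A_i\bm d_t$ with $\bm S_i^t=\operatorname{Diag}(\bm\pi_i^t)-\bm\pi_i^t(\bm\pi_i^t)^\top$ rather than your centering identity. It also remarks that complementarity at $\bm\theta_t+\bm d_t$ makes $\inf_{\bm\theta\in\Theta}r_{\mathcal S}(\bm q^t)^\top\bm\theta=-\bm b_\Theta^\top\bm\mu_t$ exact, which the inequality itself does not need.
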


The vector~$\bm q^t$ is the first-order approximation to the choice probabilities
at~$\bm\theta_t+\bm d_t$. The QP stationarity condition balances its residual
with~$\bm A_\Theta^\top\bm\mu_t$, while the construction preserves each
probability sum. Nonnegativity is therefore all that remains to ensure dual
feasibility. The bound uses the same MNL context, direction, and multipliers as
the fit, with no auxiliary optimization.

Since $\pi_{ij}^t>0$,
\[
\bm q^t\ge\bm0
\quad\Longleftrightarrow\quad
1+(\bm a_{ij}-\bar{\bm a}_i^t)^{\top}\bm d_t\ge0,
\qquad \forall i,j .
\]
Let
\[
\Delta_{\mathcal S}
:=
\max_{i\in\mathcal S}
\max_{j,k\in\{0\}\cup\mathcal J}
\|\bm a_{ij}-\bm a_{ik}\|_2.
\]
Because $\bar{\bm a}_i^t$ is a convex combination of the alternative vectors,
\begin{equation}\label{eq:finite_feasibility}
\Delta_{\mathcal S}\|\bm d_t\|_2\le1
\quad\Longrightarrow\quad
\bm q^t\ge\bm0.
\end{equation}
As the Newton directions shrink, this condition becomes progressively easier
to satisfy and eventually holds if $\bm d_t\to\bm0$. At an infeasible
checkpoint, the bound is not evaluated and constrained Newton advances; no
repair problem is solved.

Conditional on $\bm q^t\ge\bm0$, combining
\eqref{eq:constrained_gap_identity} with complementarity for
\eqref{eq:constrained_newton_qp} gives the exact checkpoint gap
\begin{equation}\label{eq:pathwise_gap}
\lambda N_0+\mathcal L_{\mathcal S}(\bm\theta_t)
-\mathrm{LB}_t(\mathcal D_s)
=
\sum_{i\in\mathcal S}
\mathrm{KL}\!\left(\bm q_i^t\,\|\,\bm\pi_i^t\right)
-r_{\mathcal S}(\bm q^t)^\top\bm d_t.
\end{equation}
Because~$\bm q^t$ is the first-order probability change along~$\bm d_t$, its KL term is $\frac12\bm d_t^\top\bm G_t\bm d_t+O(\|\bm d_t\|^3)$. Hence the right-hand side of~\eqref{eq:pathwise_gap} equals
\[
-\left(
\bm g_t^\top\bm d_t+\frac12\bm d_t^\top\bm G_t\bm d_t
\right)
+O(\!\|\bm d_t\|^3),
\]
the decrease predicted by the constrained Newton model, up to third order. Thus the bound uses the same local geometry that drives exact fitting.

The next result quantifies the strength of this construction. Let
$\bm\theta_{\mathcal S}^*$ be a finite leaf optimizer and write
$\bm G^*=\nabla^2\mathcal L_{\mathcal S}(\bm\theta_{\mathcal S}^*)$. Let
$\bm A_i$ collect the row vectors~$\bm a_{ij}^\top$. For every finite
$\bm\theta$, the MNL Hessian has the same null space
$\mathcal N_{\mathcal S}:=\cap_{i\in\mathcal S}\operatorname{Null}(\bm A_i)$.
Let $\mathcal I^*$ index the constraints active at
$\bm\theta_{\mathcal S}^*$ and decompose the tangent space of this face into
its MNL-invariant and identifiable directions:
\[
\mathcal T_{\mathcal S}^*
:=
\operatorname{Null}(\bm A_{\Theta,\mathcal I^*}),
\qquad
\mathcal K_{\mathcal S}^*
:=
\mathcal T_{\mathcal S}^*\cap\mathcal N_{\mathcal S},
\qquad
\mathcal V_{\mathcal S}^*
:=
\mathcal T_{\mathcal S}^*\cap(\mathcal K_{\mathcal S}^*)^\perp.
\]
The active face is \emph{locally stable} along the selected constrained Newton
path if, whenever an iterate on this face is sufficiently close to
$\bm\theta_{\mathcal S}^*$, its selected QP solution satisfies
$\bm A_{\Theta,\mathcal I^*}\bm d_t=\bm0$ and
$\mu_{t,r}=0$ for $r\notin\mathcal I^*$.
If $\mathcal V_{\mathcal S}^*=\{\bm0\}$, the local probability model has no
identifiable tangent direction and the conclusion below is immediate. We state
the nondegenerate case.

\begin{proposition}[Convergence and local tightness]\label{prop:local_accuracy}
Suppose the active face at~$\bm\theta_{\mathcal S}^*$ is locally stable. Let
$\bm Z$ have orthonormal columns spanning $\mathcal V_{\mathcal S}^*$, and
write $\bm G(\bm\theta):=\nabla^2\mathcal L_{\mathcal S}(\bm\theta)$. On a
sufficiently small convex neighborhood~$\mathcal U$ of
$\bm\theta_{\mathcal S}^*$ within this face on which~\eqref{eq:finite_feasibility}
holds, define
\[
\begin{aligned}
m_{\mathcal S}
&:=\inf_{\bm\theta\in\mathcal U}
\lambda_{\min}\!\left(\bm Z^\top\bm G(\bm\theta)\bm Z\right),
&
L_{G,\mathcal S}
&:=\sup_{\substack{\bm\theta,\bm\theta'\in\mathcal U\\
\bm\theta\ne\bm\theta'}}
\frac{\|\bm Z^\top[\bm G(\bm\theta)-\bm G(\bm\theta')]\bm Z\|_2}
{\|\bm\theta-\bm\theta'\|_2},\\
M_{\pi,\mathcal S}
&:=\max_{i\in\mathcal S}\sup_{\bm\theta\in\mathcal U}
\|D\bm\pi_i(\bm\theta)\bm Z\|_2,
&
L_{\pi,\mathcal S}
&:=\max_{i\in\mathcal S}
\sup_{\substack{\bm\theta,\bm\theta'\in\mathcal U\\
\bm\theta\ne\bm\theta'}}
\frac{\|[D\bm\pi_i(\bm\theta)-D\bm\pi_i(\bm\theta')]\bm Z\|_2}
{\|\bm\theta-\bm\theta'\|_2},
\end{aligned}
\]
and
\[
C_{\mathcal S}
:=
\frac{L_{\pi,\mathcal S}}{2m_{\mathcal S}}
+
\frac{2M_{\pi,\mathcal S}L_{G,\mathcal S}}{m_{\mathcal S}^2},
\qquad
\pi_{\min,\mathcal S}^*
:=
\min_{i\in\mathcal S,\,j\in\{0\}\cup\mathcal J}
\pi_{ij}(\bm\theta_{\mathcal S}^*).
\]
Then $m_{\mathcal S}>0$, and every constrained Newton checkpoint
$\bm\theta_t\in\mathcal U$ satisfies, with
$\nu_t^2:=\bm d_t^\top\bm G_t\bm d_t$,
\[
\max_{i\in\mathcal S}
\|\bm q_i^t-\bm\pi_i(\bm\theta_{\mathcal S}^*)\|_2
\le C_{\mathcal S}\nu_t^2.
\]
Moreover,
\[
0\le
H(\mathcal D_s)-\mathrm{LB}_t(\mathcal D_s)
=
\sum_{i\in\mathcal S}
\mathrm{KL}\!\left(
\bm q_i^t\,\middle\|\,\bm\pi_i(\bm\theta_{\mathcal S}^*)
\right)
\le
\frac{|\mathcal S|C_{\mathcal S}^2}{\pi_{\min,\mathcal S}^*}
\nu_t^4.
\]
Consequently, $\mathrm{LB}_t(\mathcal D_s)\to H(\mathcal D_s)$ as
$\bm\theta_t\to\bm\theta_{\mathcal S}^*$ within~$\mathcal U$.
\end{proposition}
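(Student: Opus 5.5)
The plan is to work entirely inside the active face and reduce everything to the identifiable coordinates $\bm Z^\top(\cdot)$. Directions in $\mathcal K_{\mathcal S}^*$ lie in $\mathcal N_{\mathcal S}$, so they change no $\bm a_{ij}^\top\bm\theta$ differences within an observation and leave every $\bm\pi_i$ unchanged.

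\textbf{Step 1: $m_{\mathcal S}>0$.} At $\bm\theta_{\mathcal S}^*$ the MNL Hessian is positive semidefinite with null space exactly $\mathcal N_{\mathcal S}$. Since $\mathcal V_{\mathcal S}^*\perp\mathcal K_{\mathcal S}^*$ and $\mathcal V_{\mathcal S}^*\subseteq\mathcal T_{\mathcal S}^*$, we have $\mathcal V_{\mathcal S}^*\cap\mathcal N_{\mathcal S}=\{\bm0\}$, so $\bm Z^\top\bm G^*\bm Z\succ0$. Continuity of $\bm G$ then gives a uniform positive lower bound on a small enough $\mathcal U$.

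\textbf{Step 2: reduction of the QP.} Local stability means that, for $\bm\theta_t\in\mathcal U$, the QP~\eqref{eq:constrained_newton_qp} reduces to minimizing $\bm g_t^\top\bm d+\tfrac12\bm d^\top\bm G_t\bm d$ subject to $\bm A_{\Theta,\mathcal I^*}\bm d=\bm0$, with multipliers supported on $\mathcal I^*$. Writing $\bm d_t=\bm Z\bm v_t+\bm k_t$ with $\bm k_t\in\mathcal K_{\mathcal S}^*$, stationarity gives
\[
\bm Z^\top\bm G_t\bm Z\,\bm v_t=-\bm Z^\top\bm g_t .
\]
At the optimum, KKT gives $\bm g^*=-\bm A_{\Theta,\mathcal I^*}^\top\bm\mu^*$, hence $\bm Z^\top\bm g^*=\bm0$. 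Set $\bm e_t:=\bm Z^\top(\bm\theta_{\mathcal S}^*-\bm\theta_t)$. The standard Newton-error argument, via the integral form of $\bm g^*-\bm g_t$ restricted to the face, together with $L_{G,\mathcal S}$ and $m_{\mathcal S}$, yields
\[
\|\bm v_t-\bm e_t\|\le\frac{L_{G,\mathcal S}}{2m_{\mathcal S}}\|\bm e_t\|^2 .
\]
It also gives $\|\bm e_t\|\le2\|\bm v_t\|$ once $\mathcal U$ is small. Finally, $\nu_t^2=\bm v_t^\top\bm Z^\top\bm G_t\bm Z\bm v_t\ge m_{\mathcal S}\|\bm v_t\|^2$, so $\|\bm v_t\|^2\le\nu_t^2/m_{\mathcal S}$.

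\textbf{Step 3: probability bound.} By~\eqref{eq:newton_linear_q}, $\bm q_i^t=\bm\pi_i(\bm\theta_t)+D\bm\pi_i(\bm\theta_t)\bm d_t=\bm\pi_i^t+D\bm\pi_i(\bm\theta_t)\bm Z\bm v_t$. A first-order Taylor expansion along the face, in which only $\bm Z$-components matter, gives
\[
\bm\pi_i(\bm\theta_{\mathcal S}^*)=\bm\pi_i^t+D\bm\pi_i(\bm\theta_t)\bm Z\bm e_t+\bm\rho_i,
\qquad
\|\bm\rho_i\|\le\tfrac12L_{\pi,\mathcal S}\|\bm e_t\|^2 .
\]
Subtracting,
\[
\|\bm q_i^t-\bm\pi_i(\bm\theta_{\mathcal S}^*)\|\le M_{\pi,\mathcal S}\|\bm v_t-\bm e_t\|+\|\bm\rho_i\|,
\]
and both terms are $O(\|\bm e_t\|^2)=O(\nu_t^2/m_{\mathcal S})$. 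This gives the $C_{\mathcal S}\nu_t^2$ form; I expect the exact constants to come from how $\|\bm e_t\|$ is bounded by $\|\bm v_t\|$.

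\textbf{Step 4: gap identity and bound.} Since $\bm q^t\ge\bm0$ on $\mathcal U$ by~\eqref{eq:finite_feasibility}, the pair $(\bm q^t,\bm\mu_t)$ is dual feasible (Proposition~\ref{prop:newton_linear_bound}). Apply~\eqref{eq:constrained_gap_identity} at $\bm\theta=\bm\theta_{\mathcal S}^*$. The slack term $\bm\mu_t^\top(\bm b_\Theta-\bm A_\Theta\bm\theta_{\mathcal S}^*)$ vanishes because $\bm\mu_t$ is supported on $\mathcal I^*$, where the constraints are tight at $\bm\theta_{\mathcal S}^*$. This gives the stated equality $H-\mathrm{LB}_t=\sum_i\mathrm{KL}(\bm q_i^t\|\bm\pi_i^*)\ge0$. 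Bounding KL by the $\chi^2$-divergence,
\[
\mathrm{KL}(\bm q\|\bm\pi)\le\sum_j\frac{(q_j-\pi_j)^2}{\pi_j}\le\frac{\|\bm q-\bm\pi\|_2^2}{\pi_{\min,\mathcal S}^*},
\]
and summing the Step 3 bound over $i\in\mathcal S$ gives the $\nu_t^4$ estimate. Convergence follows because $\nu_t\to0$ as $\bm\theta_t\to\bm\theta_{\mathcal S}^*$ by Step 2.

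\textbf{Main obstacle.} The delicate step is Step 2 on a possibly non-identified face. One must check that the invariant component $\bm k_t$ truly drops out of both $\bm q^t$ and $\nu_t$, which holds since $\bm G_t\bm k_t=\bm0$ and $D\bm\pi_i\bm k_t=\bm0$. One must also justify the Newton-error estimate using only $\bm Z$-restricted Lipschitz constants, and obtain the precise constant $C_{\mathcal S}$, by measuring distances through $\bm Z^\top$ throughout.
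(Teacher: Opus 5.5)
Your Steps~1, 2 and~4 follow the paper's proof closely: the null-space argument for $m_{\mathcal S}>0$, and the projected Newton equation with $\|\bm e_t\|_2\le2\|\bm v_t\|_2$ and $\nu_t^2\ge m_{\mathcal S}\|\bm v_t\|_2^2$. Step~4 likewise matches: the gap identity~\eqref{eq:constrained_gap_identity} at $\bm\theta_{\mathcal S}^*$ with the multiplier term removed by local stability, then $\mathrm{KL}\le\chi^2\le\|\cdot\|_2^2/\pi^*_{\min,\mathcal S}$. The gap is in Step~3, exactly at the constant you left open. You Taylor-expand $\bm\pi_i$ from $\bm\theta_t$ all the way to $\bm\theta_{\mathcal S}^*$, so the remainder is quadratic in $\|\bm e_t\|_2$. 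The factor-2 comparison from Step~2 then turns $\tfrac12L_{\pi,\mathcal S}\|\bm e_t\|_2^2$ into $2L_{\pi,\mathcal S}\nu_t^2/m_{\mathcal S}$. Your argument therefore proves the bound with constant $2L_{\pi,\mathcal S}/m_{\mathcal S}+2M_{\pi,\mathcal S}L_{G,\mathcal S}/m_{\mathcal S}^2$, whose first term is four times that of $C_{\mathcal S}$. So the stated inequality is not established as written, and neither is the constant $|\mathcal S|C_{\mathcal S}^2/\pi^*_{\min,\mathcal S}$ in the $\nu_t^4$ bound.

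The missing idea is to pivot at the Newton endpoint rather than at the optimum. With $\bm\pi_i^*:=\bm\pi_i(\bm\theta_{\mathcal S}^*)$, write
\[
\bm q_i^t-\bm\pi_i^*
=
\bigl\{\bm q_i^t-\bm\pi_i(\bm\theta_t+\bm Z\bm v_t)\bigr\}
+
\bigl\{\bm\pi_i(\bm\theta_t+\bm Z\bm v_t)-\bm\pi_i^*\bigr\}.
\]
Because $\bm q_i^t$ is the linearization of $\bm\pi_i$ along the step itself, the first brace is a Taylor remainder in $\|\bm v_t\|_2$. It is at most $\tfrac12L_{\pi,\mathcal S}\|\bm v_t\|_2^2\le L_{\pi,\mathcal S}\nu_t^2/(2m_{\mathcal S})$, with no loss from comparing $\bm e_t$ to $\bm v_t$. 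The second brace is at most $M_{\pi,\mathcal S}\|\bm v_t-\bm e_t\|_2\le 2M_{\pi,\mathcal S}L_{G,\mathcal S}\nu_t^2/m_{\mathcal S}^2$ by your own Step~2 estimates. The sum is exactly $C_{\mathcal S}\nu_t^2$; this is the paper's route.

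This also eases the obstacle you flagged. The step $\bm Z\bm v_t$ lies in $\mathcal V_{\mathcal S}^*$, so the full-norm denominator in $L_{\pi,\mathcal S}$ causes no trouble. Your path from $\bm\theta_t$ to $\bm\theta_{\mathcal S}^*$, by contrast, carries a $\mathcal K_{\mathcal S}^*$-component and naturally yields only $\tfrac12L_{\pi,\mathcal S}\|\bm\theta_{\mathcal S}^*-\bm\theta_t\|_2\|\bm e_t\|_2$. You could instead try to rescue your decomposition by adjusting the $\|\bm e_t\|_2$-to-$\|\bm v_t\|_2$ comparison, as you suggest. That would require pushing the factor~2 close to~1 by shrinking $\mathcal U$ further, then absorbing the excess into slack in the second term, which is workable but fiddly.
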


Near the leaf optimum on the locally stable face, the Newton construction leaves
a probability error of order~$\nu_t^2$. The multiplier term
in~\eqref{eq:constrained_gap_identity}
vanishes at the optimum, and KL divergence is locally quadratic in this
probability error, giving a bound gap of order~$\nu_t^4$. The gap thus closes
along the same path that completes the fit. Before this local regime is reached,
\eqref{eq:finite_feasibility} gives a sufficient condition for accepting the
bound.

Because the negative log-likelihood is nonnegative, $\lambda N_0$ is always a
valid leaf lower bound. Along a partial leaf solve, we retain the strongest
feasible bound encountered on the constrained Newton path:
\[
\underline H_t(\mathcal D_s)
=
\max\left\{
\lambda N_0,\;
\max_{0\le r\le t:\ \bm q^r\in\Delta_{\mathcal J}^{|\mathcal S|}}
\left\{
\lambda N_0+\mathcal H_{\mathcal S}(\bm q^r)
-\bm b_\Theta^\top\bm\mu_r
\right\}
\right\},
\]
which is monotone in~$t$ and always satisfies $\underline H_t(\mathcal D_s)\le H(\mathcal D_s)$.
Here the inner maximum is $-\infty$ if no feasible $\bm q^r$ has yet been generated.

At checkpoint~$t$, the algorithm compares $\underline H_t(\mathcal D_s)$ with
the incumbent cap~$U$. If the bound reaches~$U$, the current leaf call returns
$\textsf{PRUNED}$. If it
does not, the algorithm takes the already computed constrained Newton step and
continues from the resulting iterate. No context is recomputed, so the contexts
visited by the interrupted solve form a prefix of those generated by an
uninterrupted exact fit. We refer to this reuse as \emph{pathwise continuation}.

\subsubsection{Cross-Set Propagation}\label{sec:bound_prop}

Pathwise continuation avoids restarting a leaf after its first evaluation, but
it cannot remove that initial evaluation. Cross-set propagation addresses this
remaining cost by using dual-feasible pairs from subsets to bound a target leaf
before its own optimization begins. The next result gives two forms of
propagation: inheritance from one subset and aggregation across several
disjoint subsets.

\begin{proposition}[Cross-set propagation of Fenchel bounds]\label{prop:cross_set_propagation}
Let $\mathcal S$ be a target index set and
$\bm y_i=(\mathbb I\{y_i=j\})_j$.

\emph{(i) Superset inheritance.} If $\mathcal E\subseteq\mathcal S$ has a
dual-feasible pair $(\bm q^{\mathcal E},\bm\mu^{\mathcal E})$, set
$\widetilde{\bm q}_i=\bm q_i^{\mathcal E}$ for $i\in\mathcal E$ and
$\widetilde{\bm q}_i=\bm y_i$ otherwise. Then
$(\widetilde{\bm q},\bm\mu^{\mathcal E})$ is dual feasible on $\mathcal S$ and
\[
\mathcal B_{\mathcal S}(\widetilde{\bm q},\bm\mu^{\mathcal E})
=
\mathcal B_{\mathcal E}(\bm q^{\mathcal E},\bm\mu^{\mathcal E}).
\]
Hence
$H(\mathcal D_s)\ge\lambda N_0+
\mathcal B_{\mathcal E}(\bm q^{\mathcal E},\bm\mu^{\mathcal E})$.

\emph{(ii) Disjoint-set aggregation.} More generally, let $\mathcal G$ be a
collection of pairwise disjoint subsets of $\mathcal S$, each with a
dual-feasible pair $(\bm q^{\mathcal E},\bm\mu^{\mathcal E})$. Set
$\widetilde{\bm q}_i=\bm q_i^{\mathcal E}$ for $i\in\mathcal E$,
$\widetilde{\bm q}_i=\bm y_i$ for observations not covered by~$\mathcal G$,
and $\widetilde{\bm\mu}=\sum_{\mathcal E\in\mathcal G}\bm\mu^{\mathcal E}$.
Then $(\widetilde{\bm q},\widetilde{\bm\mu})$ is dual feasible on $\mathcal S$
and
\[
\mathcal B_{\mathcal S}(\widetilde{\bm q},\widetilde{\bm\mu})
=
\sum_{\mathcal E\in\mathcal G}
\mathcal B_{\mathcal E}(\bm q^{\mathcal E},\bm\mu^{\mathcal E}).
\]
Consequently,
$H(\mathcal D_s)\ge\lambda N_0+
\sum_{\mathcal E\in\mathcal G}
\mathcal B_{\mathcal E}(\bm q^{\mathcal E},\bm\mu^{\mathcal E})$.
\end{proposition}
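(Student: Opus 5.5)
I will prove part (ii) directly and obtain part (i) as the special case $\mathcal G=\{\mathcal E\}$. The argument only has to check the three conditions defining dual feasibility on~$\mathcal S$, and then exploit the additivity of $r_{\mathcal S}$ and $\mathcal H_{\mathcal S}$ across observations. The final inequality then follows immediately from Lemma~\ref{lem:fenchel_leaf_bound}.

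\textbf{Step 1: Simplex membership and nonnegativity.} For $i$ in some $\mathcal E\in\mathcal G$, the vector $\widetilde{\bm q}_i=\bm q_i^{\mathcal E}$ lies in $\Delta_{\mathcal J}$ by assumption. This is well defined because the sets in $\mathcal G$ are pairwise disjoint, so each covered $i$ belongs to exactly one $\mathcal E$. For uncovered $i$, the one-hot vector $\bm y_i$ lies in $\Delta_{\mathcal J}$. Finally, $\widetilde{\bm\mu}$ is a sum of nonnegative vectors and is therefore nonnegative.

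\textbf{Step 2: The residual condition.} Since $r_{\mathcal S}$ is a sum of per-observation terms, I will split $\mathcal S$ into the disjoint union $\bigcup_{\mathcal E\in\mathcal G}\mathcal E$ together with the uncovered set $\mathcal S\setminus\bigcup\mathcal G$. For each uncovered $i$, the term $\sum_j(y_{ij}-\mathbb I\{y_i=j\})\bm a_{ij}$ vanishes identically. Hence
\[
r_{\mathcal S}(\widetilde{\bm q})=\sum_{\mathcal E\in\mathcal G}r_{\mathcal E}(\bm q^{\mathcal E}).
\]
Adding $\bm A_\Theta^\top\widetilde{\bm\mu}=\sum_{\mathcal E}\bm A_\Theta^\top\bm\mu^{\mathcal E}$ gives
\[
r_{\mathcal S}(\widetilde{\bm q})+\bm A_\Theta^\top\widetilde{\bm\mu}
=\sum_{\mathcal E}\bigl(r_{\mathcal E}(\bm q^{\mathcal E})+\bm A_\Theta^\top\bm\mu^{\mathcal E}\bigr)=\bm 0,
\]
using the dual feasibility of each pair on its own subset.

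\textbf{Step 3: The dual value.} The entropy decomposes in the same way. The convention $0\log 0=0$ together with $1\log 1=0$ gives zero entropy for each one-hot $\bm y_i$. Therefore
\[
\mathcal H_{\mathcal S}(\widetilde{\bm q})=\sum_{\mathcal E}\mathcal H_{\mathcal E}(\bm q^{\mathcal E}).
\]
The multiplier term is linear, so $\bm b_\Theta^\top\widetilde{\bm\mu}=\sum_{\mathcal E}\bm b_\Theta^\top\bm\mu^{\mathcal E}$. Subtracting these yields
\[
\mathcal B_{\mathcal S}(\widetilde{\bm q},\widetilde{\bm\mu})=\sum_{\mathcal E\in\mathcal G}\mathcal B_{\mathcal E}(\bm q^{\mathcal E},\bm\mu^{\mathcal E}).
\]
Lemma~\ref{lem:fenchel_leaf_bound}, applied on $\mathcal S$, then gives the stated lower bound on $H(\mathcal D_s)$.

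\textbf{Possible obstacles.} No step is genuinely hard; the only point needing care is bookkeeping. The sum of the multipliers is harmless precisely because the residual condition is additive, with $\bm b_\Theta$ entering linearly. If $|\mathcal S|<N_{\min}$, then $H=+\infty$ and the bound holds trivially. So only the finite case, which is the one covered by Lemma~\ref{lem:fenchel_leaf_bound}, needs the argument above.
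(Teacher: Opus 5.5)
Your proposal is correct and follows the paper's own proof essentially verbatim. The paper likewise proves part (ii) via per-observation additivity of $r_{\mathcal S}$ and $\mathcal H_{\mathcal S}$ (uncovered one-hot $\bm y_i$ contribute zero entropy and zero residual), uses $\widetilde{\bm\mu}\ge\bm 0$ and Lemma~\ref{lem:fenchel_leaf_bound}, and obtains part (i) by taking $\mathcal G=\{\mathcal E\}$.
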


The transfer relies on observation-level additivity. Setting
$\widetilde{\bm q}_i=\bm y_i$ on uncovered observations adds neither entropy nor
residual, so a subset's dual value remains valid on the larger set. On disjoint
subsets, the dual values and multipliers add, with the target leaf penalty
included only once. The source bounds may come from partial fits and need not
correspond to children in the current tree. Unlike memoization, which retrieves
the value of the same state, this transfer can bound a different leaf before its
first MNL evaluation.

\begin{figure}[!htbp]
\centering
\includegraphics[width=0.94\linewidth]{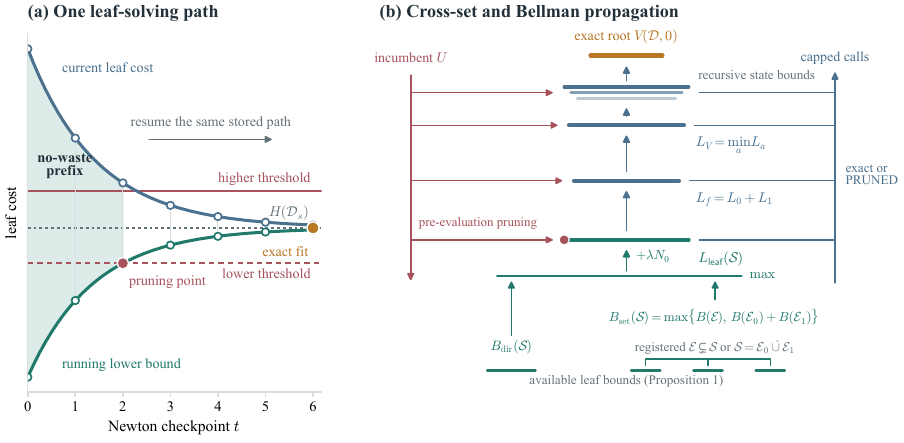}
\caption{Pruning with Fenchel bounds. (a) A direct bound check and exact fitting
share one constrained Newton path. (b) Superset inheritance and disjoint-set
aggregation provide bounds for unevaluated leaves before Bellman propagation.}
\label{fig:pathwise_bellman_pruning}
\end{figure}

To combine the available information, let $B_{\mathrm{dir}}(\mathcal S)$ be the
best unpenalized dual value generated directly on the target leaf, including
the trivial value zero. Let $\mathcal I_{\mathrm{reg}}(\mathcal S)$ contain
the registered proper subsets of~$\mathcal S$, and let
$\mathcal U_{\mathrm{reg}}(\mathcal S)$ contain the registered pairs
$(\mathcal E_0,\mathcal E_1)$ such that
$\mathcal S=\mathcal E_0\mathbin{\dot\cup}\mathcal E_1$. With a maximum over
an empty family defined as zero, the propagated and combined bounds satisfy
\[
\begin{aligned}
B_{\mathrm{set}}(\mathcal S)
&:=\max\left\{
\max_{\mathcal E\in\mathcal I_{\mathrm{reg}}(\mathcal S)}B(\mathcal E),
\max_{(\mathcal E_0,\mathcal E_1)\in\mathcal U_{\mathrm{reg}}(\mathcal S)}
\{B(\mathcal E_0)+B(\mathcal E_1)\}
\right\},\\
B(\mathcal S)
&:=\max\left\{B_{\mathrm{dir}}(\mathcal S),B_{\mathrm{set}}(\mathcal S)\right\},\\
L_{\textsf{leaf}}(\mathcal S)
&:=\lambda N_0+B(\mathcal S)
\le H(\mathcal D_s).
\end{aligned}
\]
The first inner term gives superset inheritance and the second gives disjoint-set
aggregation. Repeated local updates compose these two operations over the
registered relation graph. Section~\ref{sec:full_pruned_dp} describes how these
set bounds are combined with Bellman bounds, and
Section~\ref{sec:synth_efficiency} measures the resulting reduction in MNL
evaluations.

\subsection{The Full Pruned Dynamic Program}\label{sec:full_pruned_dp}

These leaf bounds enter the dynamic program through the incumbent cap~$U$ at
state $(\mathcal D_s,d)$.
For state $(\mathcal D_s,d)$, augment the split set in~\eqref{eq:dp} with the
stop action
$\bar{\mathcal A}(\mathcal D_s,d):=\{\textsf{leaf}\}\cup\mathcal A(\mathcal D_s,d)$.
The exact action values are $Q_{\textsf{leaf}}:=H(\mathcal D_s)$ and
$Q_f:=V(\mathcal D_s^{f=0},d+1)+V(\mathcal D_s^{f=1},d+1)$, so
$V(\mathcal D_s,d)=\min_{a\in\bar{\mathcal A}(\mathcal D_s,d)}\{Q_a\}$.
Use $L_{\textsf{leaf}}(\mathcal S)$ for the stop action and, for a split~$f$,
write $L_b:=L_V(\mathcal D_s^{f=b},d+1)$ for $b\in\{0,1\}$ and set
$L_f=L_0+L_1$. Define
$L_V:=\min_{a\in\bar{\mathcal A}(\mathcal D_s,d)}\{L_a\}$. Then
\[
L_a\ge U
\ \Longrightarrow\ a\text{ is pruned under cap }U,
\qquad
L_V\ge U
\ \Longrightarrow\ (\mathcal D_s,d)\text{ is pruned under cap }U.
\]
A capped call returns $(\textsf{EXACT},Q)$ when its exact value satisfies
$Q<U$; otherwise it returns $(\textsf{PRUNED},L)$ with $U\le L\le Q$.
A pruning return is therefore relative to its incoming cap. The bound and any
partial leaf solve remain stored, so a later call under a different cap can
continue without restarting. Applying this return rule recursively yields valid
action and state bounds at every depth.

\begin{figure}[!htbp]
\centering
\includegraphics[width=0.98\linewidth]{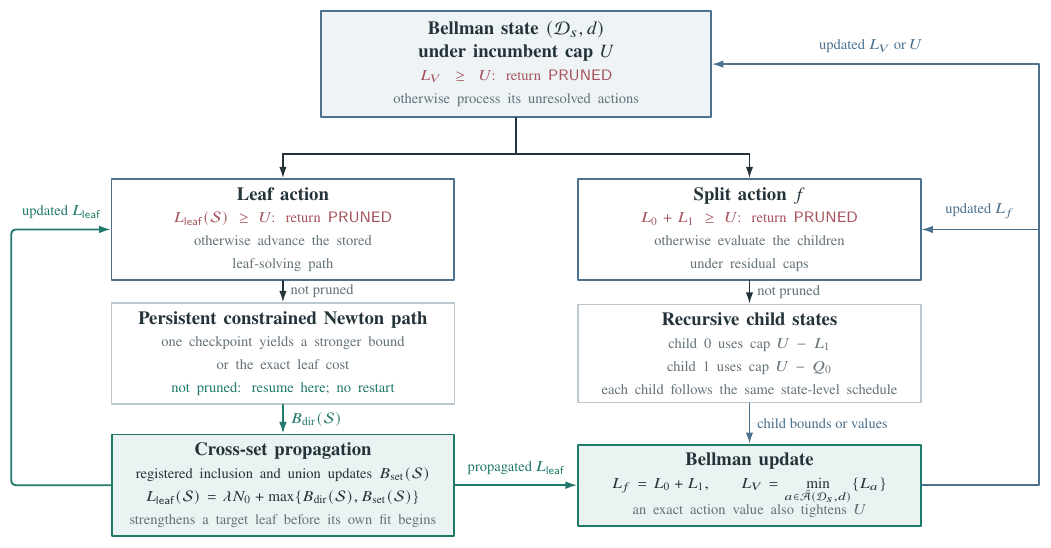}
\caption{Scheduling and information flow in the full pruned dynamic program.
Each Bellman state tests its leaf and split actions against an incumbent cap. An
unsuccessful pruning test at a leaf resumes the same constrained Newton path, while split
evaluation recurses under residual caps. Cross-set and Bellman updates return
stronger bounds and exact values to the search.}
\label{fig:full_pruned_schedule}
\end{figure}

When a state is opened, the algorithm first registers its set relations and
refreshes the available leaf, split, and state bounds. This order allows a
propagated bound to remove an action before a new MNL context is evaluated. If
$L_V\ge U$, the current call returns $\textsf{PRUNED}$. Otherwise, the leaf action and the candidate
splits are considered in a fixed order, and an exact action value below~$U$
tightens the cap for every action that remains.

For the leaf action, the algorithm first tests
$L_{\textsf{leaf}}(\mathcal S)$. If the test does not prune, fitting begins at,
or resumes from, the first unevaluated checkpoint of the stored
constrained Newton path. A dual-feasible pair strengthens
$B_{\mathrm{dir}}(\mathcal S)$; at convergence, the solver returns the leaf
value. When the bound remains below the cap, the step
already computed at that checkpoint advances the fit. Thus no MNL context or
constrained Newton subproblem is repeated after an unsuccessful pruning test.

For a split~$f$, the child-state bounds first give $L_f=L_0+L_1$. If this sum
does not prune, the first child is solved under the residual cap $U-L_1$. Only
an exact return below this cap can leave the split competitive; the second
child is then solved under $U-Q_0$, where $Q_0$ is the first child's exact
value. Each child applies the same schedule recursively. This is how leaf-level
bounds propagate through the full depth of the Bellman recursion.

Every increase in a direct leaf bound, and every newly registered set relation,
triggers the cross-set update in
Proposition~\ref{prop:cross_set_propagation}, followed by the Bellman updates of
$L_f$ and~$L_V$. Each capped call therefore has one of two valid outcomes: an
exact value below the incoming cap or a lower bound that reaches it. Induction
on the remaining depth shows that the root call with cap~$+\infty$ returns
$V(\mathcal D,0)$. Propagation uses local updates over registered inclusion,
binary-union, and parent-state relations; it introduces neither an auxiliary
optimization nor a new Bellman state. Algorithms~\ref{alg:pruned_dp}--\ref{alg:propagate_bound}
and Section~\ref{app:pruned_dp_correctness} give the complete routines and
correctness argument.

\section{Optimal Pricing and Online Deployment}\label{sec:solution}
Once a customer is routed to a leaf, the fitted demand model must produce a multi-product price vector within the latency of an online offer. The common feature coefficient in each MNL leaf reduces customer-level variation to a scalar score. This structure yields a one-dimensional characterization of the bounded pricing problem, a likelihood-based bound on pricing loss, and a lookup rule that moves numerical optimization offline.

\subsection{Within-Leaf Pricing}\label{sec:pricing}
Fix a leaf~$k$ and suppress tree routing and the fixed leaf parameters. Assume strictly positive price sensitivities, $\gamma_{jk}\ge \underline\gamma>0$, and finite bounds $0\le \underline p_j<\bar p_j<\infty$. The common feature coefficient~$\bm\beta_k$ summarizes customer features in the scalar index $s_k(\bm x):=\bm\beta_k^\top\bm x$, which shifts all product utilities equally relative to the outside option. Customers in the same leaf with the same index therefore face the same pricing problem. Write $A_{jk}(s):=\exp\{\alpha_{jk}+s\}$ for product attractiveness.
For prices~$\bm p$, write
\[
\pi_{jk}(\bm p,s)
=
\frac{A_{jk}(s)e^{-\gamma_{jk}p_j}}
{1+\sum_{r\in\mathcal J}A_{rk}(s)e^{-\gamma_{rk}p_r}},
\qquad
R_k(\bm p,s)=\sum_{j\in\mathcal J}p_j\pi_{jk}(\bm p,s).
\]
The MNL revenue gradient has the simple form
\begin{equation}\label{eq:mnl_revenue_gradient}
\frac{\partial R_k}{\partial p_j}(\bm p,s)
=
\pi_{jk}(\bm p,s)\{1-\gamma_{jk}(p_j-R_k(\bm p,s))\}.
\end{equation}
Setting \eqref{eq:mnl_revenue_gradient} to zero gives the adjusted-markup relation familiar from \citet{GallegoWang2014}. For an interior product, $p_j=1/\gamma_{jk}+R_k(\bm p,s)$. The term $1/\gamma_{jk}$ differs across products, but expected revenue is common to all of them. The products remain coupled, yet only this one scalar must be determined jointly. With price bounds, the KKT sign conditions clip the same expression at each product's limits, placing every optimum on a one-dimensional clipped-markup path.

For a candidate markup~$\mu$, define
\[
p_{jk}(\mu)
:=\operatorname{clip}\!\left(
\gamma_{jk}^{-1}+\mu,\,\underline p_j,\,\bar p_j
\right),
\qquad
\pi_{jk}(\mu,s)
:=\frac{A_{jk}(s)e^{-\gamma_{jk}p_{jk}(\mu)}}
{1+\sum_{r\in\mathcal J}A_{rk}(s)e^{-\gamma_{rk}p_{rk}(\mu)}}.
\]
The common term must equal expected revenue under the clipped prices it induces. Define $\Phi_k(\mu;s):=\sum_{j\in\mathcal J}p_{jk}(\mu)\pi_{jk}(\mu,s)-\mu$.
The next proposition shows that this condition is sufficient and uniquely
determines the optimal price vector.

\begin{proposition}[Within-leaf pricing]\label{prop:pricing_structure}
Fix a leaf~$k$ satisfying the positivity and boundedness conditions above. For any scalar index~$s$, the pricing problem~\eqref{obj:revenue} under the price bounds has a unique optimizer, denoted by $\bm p_k^*(s)$. It is characterized as follows.
\begin{enumerate}
\item[\textup{(i)}] \textup{Scalar fixed point.} The equation
\begin{equation}\label{eq:pricing_fixed_point}
\Phi_k(\mu;s)=0
\end{equation}
has a unique solution $\mu_k^*(s)\in[0,\bar p_{\max}]$, where $\bar p_{\max}:=\max_{j\in\mathcal J}\{\bar p_j\}$. The optimizer is $\bm p_k^*(s)=\bigl(p_{jk}(\mu_k^*(s))\bigr)_{j\in\mathcal J}$. Thus a within-leaf pricing instance is solved by bisection on $[0,\bar p_{\max}]$; each evaluation of~$\Phi_k$ costs $O(|\mathcal J|)$.

\item[\textup{(ii)}] \textup{Scalar sufficiency.} If two feature vectors routed to leaf~$k$ have the same value of $s_k(\bm x)$, then they have the same optimal price vector. Thus the within-leaf price map can be written as $\bm p_k^*(s)$.

\item[\textup{(iii)}] \textup{Regularity.} The map $s\mapsto \bm p_k^*(s)$ is componentwise nondecreasing and Lipschitz:
\[
\|\bm p_k^*(s)-\bm p_k^*(s')\|_\infty
\le
\frac{\bar p_{\max}}{4}|s-s'|,
\qquad
s,s'\in\mathbb R.
\]
\end{enumerate}
\end{proposition}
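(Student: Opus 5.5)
The plan is to work directly from the KKT conditions of \eqref{obj:revenue} on the box $\mathcal P$. Since $\mathcal P$ is compact and $R_k$ is continuous, a maximizer exists and satisfies the box KKT conditions. By \eqref{eq:mnl_revenue_gradient} and $\pi_{jk}>0$, the sign of $\partial R_k/\partial p_j$ equals the sign of $1-\gamma_{jk}(p_j-R)$, with $R:=R_k(\bm p,s)$. These conditions say: $p_j=\underline p_j$ if $\gamma_{jk}^{-1}+R\le\underline p_j$, $p_j=\bar p_j$ if $\gamma_{jk}^{-1}+R\ge\bar p_j$, and $p_j=\gamma_{jk}^{-1}+R$ otherwise. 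Hence every KKT point has the form $p_j=p_{jk}(R)$, and consistency forces $\Phi_k(R;s)=0$. Conversely, any root $\mu$ of $\Phi_k$ gives a KKT point with $R=\mu$.

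Next I will show that $\Phi_k(\cdot;s)$ has exactly one root and that it lies in $[0,\bar p_{\max}]$. At $\mu=0$, $\Phi_k(0;s)$ is the revenue at nonnegative prices, so $\Phi_k(0;s)\ge0$. At $\mu=\bar p_{\max}$, revenue is at most $\bar p_{\max}(1-\pi_0)<\bar p_{\max}$, so $\Phi_k<0$. For uniqueness I will show $\mu\mapsto\Phi_k(\mu;s)+\mu$ has slope strictly less than one wherever it is differentiable. Write $F(\mu)=\sum_j p_j\pi_j$. Then $\partial F/\partial p_j=\pi_j\{1-\gamma_j(p_j-F)\}$, and on unclipped coordinates $p_j=\gamma_j^{-1}+\mu$, so
\[
\frac{dF}{d\mu}=\sum_{j\,\mathrm{free}}\pi_j\gamma_j(\mu-F+\gamma_j^{-1}-\gamma_j^{-1})\cdot\gamma_j^{0}=\sum_{j\,\mathrm{free}}\pi_j\gamma_j(F-\mu)\cdot(-1)\cdot(-1).
\]
More carefully, $1-\gamma_j(p_j-F)=\gamma_j(F-\mu)$, which gives $\frac{dF}{d\mu}=(F-\mu)\sum_{\mathrm{free}}\gamma_j\pi_j$. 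Since $F$ is piecewise $C^1$ and continuous, $\Phi_k=F-\mu$ satisfies $\Phi_k'=\Phi_k\,c(\mu)-1$ with $c\ge0$. At any root the derivative is $-1$, both one-sided at kinks, so $\Phi_k$ crosses zero only downward and the root is unique. Bisection validity and the $O(|\mathcal J|)$ cost per evaluation then follow. Consequently the KKT point is unique; because the global maximizer is a KKT point, it is this point, which gives uniqueness of the optimizer and (i).

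Part (ii) is immediate, since $\Phi_k$ depends on $\bm x$ only through $s$ via $A_{jk}(s)$. For (iii) I will use the implicit function theorem, applied piecewise. Because clipping is monotone and 1-Lipschitz, it suffices to show $\mu^*(s)$ is nondecreasing with $d\mu^*/ds\le\bar p_{\max}/4$. Write $\partial_s F$: scaling all $A_j$ by $e^{s}$ gives $\partial_sF=\sum_jp_j\pi_j-F\sum_j\pi_j=F\pi_0$ at fixed $\mu$. At the root, $\partial_\mu\Phi_k=-1$, so $d\mu^*/ds=F\pi_0=\mu^*\pi_0\ge0$, which gives monotonicity. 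For the bound, $\mu^*=F\le\bar p_{\max}(1-\pi_0)$, so $\mu^*\pi_0\le\bar p_{\max}\pi_0(1-\pi_0)\le\bar p_{\max}/4$.

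The main obstacle is rigor at clipping kinks, where $\Phi_k$ is only piecewise smooth. I plan to handle it with one-sided derivatives, noting that $\mu^*(s)$ is continuous (unique root of a jointly continuous, strictly decreasing-at-root function). Integrating the a.e.\ derivative bound then yields the global Lipschitz estimate.
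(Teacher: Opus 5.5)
Your proposal is correct and follows essentially the same route as the paper. The box KKT sign conditions put every maximizer on the clipped common-markup path. The piecewise identity $\Phi_k'=\bigl(\sum_{j\,\mathrm{free}}\gamma_{jk}\pi_{jk}\bigr)\Phi_k-1$ forces a single downward crossing in $[0,\bar p_{\max}]$. The implicit function theorem then gives $d\mu_k^*/ds=\pi_0\mu_k^*\le\bar p_{\max}\pi_0(1-\pi_0)\le\bar p_{\max}/4$.

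The only step to tighten is the kink argument, because integrating an a.e.\ derivative bound requires more than continuity of $\mu_k^*$. It suffices to observe two facts. First, the $\mu$-breakpoints $\underline p_j-\gamma_{jk}^{-1}$ and $\bar p_j-\gamma_{jk}^{-1}$ do not depend on $s$. Second, $\Phi_k(b;\cdot)$ is strictly increasing for each $b>0$, since $\partial_s\Phi_k=\pi_0\sum_j p_{jk}(b)\pi_{jk}>0$. Hence $\mu_k^*$ meets each breakpoint at most once and is piecewise $C^1$ with finitely many pieces.
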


Appendix~\ref{app:pricing} proves the fixed-point characterization and the
regularity properties in Proposition~\ref{prop:pricing_structure}.

\paragraph{From likelihood fit to pricing loss.}
We next connect the likelihood criterion used for estimation to the revenue
criterion used for pricing. Fix a region~$\mathcal R_k$ with positive
probability whose conditional choice distribution is correctly specified by
$\bm\theta_k^0\in\Theta$, and assume
$\mathbb E[\|\bm P\|_2^2\mid\bm X\in\mathcal R_k]<\infty$. Define the
population negative log-likelihood
\[
\overline{\mathcal L}_k(\bm\theta)
=
\mathbb E\!\left[
-\log\pi_{Yk}(\bm X,\bm P;\bm\theta)
\,\middle|\,
\bm X\in\mathcal R_k
\right].
\]
We now restore the full dependence on~$(\bm x,\bm\theta)$. For arbitrary
$\bm\theta\in\Theta$, write
\[
R_k(\bm p,\bm x;\bm\theta)
=
\sum_{j\in\mathcal J}p_j\pi_{jk}(\bm x,\bm p;\bm\theta),
\qquad
\bm p_k^*(\bm x;\bm\theta)
=
\argmax_{\bm p\in\mathcal P}
R_k(\bm p,\bm x;\bm\theta).
\]
Let $T_\Theta(\bm\theta_k^0)$ be the tangent cone of~$\Theta$ at
$\bm\theta_k^0$, and define the minimum Fisher curvature over feasible
directions by
\begin{equation}\label{eq:restricted_fisher}
m_k
=
\inf_{\substack{
\bm d\in T_\Theta(\bm\theta_k^0)\\
\|\bm d\|_2=1}}
\bm d^\top
\nabla^2\overline{\mathcal L}_k(\bm\theta_k^0)
\bm d.
\end{equation}
We say that the leaf is locally identified if $m_k>0$.

Proposition~\ref{prop:pricing_structure} writes the optimal price as the
coordinatewise projection of the latent-markup vector
\[
w_{jk}(\bm x;\bm\theta):=1/\gamma_j+R_k\{\bm p_k^*(\bm x;\bm\theta),\bm x;\bm\theta\},
\]
where~$\gamma_j$ is the $j$th price-sensitivity component of~$\bm\theta$.
Its fixed-point equation implies that, on some neighborhood~$U_k^w$ of
$\bm\theta_k^0$, this vector is uniformly Lipschitz in the parameter. Let
$L_{w,k}<\infty$ satisfy
\begin{equation}\label{eq:latent_markup_lipschitz}
\sup_{\bm x\in\mathcal X}
\|\bm w_k(\bm x;\bm\theta)
-\bm w_k(\bm x;\bm\theta_k^0)\|_2
\le
L_{w,k}\|\bm\theta-\bm\theta_k^0\|_2,
\qquad
\bm\theta\in\Theta\cap U_k^w.
\end{equation}
Finally, let
\[
\bar\gamma_k=\max_{j\in\mathcal J}\{\gamma_{jk}^0\},
\qquad
M_{R,k}
=
\sup_{\substack{\bm x\in\mathcal X\\\bm p\in\mathcal P}}
\left\|
\nabla_{\bm p\bm p}^2
R_k(\bm p,\bm x;\bm\theta_k^0)
\right\|_{\mathrm{op}}.
\]

For $\bm\theta\in\Theta$, define the excess population negative log-likelihood
and the true pricing loss induced by its plug-in price as
\[
\begin{aligned}
\Delta_k^{\mathcal L}(\bm\theta)
&:=\overline{\mathcal L}_k(\bm\theta)
-\overline{\mathcal L}_k(\bm\theta_k^0),\\
\Delta_k^R(\bm\theta,\bm x)
&:=R_k\{\bm p_k^*(\bm x;\bm\theta_k^0),\bm x;\bm\theta_k^0\}
-R_k\{\bm p_k^*(\bm x;\bm\theta),\bm x;\bm\theta_k^0\}.
\end{aligned}
\]

\begin{theorem}[Likelihood as a pricing surrogate]\label{thm:likelihood_pricing}
Suppose leaf~$k$ is correctly specified, locally identified, and satisfies the
moment condition above. Then there is a neighborhood~$U_k$ of
$\bm\theta_k^0$ such that, for every $\bm\theta\in\Theta\cap U_k$ and
$\bm x\in\mathcal X\cap\mathcal R_k$,
\begin{equation}\label{eq:likelihood_pricing_constant}
0\le \Delta_k^R(\bm\theta,\bm x)
\le C_k\Delta_k^{\mathcal L}(\bm\theta),
\qquad
C_k:=\frac{4L_{w,k}^2}{m_k}
\left(\bar\gamma_k+\frac{M_{R,k}}{2}\right).
\end{equation}
If, in addition, $\Theta$ is compact, this bound holds on all of~$\Theta$
with a finite constant~$C_k^{\mathrm{glob}}$.
\end{theorem}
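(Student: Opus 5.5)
The plan is to chain three local inequalities: revenue loss is at most quadratic in the price error, the price error is at most the latent-markup error, and the squared parameter error is controlled by the excess likelihood.

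\textbf{Step 1 (revenue loss is quadratic in the price error).} Fix $\bm x$ and write $\bm p^0:=\bm p_k^*(\bm x;\bm\theta_k^0)$ and $\bm p:=\bm p_k^*(\bm x;\bm\theta)$. The lower bound $\Delta_k^R\ge0$ is immediate, since $\bm p^0$ maximizes the true revenue over $\mathcal P$ and $\bm p\in\mathcal P$.

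For the upper bound, Taylor-expand $R_k(\cdot,\bm x;\bm\theta_k^0)$ around $\bm p^0$ along the segment to $\bm p$, which stays in the convex box $\mathcal P$. This gives
\[
\Delta_k^R\le -\nabla_{\bm p}R_k(\bm p^0)^\top(\bm p-\bm p^0)+\tfrac{M_{R,k}}{2}\|\bm p-\bm p^0\|_2^2 .
\]
The gradient term is nonpositive at interior coordinates and at binding bounds, by the KKT sign conditions. It needs a bound only where $p_j\ne p_j^0$ across a clipped coordinate. There I would use the gradient formula~\eqref{eq:mnl_revenue_gradient}:
\[
\partial_j R_k=\pi_{jk}\gamma_{jk}\bigl(w^0_{jk}-p^0_j\bigr).
\]
Since $p^0_j$ is the clip of $w^0_{jk}$, the quantity $|w^0_{jk}-p^0_j|$ is at most the distance the projection moved. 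If $p^0_j$ is clipped at a bound and $p_j$ moves inward, then $|w^0_{jk}-p^0_j|\le|w_{jk}-w^0_{jk}|$. Combining this with $\pi_{jk}\le1$ bounds the linear term by
\[
\bar\gamma_k\|\bm w-\bm w^0\|_2\,\|\bm p-\bm p^0\|_2 .
\]
Because projection onto a box is nonexpansive, $\|\bm p-\bm p^0\|_2\le\|\bm w-\bm w^0\|_2$. Together these give
\[
\Delta_k^R\le\bigl(\bar\gamma_k+\tfrac{M_{R,k}}2\bigr)\|\bm w-\bm w^0\|_2^2 .
\]
Applying~\eqref{eq:latent_markup_lipschitz} then yields
\[
\Delta_k^R\le\bigl(\bar\gamma_k+\tfrac{M_{R,k}}2\bigr)L_{w,k}^2\|\bm\theta-\bm\theta_k^0\|_2^2 .
\]
One caveat: I use the $\gamma$'s of $\bm\theta_k^0$ in the true gradient, so $\bar\gamma_k$ is the right constant.

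\textbf{Step 2 (local quadratic growth of the likelihood).} Correct specification makes $\bm\theta_k^0$ a minimizer of $\overline{\mathcal L}_k$, because the expected log-likelihood is maximized at the truth (Gibbs' inequality). The first-order condition therefore holds:
\[
\nabla\overline{\mathcal L}_k(\bm\theta_k^0)^\top\bm d\ge0 \quad\text{for all } \bm d\in T_\Theta(\bm\theta_k^0).
\]
In fact the score has mean zero, so this gradient is zero. The moment condition $\mathbb E\|\bm P\|^2<\infty$ makes the Hessian finite and continuous, via dominated convergence with $\|\bm a_{ij}\|^2$ bounds.

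Since $\Theta$ is polyhedral, $\bm\theta-\bm\theta_k^0\in T_\Theta(\bm\theta_k^0)$ for all $\bm\theta\in\Theta$ near $\bm\theta_k^0$. A second-order Taylor expansion with Hessian continuity then gives, on a small enough neighborhood,
\[
\Delta_k^{\mathcal L}(\bm\theta)\ge\tfrac{m_k}{4}\|\bm\theta-\bm\theta_k^0\|_2^2 .
\]
This is the curvature $m_k/2$ from the quadratic term, halved to absorb the Hessian variation. Choosing $U_k$ inside $U_k^w$ and this neighborhood and combining with Step 1 gives exactly $C_k$.

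\textbf{Step 3 (global version).} If $\Theta$ is compact, use $0\le\Delta_k^R\le\bar p_{\max}$. On $\Theta\setminus U_k$, the function $\Delta_k^{\mathcal L}$ is continuous and strictly positive, so it is bounded below by some $\delta>0$. Strict positivity follows from local identification plus convexity: a second minimizer would create a flat segment ending at $\bm\theta_k^0$, which contradicts the local quadratic growth. Then $C_k^{\mathrm{glob}}:=\max\{C_k,\bar p_{\max}/\delta\}$ works.

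\textbf{Main obstacle.} The main obstacle is the boundary case in Step 1. Bounding the first-order revenue term at clipped coordinates needs the projection geometry argument sketched above. I would check it coordinatewise in three cases: both prices interior, both clipped, and one clipped.
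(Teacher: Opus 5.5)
Your proposal is correct and follows the paper's proof essentially step for step. It uses the same restricted growth bound $\Delta_k^{\mathcal L}(\bm\theta)\ge (m_k/4)\|\bm\theta-\bm\theta_k^0\|_2^2$, obtained from a vanishing population score and Hessian continuity. It uses the same Taylor expansion in $\bm p$, with the first-order term controlled only at coordinates that leave a bound; your inequality $|w_{jk}^0-p_j^0|\le|w_{jk}-w_{jk}^0|$, combined with Cauchy--Schwarz and box nonexpansiveness, is equivalent to the paper's interval-projection inequality. The global extension is the same uniqueness-plus-compactness argument giving $C_k^{\mathrm{glob}}=\max\{C_k,\bar p_{\max}/\delta\}$.

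Two small points. The paper additionally verifies that a finite $L_{w,k}$ satisfying \eqref{eq:latent_markup_lipschitz} exists, by showing $\partial F/\partial\mu\le-1/2$ near the root graph of the markup fixed point; you take this as given from the setup. Also, by the KKT conditions the term $-\nabla_{\bm p}R_k(\bm p^0)^\top(\bm p-\bm p^0)$ is nonnegative, not nonpositive, which is exactly why it must be bounded at the crossing coordinates.
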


The constant explains how estimation error affects pricing. Larger~$m_k$
means that a given excess population likelihood loss allows less parameter
error, while~$L_{w,k}$ measures how much that error can change the latent markups.
The clipped-markup structure and revenue curvature then control the induced
pricing loss, even when the set of binding price bounds changes. Within the
theorem's local setting, a smaller excess population likelihood loss therefore
gives a tighter upper bound on pricing loss.
Tree construction remains distinct because it determines which observations
share a leaf. Appendix~\ref{app:likelihood_pricing} gives the proof and the
compact-$\Theta$ extension.

\subsection{Online Deployment}\label{sec:lookup}
Although scalar bisection is inexpensive, precomputation avoids online
optimization altogether. For each leaf, we precompute the map from scalar
index to optimal prices. Let
\[
\mathcal X_k:=\mathcal X\cap\mathcal R_k(\mathbb{T}^*),
\qquad
\underline s_k:=\inf_{\bm x\in\mathcal X_k}\{\bm\beta_k^\top\bm x\},
\qquad
\bar s_k:=\sup_{\bm x\in\mathcal X_k}\{\bm\beta_k^\top\bm x\}.
\]
These endpoints are finite because~$\mathcal X$ is compact; they are obtained from the leaf routing constraints and feature bounds. The deployed policy is tree routing followed by a table query, as illustrated in Figure~\ref{fig:lookup_pipeline}.

\paragraph{Precomputation.}
For each leaf~$k$, discretize $[\underline s_k,\bar s_k]$ into $M$ equally spaced grid points. At each grid point, solve~\eqref{eq:pricing_fixed_point} and store the resulting table $\{(s_k^{(m)},\bm p_k^*(s_k^{(m)})):m=1,\ldots,M\}$.
The one-time cost is $O(K(\mathbb{T}^*)M\mathcal T_{\textup{fp}})$, where $\mathcal T_{\textup{fp}}$ is the cost of one scalar fixed-point solve.

\paragraph{Price evaluation.}
Given~$\bm x$, the system traverses $\mathbb{T}^*$ to its leaf~$k$, computes $s_k(\bm x)=\bm\beta_k^\top\bm x$, and retrieves the price vector at the nearest grid point. Equal spacing makes the table index a constant-time calculation. Because all fixed-point solves occur offline, per-query complexity is $O(D+|\mathcal F|)$, independent of~$M$ and $\mathcal T_{\textup{fp}}$.

The next proposition bounds the lookup error; Appendix~\ref{app:lookup} gives the proof.

\begin{proposition}[Lookup approximation]\label{prop:lookup}
Under the conditions of Proposition~\ref{prop:pricing_structure}, fix a leaf~$k$ and let $\Delta_k=(\bar s_k-\underline s_k)/(M-1)$ be the spacing of an equally spaced grid with $M\ge2$ points. Let $s_m$ be the grid point nearest to~$s$, write $\tilde{\bm p}_k(s):=\bm p_k^*(s_m)$, and set $\gamma_{\max}:=\max_{j\in\mathcal J}\{\gamma_{jk}\}$. For all $s\in[\underline s_k,\bar s_k]$, the nearest-neighbor table satisfies
\begin{enumerate}
\item[\textup{(i)}] \textup{(Price error)} $\displaystyle\|\bm p_k^*(s)-\tilde{\bm p}_k(s)\|_\infty \le \frac{\bar p_{\max}}{8}\Delta_k = O(1/M)$.
\item[\textup{(ii)}] \textup{(Revenue loss)}
\[
R_k\bigl(\bm p_k^*(s),s\bigr)
-
R_k\bigl(\tilde{\bm p}_k(s),s\bigr)
\le
\frac{3\gamma_{\max}(2+\gamma_{\max}\bar p_{\max})\bar p_{\max}^2}{128}
\Delta_k^2
=O(1/M^2),
\]
\end{enumerate}
\end{proposition}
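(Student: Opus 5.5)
Part (i) follows from the regularity statement in Proposition~\ref{prop:pricing_structure}(iii); part (ii) follows from a second-order Taylor expansion of $R_k(\cdot,s)$ around the constrained optimizer $\bm p_k^*(s)$, where first-order optimality kills the linear term.

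\textbf{Part (i).} Since the grid is equally spaced with spacing $\Delta_k$ and covers $[\underline s_k,\bar s_k]$, the nearest grid point satisfies $|s-s_m|\le\Delta_k/2$. Proposition~\ref{prop:pricing_structure}(iii) then gives
\[
\|\bm p_k^*(s)-\bm p_k^*(s_m)\|_\infty\le\frac{\bar p_{\max}}{4}\cdot\frac{\Delta_k}{2}=\frac{\bar p_{\max}}{8}\Delta_k .
\]
Since $\Delta_k=(\bar s_k-\underline s_k)/(M-1)$, this is $O(1/M)$.

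\textbf{Part (ii), setup.} Write $\bm h:=\tilde{\bm p}_k(s)-\bm p_k^*(s)$. Both price vectors lie in the box $\mathcal P$, and $R_k(\cdot,s)$ is smooth. Taylor's theorem with a point $\bm\xi$ on the segment between them (which lies in $\mathcal P$ by convexity) gives
\[
R_k(\bm p_k^*,s)-R_k(\tilde{\bm p}_k,s)=-\nabla_{\bm p}R_k(\bm p_k^*,s)^\top\bm h-\tfrac12\,\bm h^\top\nabla^2_{\bm p\bm p}R_k(\bm\xi,s)\,\bm h .
\]
The linear term is handled by the box KKT conditions at the maximizer $\bm p_k^*(s)$:
\begin{itemize}
\item if $p_j^*$ is interior, then $\partial R_k/\partial p_j=0$;
\item if $p_j^*=\underline p_j$, then the partial derivative is $\le0$ and $h_j\ge0$;
\item if $p_j^*=\bar p_j$, then the partial derivative is $\ge0$ and $h_j\le0$.
\end{itemize}
Hence $\nabla R_k(\bm p_k^*,s)^\top\bm h\le0$, so the linear term contributes nothing positive. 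It remains to bound the quadratic term. We use
\[
|\bm h^\top\bm M\bm h|\le\|\bm h\|_\infty^2\sum_{j,l}|M_{jl}|,
\]
so it suffices to show $\sum_{j,l}|\partial^2R_k/\partial p_j\partial p_l|\le 3\gamma_{\max}(2+\gamma_{\max}\bar p_{\max})$ uniformly on $\mathcal P$. Combined with $\|\bm h\|_\infty^2\le\bar p_{\max}^2\Delta_k^2/64$ from part (i), this gives exactly the stated constant $3\gamma_{\max}(2+\gamma_{\max}\bar p_{\max})\bar p_{\max}^2/128$.

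\textbf{Part (ii), Hessian bound.} Differentiate the gradient formula~\eqref{eq:mnl_revenue_gradient},
\[
g_j=\pi_j\{1-\gamma_j(p_j-R)\},
\]
using the standard MNL identities
\[
\partial\pi_j/\partial p_l=-\gamma_l\pi_l(\delta_{jl}-\pi_j),\qquad \partial R/\partial p_l=g_l .
\]
This yields
\[
H_{jl}=-\gamma_l\pi_l(\delta_{jl}-\pi_j)\{1-\gamma_j(p_j-R)\}-\pi_j\gamma_j(\delta_{jl}-g_l).
\]
We then sum absolute values using three facts:
\begin{itemize}
\item $\sum_j\pi_j\le1$ and $\sum_l\pi_l|\delta_{jl}-\pi_j|\le2\pi_j$-type estimates;
\item $0\le R\le\bar p_{\max}$ and $|p_j-R|\le\bar p_{\max}$, so $|1-\gamma_j(p_j-R)|\le1+\gamma_{\max}\bar p_{\max}$;
\item $\sum_l|g_l|\le1+\gamma_{\max}\bar p_{\max}$.
\end{itemize}
Grouping the diagonal and off-diagonal contributions should give a total of order $\gamma_{\max}\{2(1+\gamma_{\max}\bar p_{\max})+1+(1+\gamma_{\max}\bar p_{\max})\}$, which is at most $3\gamma_{\max}(2+\gamma_{\max}\bar p_{\max})$.

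\textbf{Main obstacle.} I expect this bookkeeping to be the main obstacle: matching the constant $3(2+\gamma_{\max}\bar p_{\max})$ exactly. If a direct grouping overshoots, I would first tighten using $\sum_j\pi_j|\cdot|$ weighting rather than crude maxima. Only then would I switch to bounding $\bm h^\top H\bm h$ via $\bm h^\top\operatorname{diag}(\bm\pi)\bm h$ and $(\bm\pi^\top\bm h)^2$ terms, each at most $\|\bm h\|_\infty^2$.
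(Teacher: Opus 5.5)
\textbf{Gap in part (ii): sign error in the first-order term.} Your part (i) is the paper's argument. Your Hessian bookkeeping also matches it: bounding $\sum_{j,l}|H_{jl}|$ entrywise gives $\gamma_{\max}(4+3\gamma_{\max}\bar p_{\max})$, which is exactly your grouping.

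The problem is the linear term. Your expansion writes the loss as $-\nabla_{\bm p}R_k(\bm p_k^*,s)^\top\bm h-\tfrac12\bm h^\top\nabla^2_{\bm p\bm p}R_k(\bm\xi,s)\bm h$. The box KKT conditions give $\nabla_{\bm p}R_k(\bm p_k^*,s)^\top\bm h\le0$. Hence $-\nabla_{\bm p}R_k(\bm p_k^*,s)^\top\bm h\ge0$, so the linear term adds a nonnegative amount to the loss.

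The variational inequality at a maximizer only says that moving off the optimum cannot gain revenue to first order. It gives no upper bound on what is lost. Without further information you only have $|\nabla_{\bm p}R_k^\top\bm h|\le\|\nabla_{\bm p}R_k\|_1\|\bm h\|_\infty=O(\Delta_k)$. Nothing in your argument excludes a coordinate with $p_j^*=\bar p_j$, marginal revenue of order one, and $h_j\neq0$. That case would make the loss first order in $\Delta_k$.

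\textbf{What is missing.} You need to show that the marginal revenue at $\bm p_k^*(s)$ is $O(\Delta_k)$ on every coordinate that actually moves. Write $\partial R_k/\partial p_j=\pi_j\varphi_j$ with $\varphi_j=1-\gamma_j\{p_j^*-\mu^*(s)\}$, using $R_k(\bm p_k^*,s)=\mu^*(s)$, and set $\varepsilon=\bar p_{\max}\Delta_k/8$.
\begin{enumerate}
\item[(a)] Interior coordinates have $\varphi_j=0$.
\item[(b)] Coordinates clamped at the same bound at $s$ and $s_m$ have $h_j=0$.
\item[(c)] If the clamping status changes, continuity of $\mu^*$ gives a point $s_c$ between $s$ and $s_m$ at which $1/\gamma_j+\mu^*(s_c)$ equals the bound binding at $s$. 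Then $|\varphi_j|=\gamma_j|\mu^*(s)-\mu^*(s_c)|\le\gamma_j\varepsilon$, by the Lipschitz bound on $\mu^*$ from the proof of Proposition~\ref{prop:pricing_structure}.
\end{enumerate}
This regime-crossing step is the paper's argument. Equivalently, you could apply the interval-projection inequality from the proof of Theorem~\ref{thm:likelihood_pricing} to the latent markups $1/\gamma_j+\mu^*(s)$ and $1/\gamma_j+\mu^*(s_m)$.

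Either way, $|\nabla_{\bm p}R_k(\bm p_k^*,s)^\top\bm h|\le\sum_j\pi_j\gamma_j\varepsilon^2\le\gamma_{\max}\varepsilon^2$. Adding $\tfrac12\gamma_{\max}(4+3\gamma_{\max}\bar p_{\max})\varepsilon^2$ from your Hessian bound gives $\tfrac32\gamma_{\max}(2+\gamma_{\max}\bar p_{\max})\varepsilon^2$, which is the stated constant. The slack of $2\gamma_{\max}$ between your Hessian estimate and $3\gamma_{\max}(2+\gamma_{\max}\bar p_{\max})$ is exactly what the first-order term consumes. It should not be spent on the quadratic term.
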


Lipschitz continuity gives the linear price-error bound. For revenue, interior
optimal prices have zero marginal revenue, while a price that remains at the
same bound does not move. If a price leaves its bound at the neighboring grid
point, its displacement and its marginal revenue at the original optimum are
both bounded by multiples of the grid spacing. The revenue loss is therefore
quadratic even when binding bounds change. Halving the spacing halves the
price-error bound and quarters the revenue-loss bound, relative to exact pricing
under the same fitted model.

\begin{figure}[htb]
\centering
\includegraphics[width=0.94\linewidth]{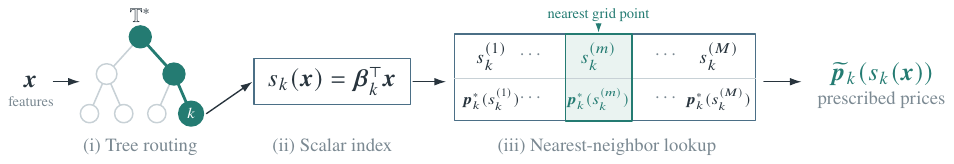}
\caption{Leaf-indexed lookup for price evaluation: a feature vector~$\bm{x}$ is routed to a leaf~$k$, its scalar index $s_k(\bm{x})=\bm{\beta}_k^\top\bm{x}$ is computed, and the prescribed price vector~$\tilde{\bm p}_k(s_k(\bm{x}))$ is retrieved from the leaf's precomputed one-dimensional table by nearest-neighbor matching.\label{fig:lookup_pipeline}}
\end{figure}

\section{Synthetic Experiments}\label{sec:synthetic}
The synthetic experiments test whether pruning makes global tree estimation computationally practical and whether global construction uses a limited depth more effectively than greedy splitting. We first use an ablation to identify the computational contribution of path reuse and cross-set propagation. We then vary the conflict between the best one-step split and the best depth-limited tree, comparing predictive fit, structural recovery, and pricing performance.

\subsection{Experiment Design}\label{sec:synth_dgp}
Each synthetic sample contains $N=5{,}000$ observations, split 80\% for training and 20\% for testing. We generate five continuous features $\{x_1,x_2,x_3,x_4,x_5\}$ and four binary features $\{d_1,d_2,d_3,d_4\}$, of which only $x_1,x_2,x_3$, and~$d_1$ enter the true tree. Every instance has the same asymmetric five-leaf structure rooted at~$x_1$. Conditional on the leaf, choices follow a two-product MNL with segment-specific intercepts and price sensitivities.

For the model comparison, we vary the agreement between the best one-step root split and the optimal depth-limited tree while holding the target structure fixed. In the \emph{Aligned} regime, the one-step criterion also selects~$x_1$. The \emph{Transition} regime uses ten calibrated parameter settings around the point at which the population root ranking changes; five favor~$x_1$ and five favor~$d_1$. In the \emph{Conflict} regime, the one-step criterion selects~$d_1$, although this split cannot represent the right-branch heterogeneity carried by~$x_2$ and~$x_3$ within the prescribed depth. Every split in the true tree remains worthwhile after its BIC penalty in all three regimes. Appendix~\ref{app:synth_dgp} gives the full construction.

\subsection{Efficiency of Pruning}\label{sec:synth_efficiency}

The computational experiment uses the Conflict regime. All four implementations solve the same \textup{[OCMT-MNL-EST]} instance with identical parent warm starts and the constraint $\gamma_j\ge10^{-4}$. Let $N_{\textup{tr}}=0.8N$ and let $B$ be the number of split bins per continuous feature. We consider $D\in\{3,4\}$ and $B\in\{5,10,15\}$, with $\lambda=\lambda_{\textup{BIC}}(N_{\textup{tr}})$ and $N_{\min}=\max\{0.05N_{\textup{tr}},20N_0\}$. Timings use one thread on an Apple M3 Pro with 36~GB of memory.

We compare four nested implementations. Pure is the unpruned dynamic program. Direct applies the within-leaf Fenchel bound but restarts fitting after an unsuccessful check; Pathwise resumes that Newton trajectory; and Full adds the two propagation rules in Proposition~\ref{prop:cross_set_propagation}. All return the same optimum. Table~\ref{tab:synthetic_pruning_efficiency} reports time and workload, where an MNL context evaluation computes the probabilities and derivatives used for bounding or fitting, and pre-fit pruning is the share of candidate leaves removed before their first context evaluation.

\begin{table}[!t]
\TABLE
{Computational ablation across 60 paired instances ($N=5{,}000$, BIC; 10 replications per setting).\label{tab:synthetic_pruning_efficiency}}
{\begin{minipage}{\linewidth}
\textit{Panel A. Computational time (seconds)}
\par\smallskip
\begin{tabular*}{\linewidth}{@{\extracolsep{\fill}}ccrrrrr}
\toprule
$D$ & $B$ & Pure & Direct & Pathwise & Full & Full speedup \\
\midrule
3 & 5  & 7.8 (0.1)     & 2.1 (0.2)   & 2.0 (0.2)   & 3.1 (0.6)   & 2.48$\times$ \\
3 & 10 & 53.9 (0.9)    & 9.8 (0.3)   & 9.7 (0.4)   & 9.0 (0.8)   & 5.96$\times$ \\
3 & 15 & 171.3 (4.0)   & 28.7 (1.3)  & 28.5 (1.2)  & 25.2 (1.6)  & 6.79$\times$ \\
\addlinespace[2pt]
4 & 5  & 36.6 (0.4)    & 8.7 (0.3)   & 8.5 (0.3)   & 9.0 (0.7)   & 4.04$\times$ \\
4 & 10 & 502.8 (4.7)   & 100.9 (3.7) & 99.5 (3.2)  & 71.7 (2.1)  & 7.01$\times$ \\
4 & 15 & 2,227.3 (27.0)& 439.6 (12.2)& 436.2 (16.4)& 311.6 (12.9)& 7.15$\times$ \\
\midrule
\multicolumn{2}{l}{All 60 (total)} & 29,996.8 & 5,896.8 & 5,843.9 & 4,297.0 & 6.98$\times$ \\
\bottomrule
\end{tabular*}

\vspace{5pt}
\textit{Panel B. Search workload by setting}
\par\smallskip
\begin{tabular*}{\linewidth}{@{\extracolsep{\fill}}lrrrrrr}
\toprule
 & \multicolumn{3}{c}{$D=3$} & \multicolumn{3}{c}{$D=4$} \\
\cmidrule(lr){2-4}\cmidrule(lr){5-7}
 & $B=5$ & $B=10$ & $B=15$ & $B=5$ & $B=10$ & $B=15$ \\
\midrule
\multicolumn{7}{l}{\textit{Exact leaf fits}} \\
Pure     & 9,679 & 64,982 & 202,288 & 47,111 & 624,470 & 2,759,635 \\
Direct   & 28    & 18     & 41      & 54     & 57      & 240 \\
Pathwise & 28    & 18     & 41      & 54     & 57      & 242 \\
Full     & 20    & 38     & 45      & 41     & 133     & 346 \\
\addlinespace[3pt]
\multicolumn{7}{l}{\textit{MNL context evaluations}} \\
Pure     & 31,342 & 209,759 & 650,285 & 154,074 & 2,034,980 & 8,957,737 \\
Direct   & 10,475 & 65,437  & 201,150 & 52,010  & 647,196   & 2,826,392 \\
Pathwise & 10,000 & 63,313  & 195,198 & 48,865  & 615,403   & 2,701,426 \\
Full     & 9,340  & 42,397  & 119,804 & 40,019  & 307,285   & 1,150,728 \\
\addlinespace[3pt]
\multicolumn{7}{l}{\textit{Pre-fit pruning under Full (\%)}} \\
Inheritance & 2.9 & 33.8 & 40.2 & 12.6 & 48.0 & 56.1 \\
Aggregation & 6.3 & 3.8  & 2.6  & 9.8  & 5.8  & 3.6 \\
All         & 9.2 & 37.9 & 43.1 & 22.7 & 54.5 & 60.8 \\
\bottomrule
\end{tabular*}
\end{minipage}}
{Entries are means over 10 paired replications; parentheses in Panel A give standard deviations. Pre-fit pruning is the share of candidate leaves eliminated before their first MNL context evaluation.}
\end{table}

Direct reveals the strength of the within-leaf bound: it reduces MNL context evaluations by 68.4\% and nearly eliminates completed fits. By resuming unsuccessful checks, Pathwise removes a further 4.4\% of the remaining contexts. Its incremental gain is modest because most direct checks already succeed. Traversal-order differences explain the small variation in exact-fit counts among pruned variants; all avoid more than 99.98\% of Pure's fits.

Cross-set propagation supplies the main scaling gain. Full reduces context evaluations by a further 54.1\% relative to Pathwise and by 86.1\% relative to Pure. It eliminates 57.7\% of candidate leaves before their first context evaluation. Superset inheritance and disjoint-set aggregation account for 52.8 and 4.0 percentage points of this rate, respectively, and together explain 98.5\% of all pre-fit exclusions. The remaining exclusions follow from the Bellman recursion.

The value of propagation grows with the state space. At $B=5$, its fixed bookkeeping cost is not yet offset by the contexts removed, and Full is slightly slower than Pathwise. From $B=5$ to 15, its incremental context reduction rises from 6.6\% to 38.6\% for $D=3$ and from 18.1\% to 57.4\% for $D=4$. At $D=4$ and $B=15$, Full eliminates 60.8\% of candidate leaves before fitting and takes 311.6 seconds versus 2,227.3 for Pure, a 7.15$\times$ speedup. Across all 60 instances, Full reduces exact fits by 99.98\%, context evaluations by 86.13\%, and computational time by 85.68\%; the last two reductions closely track each other.

\subsection{Optimal versus Greedy Choice Model Trees}\label{sec:synth_results}

We compare a homogeneous Single MNL, GCMT-MNL \citep{Aouadetal2023}, and OCMT-MNL. The two tree methods use the same MNL leaf class but different construction and complexity selection. OCMT-MNL uses $D=4$, $B=10$, and BIC. GCMT-MNL uses the tree settings in the authors' CMT experiment script: maximum growth depth~14, minimum node weight~50, a quantile step of~0.05, and validation-based pruning. All methods use the same constrained MNL specification. Prices are optimized under each fitted model between zero and twice the base prices and evaluated under ground-truth demand.

Table~\ref{tab:synthetic_main} reports predictive, structural, and pricing performance. Let $\mathcal I_{\textup{te}}$ denote the test observations. Predictive fit is measured by out-of-sample negative log-likelihood (OOS NLL), $-\sum_{i\in\mathcal I_{\textup{te}}}\log\hat\pi_{i,y_i}$, where $\hat\pi_{i,y_i}$ is the fitted probability of the realized choice. Let $k_i^0$ denote observation~$i$'s true segment and define $R_i(\bm p):=R_{k_i^0}(\bm p,\bm x_i;\bm\theta_{k_i^0}^0)$. If $\bm p_i^*$ is the oracle price and $\widehat{\bm p}_i$ is the price prescribed by a fitted model, revenue loss is
\[
\frac{\sum_{i\in\mathcal I_{\textup{te}}}
\left\{R_i(\bm p_i^*)-R_i(\widehat{\bm p}_i)\right\}}
 {\sum_{i\in\mathcal I_{\textup{te}}}R_i(\bm p_i^*)}
\times 100\%.
\]
The root metric records how often a method selects the true root~$x_1$. Structural recovery uses the adjusted Rand index (ARI), which equals one for identical partitions and has expectation zero for independent random partitions; leaf count measures parsimony. Appendix~\ref{app:synth_dgp} defines ARI.

\begin{table}[htb]
\TABLE
{Predictive, structural, and pricing performance across three synthetic regimes ($N=5{,}000$, 10 replications per regime).\label{tab:synthetic_main}}
{\begin{tabular*}{\linewidth}{@{\extracolsep{\fill}}lccccc}
\toprule
Method & OOS NLL & Revenue loss (\%) & Root $x_1$ (\%) & ARI & Leaves \\
\midrule
\multicolumn{6}{l}{\textit{Panel A. Aligned}} \\
\addlinespace[1pt]
Single MNL & 974.8 (19.8) & 7.20 (1.63) & -- & 0.000 (0.000) & 1.0 (0.0) \\
GCMT-MNL & 818.9 (21.2) & 5.63 (3.15) & \textbf{100\%} & 0.858 (0.038) & 6.2 (0.4) \\
OCMT-MNL & \textbf{817.4 (18.3)} & \textbf{4.85 (2.80)} & \textbf{100\%} & \textbf{0.969 (0.016)} & \textbf{5.0 (0.0)} \\
\addlinespace[4pt]
\multicolumn{6}{l}{\textit{Panel B. Transition}} \\
\addlinespace[1pt]
Single MNL & 977.0 (19.6) & 6.72 (1.52) & -- & 0.000 (0.000) & 1.0 (0.0) \\
GCMT-MNL & 836.2 (20.0) & 6.66 (3.29) & 50\% & 0.715 (0.149) & 8.1 (2.3) \\
OCMT-MNL & \textbf{823.8 (17.8)} & \textbf{5.30 (3.34)} & \textbf{100\%} & \textbf{0.969 (0.016)} & \textbf{5.0 (0.0)} \\
\addlinespace[4pt]
\multicolumn{6}{l}{\textit{Panel C. Conflict}} \\
\addlinespace[1pt]
Single MNL & 1012.3 (17.5) & 4.73 (1.82) & -- & 0.000 (0.000) & 1.0 (0.0) \\
GCMT-MNL & 868.7 (17.8) & 4.69 (1.71) & 0\% & 0.709 (0.043) & 8.6 (1.3) \\
OCMT-MNL & \textbf{845.7 (17.9)} & \textbf{3.13 (2.71)} & \textbf{100\%} & \textbf{0.969 (0.016)} & \textbf{5.0 (0.0)} \\
\bottomrule
\end{tabular*}}
{Each panel contains 10 paired replications; parentheses give standard deviations. Revenue loss is evaluated under ground-truth demand after optimizing prices by~\eqref{eq:pricing_fixed_point}, with bounds from zero to twice the base prices. Bold indicates the better tree method.}
\end{table}

The Aligned regime is the natural control. Both tree methods select~$x_1$ at the root in every replication. GCMT-MNL nevertheless uses 6.2 leaves on average and attains an ARI of 0.858, whereas OCMT-MNL recovers exactly five leaves and attains an ARI of 0.969. Relative to GCMT-MNL, OCMT-MNL lowers OOS NLL by 1.48 and revenue loss by 0.78 percentage points, with 95\% paired-$t$ intervals $[-5.29,8.25]$ and $[-0.09,1.64]$. When local and global split criteria agree, the main benefit of global construction is therefore a cleaner representation of the underlying segmentation.

The distinction sharpens in the Transition regime. GCMT-MNL selects~$x_1$ in five replications and~$d_1$ in five, uses 8.1 leaves on average, and attains an ARI of 0.715. OCMT-MNL retains the true root and five-leaf structure in every replication. Relative to GCMT-MNL, it reduces OOS NLL by 12.33 (95\% interval $[0.40,24.26]$) and revenue loss by 1.35 percentage points ($[0.39,2.32]$).

In the Conflict regime, GCMT-MNL selects~$d_1$ at the root in all 10 replications and grows to 8.6 leaves. OCMT-MNL again selects the true root feature and returns five leaves, with an ARI of 0.969. Its OOS NLL advantage rises to 22.97 ($[11.19,34.74]$), and its revenue loss is lower by 1.56 percentage points ($[0.15,2.97]$). The additional greedy leaves recover part of the predictive fit lost at the root, but not the true segmentation or its downstream pricing performance.

Across the three regimes, the value of global construction grows with the conflict between a locally attractive split and the best depth-limited tree. This pattern complements Theorem~\ref{thm:likelihood_pricing}: likelihood controls revenue loss within a correctly specified leaf, while tree construction determines which observations share that leaf.

The experiments in this subsection use a known MNL tree as the ground truth. Appendix~\ref{app:airline_oos} extends the comparison to the historical data used to train the market-specific models deployed in Section~\ref{sec:field}. OCMT-MNL attains lower OOS NLL than GCMT-MNL in 35 markets and a statistically significant improvement on average.

\section{Field Experiment in Ancillary Seat Pricing}\label{sec:field}
Computational and synthetic evidence does not establish whether the complete policy improves decisions in practice. We therefore deployed market-specific OCMT-MNL pricing policies in a 23-week randomized experiment across 48 airline markets. The analysis explains how the offline models were converted into online prices, defines the experimental estimand, and then examines the revenue effect and the changes in seat purchases behind it.

\subsection{Deployment Setting}\label{sec:field_setting}
We partnered with a major low-cost carrier in the Asia-Pacific region to deploy OCMT-MNL in ancillary seat selection pricing. The carrier offers nine seat-group products, shown schematically in Figure~\ref{fig:seatmap}. Groups~1--3 are premium seats and groups~4--9 are standard seats, with static prices decreasing by group index. Passengers may purchase advance seat selection or have a seat assigned at check-in. A change in one group's fee can shift demand to another group or away from paid seat selection altogether. The pricing policy therefore sets the nine fees jointly, using the customer and trip features available at each request.

\begin{figure}[!htb]
\centering
\includegraphics[width=0.97\linewidth]{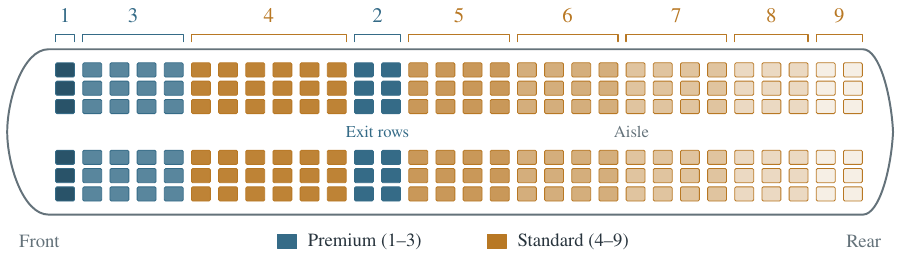}
\caption{Schematic layout of the nine seat groups. Darker shades indicate higher static prices.\label{fig:seatmap}}
\end{figure}

The carrier's Ancillary Pricing Optimization (APO) system supports real-time price experiments across digital sales channels. We train one OCMT-MNL model for each of 48 origin--destination (OD) markets using historical data from November~2024 through November~2025, before the randomized test period. These training data were collected in an earlier randomized pricing experiment. Each record includes the offered seat prices, the purchase outcome, and structural session features such as days to departure (DTD), offer stage, and the ticket fare tier; other features are withheld at the carrier's request. The offer-stage indicator distinguishes ticket booking from a post-booking revisit.\endnote{The estimation features exclude personal identifiers, device fingerprints, and cross-visit behavioral tracking.}

The deployed models use depth $D=3$, four split bins per continuous feature, the minimum leaf-size rule in Section~\ref{sec:synth_efficiency}, and $\lambda=\lambda_{\textup{AIC}}$. The depth cap reflects an operational constraint: maintaining more than eight customer segments is difficult in a live pricing system. For lookup-table construction, the nominal price bounds are 95\% and 115\% of the stage-specific static prices.\endnote{Both bounds are rounded to the nearest whole price unit before optimization, and the resulting lookup prices are rounded to the same unit. Percentage adjustments may therefore extend beyond the nominal interval.} Each request is routed through its market-specific tree and priced from the leaf-indexed table in Section~\ref{sec:lookup}; no nonlinear optimization is solved online.

The trees fitted for the OOS comparison have a median of five leaves across the 48 markets. Days to departure and offer stage account for about 40\% and 26\% of the selected splits, respectively, and offer stage is the root feature in 27 markets. These frequencies describe the learned segmentations rather than causal feature effects.

The full-data leaf models used for deployment translate customer segmentation into differentiated prices. For each market--product pair with at least three leaves and variation in both measures, we compute the rank correlation across leaves between estimated price sensitivity and the average ratio of recommended prices to static prices. Among the 360 such pairs, the correlation is negative in 98.1\% of cases and has a median of $-0.80$. The median range of the price ratio across leaves is 14.2 percentage points. Appendix~\ref{app:segmentation} reports the definitions and cross-market distributions.

Figure~\ref{fig:segment_price_profiles} illustrates the fitted model and its prices for one market. Offer stage is the root split. During ticket booking, the tree uses trip duration and then days to departure or party size. For post-booking revisits, it instead uses ticket fare and trip type. The variables that segment demand therefore differ across the two purchase stages.

Each leaf in panel~(a) reports its share of historical observations and its demand elasticities for premium and standard seats at the static prices. Panel~(b) pools the corresponding lookup-table price adjustments across observations and products within each seat group. Segments~1, 3, 4, and~6 have relatively elastic demand in at least one group and generally receive discounts. Standard demand is less elastic in segments~2 and~7, where standard seats receive sizable markups. Segment~5 recommends a markup for one premium seat product and discounts for the other two, producing the wide premium-price distribution. The group elasticities alone do not determine the price recommendations, which are chosen jointly across all nine products under the price bounds.

\begin{figure}[!htb]
\centering
\includegraphics[width=0.99\linewidth]{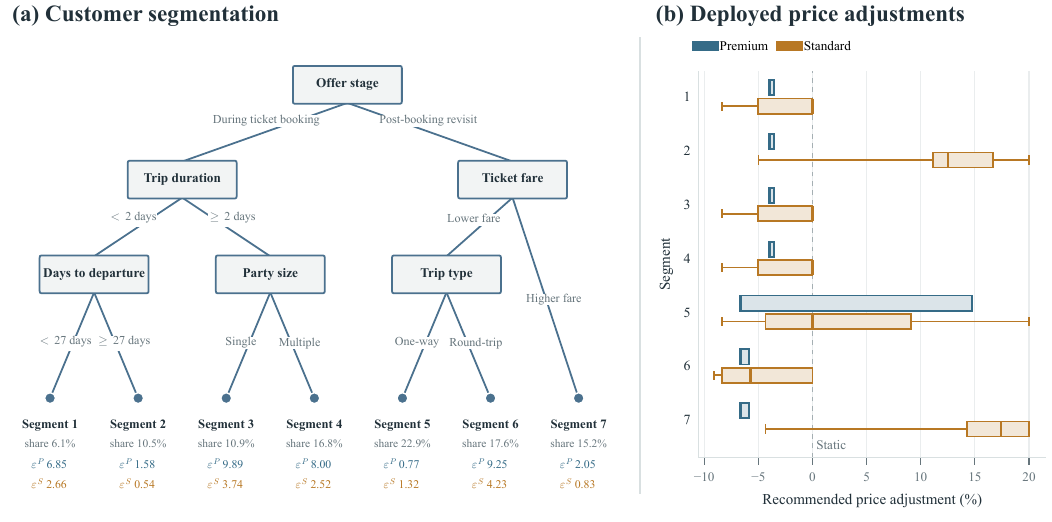}
\caption{Segmentation and deployed prices in one illustrative OD market. \textbf{(a)}~Fitted tree with segment shares and demand elasticities. \textbf{(b)}~Price adjustments across observations and seat products within each segment.\label{fig:segment_price_profiles}}
\end{figure}

\subsection{Experimental Design and Estimand}\label{sec:field_design}
From December~11, 2025 through May~20, 2026, the trained OD-specific models were evaluated in a live randomized test against the firm's static baseline prices. The APO system independently randomizes the displayed policy at each login, and the firm supplied a 10\% random sample of session logs. For each OD--booking pair, we attribute ancillary seat revenue to the policy at its earliest sampled login, the \emph{first observed assignment}. We retain pairs assigned to OCMT-MNL or the static baseline and exclude 60 pairs with conflicting policies at the same earliest timestamp. The resulting estimand is the reduced-form effect associated with that assignment, averaged over subsequent independently randomized exposures. Appendix~\ref{app:field_estimand} details the sampling mechanism.

Our primary business metric is revenue per passenger (RPP), defined as ancillary seat revenue divided by attributed passengers. For OD--booking pair~$u$, let $A_u\in\{\mathrm{OCMT\text{-}MNL},\mathrm{Static}\}$ be the first observed assignment, and let $\mathrm{Rev}_u$ and $\mathrm{Pax}_u$ be its ancillary seat revenue and passenger count. The pooled effect size is
\[
\widehat R
=
\frac{\sum_{u:A_u=\mathrm{OCMT\text{-}MNL}} \mathrm{Rev}_u}
     {\sum_{u:A_u=\mathrm{OCMT\text{-}MNL}} \mathrm{Pax}_u}
\bigg/
\frac{\sum_{u:A_u=\mathrm{Static}} \mathrm{Rev}_u}
     {\sum_{u:A_u=\mathrm{Static}} \mathrm{Pax}_u},
\]
reported after normalizing the static baseline to one. We form a 95\% confidence interval by resampling bookings within OD--assignment cells. For randomization inference, we permute assignments within OD--calendar-week strata while preserving the observed assignment counts. Market-level comparisons summarize the breadth and heterogeneity of the effect.

\subsection{Revenue Impact}\label{sec:field_results}
Over the test period, the sample attributed by first observed assignment contains 168{,}034 passengers attributed to OCMT-MNL and 22{,}186 attributed to the static baseline. Their RPP values are 1.170 and 1.051 Singapore dollars (SGD), respectively, yielding an RPP ratio of 1.113 and a lift of 11.3\%. The 95\% bootstrap interval for the lift is $[1.2\%,22.9\%]$, and the conditional randomization test gives a two-sided $p$-value of 0.027. Figure~\ref{fig:results}(a) reports the aggregate comparison.

OCMT-MNL has higher RPP in 35 of the 48 OD markets. A two-sided sign test gives $p=0.002$, and a Wilcoxon signed-rank test on the 47 finite market-level log ratios gives $p=0.016$.\endnote{The static baseline records zero ancillary revenue in one market, so its multiplicative ratio is undefined for the signed-rank test. The market remains in the sign test.} The median market lift is 25.5\%, showing that the aggregate result is broadly distributed rather than driven by one large market (Figure~\ref{fig:results}(b)).

\begin{figure}[H]
\centering
\includegraphics[width=0.84\linewidth]{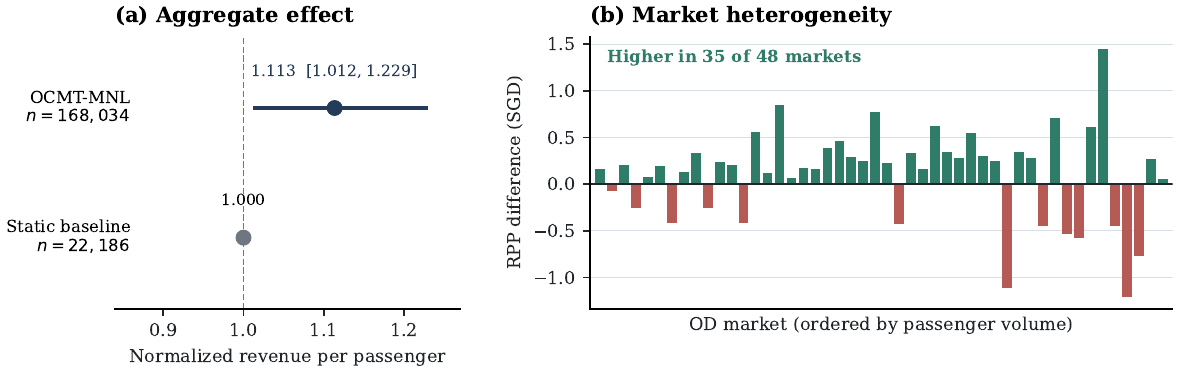}
\caption{Revenue results across the 48~OD markets. \textbf{(a)}~Normalized revenue per passenger (RPP), with the static baseline normalized to one, a 95\% bootstrap interval, and attributed passenger counts. \textbf{(b)}~Market-level RPP differences (OCMT-MNL $-$ static baseline), ordered by market volume.\label{fig:results}}
\end{figure}

\subsection{Revenue Decomposition and Premium-Seat Demand}\label{sec:field_pricing}
For assignment~$a$, let $P_a$ be the number of attributed passengers, $P_a^+$ the number in bookings with positive seat revenue, and $\mathrm{Rev}_a$ total ancillary seat revenue. The following accounting identity separates the share of passengers in positive-revenue bookings from their average realized revenue:
\[
\mathrm{RPP}_a
=
\underbrace{\frac{P_a^+}{P_a}}_{\text{positive-revenue share}}
\times
\underbrace{\frac{\mathrm{Rev}_a}{P_a^+}}_{\text{conditional revenue per passenger}}.
\]
With attribution based on the first observed assignment, the ratio of RPP for the OCMT-MNL assignment group to that for the static-baseline assignment group satisfies $1.1134=1.1074\times1.0055$.
Thus the share of attributed passengers in positive-revenue bookings is 10.7\% higher under OCMT-MNL, whereas realized revenue per passenger in those bookings differs by only 0.5\%. Most of the aggregate RPP difference is therefore associated with the first factor. The product-level counts below show that this coincides with more seats sold per passenger, especially in premium groups.

The difference is concentrated in premium seat groups. Under the static baseline, groups~1--3 account for 11.6\% of seats sold and 25.9\% of seat revenue; their realized revenue per seat sold is 2.66 times that of standard groups. Relative to the baseline, OCMT-MNL increases premium seats sold per passenger by 24.5\% and premium revenue per passenger by 27.4\%. The corresponding increases for standard groups are 5.5\% and 5.7\%. The premium share rises to 13.4\% of seats sold and 29.6\% of seat revenue, while realized revenue per premium seat sold is 2.4\% higher. The field evidence therefore points to a reallocation of purchases toward high-value seat groups, not a gain obtained by reducing their realized unit revenue.

The model diagnostics in Section~\ref{sec:field_setting} show how estimated price sensitivities are reflected in recommended prices. The experiment records more seats sold per passenger, with larger relative gains in premium groups. Randomization identifies the policy's overall effect, not separate effects of individual splits, parameters, or price changes.

\begin{figure}[htb]
\centering
\includegraphics[width=0.76\linewidth]{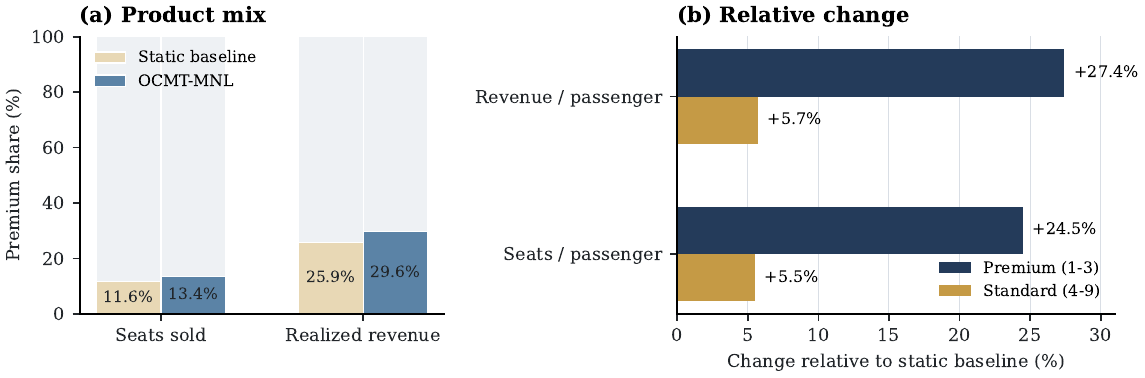}
\caption{Premium seat groups (1--3) versus standard groups (4--9). \textbf{(a)}~Shares of seats sold and realized seat revenue under the two policies. \textbf{(b)}~Relative changes in seats sold and realized revenue per passenger under OCMT-MNL.\label{fig:premium}}
\end{figure}

\section{Conclusions}\label{sec:conclusions}

This paper develops OCMT-MNL for learning compact and interpretable customer segmentations in feature-based multi-product pricing. Exact model-tree estimation is difficult because each candidate leaf requires substantial numerical optimization. Our algorithm makes leaf estimation part of the Bellman search instead of treating the fitted leaf cost as a black box. Intermediate primal-dual information yields valid lower bounds before a leaf fit is complete. After an unsuccessful pruning check, optimization resumes from the same solver state. Cross-set propagation then transfers this information across nested and disjoint datasets, allowing work on one leaf problem to prevent another leaf fit from starting.

The computational ablation confirms the importance of this interaction. Across all instances, the full algorithm eliminates 57.7\% of candidate leaves before their first MNL context evaluation, reduces exact leaf fits by 99.98\% and context evaluations by 86.13\%, and yields speedups of up to 7.15$\times$. Global construction produces more accurate, compact segmentations in synthetic experiments and improves predictive fit on real data. Separately, the randomized field experiment compares the complete OCMT-MNL pricing policy with the airline's static baseline. Across 48 markets and 190{,}220 passengers, OCMT-MNL increases seat revenue per passenger by 11.3\%. The increase is associated primarily with a higher share of passengers in positive-revenue bookings and more premium seats sold per passenger.

The present results rely on specific features of the MNL leaf model. The common feature coefficient reduces within-leaf pricing to a scalar problem. With product-specific feature effects, MNL estimation would remain convex, but pricing at deployment would generally become multidimensional. The model also assumes independence of irrelevant alternatives. Convexity alone, however, is not sufficient for our pruning approach. To apply the same architecture to another leaf model, intermediate solver iterates must provide valid lower bounds, and those bounds must remain transferable across nested and disjoint samples. We establish these properties for constrained MNL leaves. Whether richer choice models share them remains to be studied.

OCMT-MNL also clarifies why global tree construction matters. A shallow tree has only a few splits with which to represent customer heterogeneity. Greedy construction commits those splits one at a time, whereas global construction chooses how to use them jointly. Cross-set propagation makes this global search computationally practical even when every candidate leaf requires numerical estimation. The result is a tree that remains compact enough to inspect while retaining the demand structure needed for pricing.

\section*{Funding}
This research was supported by the National Research Foundation, Singapore and A*STAR, under its RIE2020 Industry Alignment Fund-Industry Collaboration Projects (IAF-ICP) Grant Call (Grant No. I2001E0059), SIA-NUS Digital Aviation Corp Lab.

\theendnotes


\clearpage
\appendix
\pdfbookmark[0]{Appendices}{appendices}
\section*{\centering\LARGE Appendices}

\section{Algorithms}\label{app:algorithms}

The Bellman recursion, leaf fitting, and split evaluation follow the same return rule under an incoming cap. Cross-set propagation updates the stored bounds used by these calls. This appendix gives the complete schedule summarized in Figure~\ref{fig:full_pruned_schedule} and the conventions needed to read its routines.

\begingroup
\raggedbottom
\setlength{\textfloatsep}{5pt}
\setlength{\floatsep}{5pt}
\setlength{\intextsep}{5pt}
\makeatletter
\newcommand{\AlgBlock}[2]{%
  \par\vspace{0.45em}%
  \hrule height 0.55pt\nobreak\vskip4pt%
  \refstepcounter{algorithm}\label{#1}%
  \noindent\textbf{Algorithm~\thealgorithm}\ #2\par
  \nobreak\vskip4pt\hrule height 0.35pt\nobreak\vskip3pt%
}
\newcommand{\EndAlgBlock}{%
  \par\vspace{0.55em}%
}
\makeatother
\subsection{Overview and Conventions}

Algorithm~\ref{alg:pruned_dp} is the incumbent-capped counterpart of
Algorithm~\ref{alg:dp_compact_final}; the remaining routines supply its leaf,
split, and propagation operations. They share $\lambda$, $\Theta$, and a fixed
action order, with the leaf action first and the splits in lexicographic order;
this order also breaks ties. For a
dataset~$\mathcal D_s$, $\mathcal S$ always denotes its observation index set.
The root call is \textsc{SolveState}$(\mathcal D,0,+\infty)$.

Three conventions govern the implementation. First, in a capped call, $U$ is
the best cost that the caller can still improve. For an object with exact value~$Q$, the call returns
$(\textsf{EXACT},Q)$ if $Q<U$; otherwise it returns
$(\textsf{PRUNED},L)$ with $U\le L\le Q$. A \textsf{PRUNED} return therefore
certifies that the object cannot improve the incumbent, while retaining the
computed lower bound for later calls. Second, the persistent store has three
parts: exact state and action values, valid action and state lower bounds, and
one resumable constrained Newton path for each index set. Third, all unopened
feasible states start from the nonnegativity bound $L_V=\lambda N_0$; an
infeasible state starts from $+\infty$. When an index set~$\mathcal S$ is first
registered, set $B_{\mathrm{dir}}(\mathcal S)=B_{\mathrm{set}}(\mathcal S)=0$
and hence $L_{\textsf{leaf}}(\mathcal S)=\lambda N_0$. Thereafter,
\[
\begin{aligned}
B(\mathcal S)&=\max\{B_{\mathrm{dir}}(\mathcal S),B_{\mathrm{set}}(\mathcal S)\},
&L_{\textsf{leaf}}(\mathcal S)&=\lambda N_0+B(\mathcal S),\\
L_f&=L_V(\mathcal D_s^{f=0},d+1)+L_V(\mathcal D_s^{f=1},d+1),
&L_V(\mathcal D_s,d)&=\min_{a\in\bar{\mathcal A}(\mathcal D_s,d)}\{L_a\}.
\end{aligned}
\]

Algorithm~\ref{alg:eval_split} uses residual caps because $Q_f=Q_0+Q_1$.
Given $L_1\le Q_1$, an improving split requires $Q_0<U-L_1$; after child~0
returns the exact value~$z_0$, it requires $Q_1<U-z_0$. Failure at either cap
therefore prunes the split. The routines interact as follows.

\begin{center}
\begin{tabularx}{0.96\linewidth}{@{}>{\bfseries}lXl@{}}
\toprule
Routine & Role & Calls \\
\midrule
\textsc{SolveState} & Minimizes over the leaf and split actions at one Bellman state. & \textsc{LeafFit}, \textsc{EvalSplit} \\
\textsc{LeafFit} & Tests the current leaf bound and, if needed, advances the same constrained Newton path. & \textsc{PropagateBound} \\
\textsc{EvalSplit} & Uses child-state bounds and capped recursive calls to evaluate one split. & \textsc{SolveState} \\
\textsc{PropagateBound} & Transfers a new direct bound through registered subset and disjoint-union relations. & --- \\
\bottomrule
\end{tabularx}
\end{center}

\medskip

A set is registered when its first Bellman state is opened. The registry stores
its index set, every verified proper inclusion with an already registered set,
and every disjoint union created by an explored split. Adding a relation places
its source set or sets in the propagation queue. The resulting dependency graph
runs from set bounds to leaf-action bounds, then from child-state bounds to
split-action bounds and parent-state bounds. Algorithm~\ref{alg:propagate_bound}
updates this graph whenever a source bound increases. State registration is
separate: whenever $(\mathcal D_s,d)$ is first opened, even if $\mathcal S$ is
already known at another depth, its leaf and split actions are initialized from
the current set and child-state bounds before any pruning test.

The pseudocode and its correctness argument are stated in exact arithmetic. In
particular, an accepted Fenchel value satisfies the QP KKT system and
$\bm q^t\in\Delta_{\mathcal J}^{|\mathcal S|}$. The numerical implementation
checks the corresponding QP residuals before using a bound.
$\mathcal A(\mathcal D_s,d)$ contains only splits whose children meet the
minimum leaf size. A leaf path starts from the fitted parent parameter when it
is available and otherwise from the fixed feasible point specified in
Section~\ref{sec:estimation}.

\subsection{Capped Bellman Search}

\AlgBlock{alg:pruned_dp}{Capped Bellman recursion}
\begin{algorithmic}[1]
\Require The model and search inputs of Algorithm~\ref{alg:dp_compact_final}
\Ensure Optimal tree $\mathbb{T}^*$ with leaf parameters $\{\bm{\theta}^*_k\}$
\Statex
\Function{SolveState}{$\mathcal{D}_s,d,U$}
\State $U_{\mathrm{in}}\leftarrow U$
\If{$|\mathcal{D}_s|<N_{\min}$} \Return $(\textsf{PRUNED},+\infty)$
\EndIf
\State Register state $(\mathcal D_s,d)$, its actions, $\mathcal S$, and the verified set relations
\State Initialize $L_{\textsf{leaf}}(\mathcal S)=\lambda N_0+B(\mathcal S)$ and each $L_f$ from its two child-state bounds
\State $L_V(\mathcal D_s,d)\leftarrow
\min_{a\in\bar{\mathcal A}(\mathcal D_s,d)}\{L_a\}$; propagate the newly activated relations
\If{the exact state value $v$ is cached}
\If{$v<U$} \Return $(\textsf{EXACT},v)$ \Else\ \Return $(\textsf{PRUNED},v)$
\EndIf
\EndIf
\If{$L_V(\mathcal D_s,d)\ge U$} \Return $(\textsf{PRUNED},L_V(\mathcal D_s,d))$
\EndIf
\State $v^*\leftarrow+\infty$;\quad $a^*\leftarrow\varnothing$
\State $(\tau,v_{\textsf{leaf}})\leftarrow\Call{LeafFit}{\mathcal{D}_s,U}$
\State $L_{\textsf{leaf}}(\mathcal S)\leftarrow
\max\{L_{\textsf{leaf}}(\mathcal S),v_{\textsf{leaf}}\}$
\State $L_V(\mathcal D_s,d)\leftarrow
\min_{a\in\bar{\mathcal A}(\mathcal D_s,d)}\{L_a\}$
\If{$\tau=\textsf{EXACT}$}
\State Cache $Q_{\textsf{leaf}}=v_{\textsf{leaf}}$;
$v^*\leftarrow v_{\textsf{leaf}}$;
$a^*\leftarrow\textsf{Leaf}$;
$U\leftarrow\min\{U,v^*\}$
\EndIf
\If{$d<D$}
\ForAll{$f\in\mathcal{A}(\mathcal{D}_s,d)$ in the prescribed order}
\If{$L_V(\mathcal D_s,d)\ge U$} \textbf{break}
\EndIf
\State $(\tau,v_f)\leftarrow\Call{EvalSplit}{\mathcal{D}_s^{f=0},\mathcal{D}_s^{f=1},d+1,U}$
\State $L_f\leftarrow\max\{L_f,v_f\}$
\If{$\tau=\textsf{EXACT}$}
\State Cache $Q_f=v_f$;\quad $L_f\leftarrow v_f$
\If{$v_f<v^*$}
\State $v^*\leftarrow v_f$;\quad $a^*\leftarrow f$;\quad $U\leftarrow v^*$
\EndIf
\EndIf
\State $L_V(\mathcal D_s,d)\leftarrow
\min_{a\in\bar{\mathcal A}(\mathcal D_s,d)}\{L_a\}$
\EndFor
\EndIf
\State $L_V(\mathcal D_s,d)\leftarrow
\min_{a\in\bar{\mathcal A}(\mathcal D_s,d)}\{L_a\}$
\If{$v^*<U_{\mathrm{in}}$}
\State Cache exact value $(v^*,a^*)$ for $(\mathcal{D}_s,d)$; \Return $(\textsf{EXACT},v^*)$
\Else
\State Cache $L_V(\mathcal D_s,d)$; \Return $(\textsf{PRUNED},L_V(\mathcal D_s,d))$
\EndIf
\EndFunction
\Statex
\State $(\textsf{EXACT},V^*)\leftarrow\Call{SolveState}{\mathcal{D},0,+\infty}$
\State Recover $(\mathbb{T}^*,\{\bm{\theta}^*_k\})$ by backtracking cached exact actions
\end{algorithmic}
\EndAlgBlock

\subsection{Leaf and Split Evaluation}

\AlgBlock{alg:progressive_leaf_fit}{Pathwise leaf evaluation}
\begin{algorithmic}[1]
\Function{LeafFit}{$\mathcal{D}_s,U$}
\If{exact leaf cost $H$ is cached}
\If{$H<U$} \Return $(\textsf{EXACT},H)$ \Else\ \Return $(\textsf{PRUNED},H)$
\EndIf
\EndIf
\State $\underline H\leftarrow L_{\textsf{leaf}}(\mathcal S)$
\If{$\underline H\ge U$} \Return $(\textsf{PRUNED},\underline H)$
\EndIf
\State Initialize the path, or resume it by taking the pending stored step
\State Let $(\bm\theta_t,t)$ be the first checkpoint not previously evaluated
\Loop
\State Compute probabilities, gradient $\bm g_t$, and Hessian $\bm G_t$ at $\bm\theta_t$
\State Solve~\eqref{eq:constrained_newton_qp} once for $(\bm d_t,\bm\mu_t)$
\If{the leaf solver has converged}
\State Cache $H(\mathcal D_s)$ and $\bm\theta^*(\mathcal D_s)$; set $B_{\mathrm{dir}}(\mathcal S)\leftarrow H(\mathcal D_s)-\lambda N_0$
\State \Call{PropagateBound}{$\{\mathcal S\}$}
\If{$H(\mathcal D_s)<U$} \Return $(\textsf{EXACT},H(\mathcal{D}_s))$ \Else\ \Return $(\textsf{PRUNED},H(\mathcal D_s))$
\EndIf
\EndIf
\State Use the damped line search of Section~\ref{sec:estimation} to obtain a feasible step $\alpha_t\bm d_t$
\State Form $\bm q^t$ using~\eqref{eq:newton_linear_q}
\If{$\bm q^t\in\Delta_{\mathcal J}^{|\mathcal S|}$}
\State $B_t\leftarrow\mathcal H_{\mathcal S}(\bm q^t)-\bm b_\Theta^\top\bm\mu_t$
\State $B_{\mathrm{dir}}(\mathcal S)\leftarrow\max\{B_{\mathrm{dir}}(\mathcal S),B_t\}$; \quad \Call{PropagateBound}{$\{\mathcal S\}$}
\State $\underline H\leftarrow L_{\textsf{leaf}}(\mathcal S)$
\If{$\underline H\ge U$}
\State Store checkpoint $t$, its MNL context, and its feasible damped step
\State \Return $(\textsf{PRUNED},\underline H)$
\EndIf
\EndIf
\State Store checkpoint $t$; set $\bm\theta_{t+1}\leftarrow\bm\theta_t+\alpha_t\bm d_t$ and $t\leftarrow t+1$
\EndLoop
\EndFunction
\end{algorithmic}
\EndAlgBlock

\AlgBlock{alg:eval_split}{Capped split evaluation}
\begin{algorithmic}[1]
\Function{EvalSplit}{$\mathcal D_0,\mathcal D_1,d,U$}
\State Register $\mathcal S_0$, $\mathcal S_1$, and
$\mathcal S=\mathcal S_0\mathbin{\dot\cup}\mathcal S_1$; propagate the new union relation
\State Retrieve valid child-state bounds $L_0$ and $L_1$
\If{$L_0+L_1\ge U$} \Return $(\textsf{PRUNED},L_0+L_1)$
\EndIf
\State $(\tau_0,z_0)\leftarrow\Call{SolveState}{\mathcal D_0,d,U-L_1}$
\If{$\tau_0=\textsf{PRUNED}$} \Return $(\textsf{PRUNED},z_0+L_1)$
\EndIf
\State $(\tau_1,z_1)\leftarrow\Call{SolveState}{\mathcal D_1,d,U-z_0}$
\If{$\tau_1=\textsf{PRUNED}$} \Return $(\textsf{PRUNED},z_0+z_1)$
\EndIf
\State \Return $(\textsf{EXACT},z_0+z_1)$
\EndFunction
\end{algorithmic}
\EndAlgBlock

\subsection{Cross-Set Bound Update}

\AlgBlock{alg:propagate_bound}{Cross-set propagation}
\begin{algorithmic}[1]
\Function{PropagateBound}{$\textsf{SetQueue}$}
\Comment{sets with a larger bound or a new outgoing relation}
\State $\textsf{StateQueue}\leftarrow\varnothing$
\While{$\textsf{SetQueue}$ is nonempty}
\State Remove a set $\mathcal E$ from $\textsf{SetQueue}$
\ForAll{registered proper inclusions $\mathcal E\subsetneq\mathcal S'$}
\If{$B(\mathcal E)>B_{\mathrm{set}}(\mathcal S')$}
\State $B_{\mathrm{set}}(\mathcal S')\leftarrow B(\mathcal E)$; add $\mathcal S'$ to $\textsf{SetQueue}$
\EndIf
\EndFor
\ForAll{registered unions $\mathcal S'=\mathcal E_0\mathbin{\dot\cup}\mathcal E_1$ affected by $\mathcal E$}
\State $c\leftarrow B(\mathcal E_0)+B(\mathcal E_1)$
\If{$c>B_{\mathrm{set}}(\mathcal S')$}
\State $B_{\mathrm{set}}(\mathcal S')\leftarrow c$; add $\mathcal S'$ to $\textsf{SetQueue}$
\EndIf
\EndFor
\ForAll{registered states $(\mathcal D_e,d)$ with index set $\mathcal E$}
\State $L_{\textsf{leaf}}(\mathcal E)\leftarrow\lambda N_0+B(\mathcal E)$; add $(\mathcal D_e,d)$ to $\textsf{StateQueue}$
\EndFor
\EndWhile
\While{$\textsf{StateQueue}$ is nonempty}
\State Remove a state $(\mathcal D_s,d)$ from $\textsf{StateQueue}$
\State Recompute each split bound $L_f$ as the sum of its two child-state bounds
\State $L'\leftarrow\min_{a\in\bar{\mathcal A}(\mathcal D_s,d)}\{L_a\}$
\If{$L'>L_V(\mathcal D_s,d)$}
\State $L_V(\mathcal D_s,d)\leftarrow L'$; add every registered parent state using this child to $\textsf{StateQueue}$
\EndIf
\EndWhile
\EndFunction
\end{algorithmic}
\EndAlgBlock

Algorithm~\ref{alg:progressive_leaf_fit} uses one constrained Newton QP
solution for the convergence test, the direct Fenchel bound, and the next SQP
step. Its path record contains the current iterate, the evaluated MNL context,
the QP direction and multiplier, and any pending damped step; resumption applies
that step once and begins at the first unevaluated checkpoint.
Algorithm~\ref{alg:propagate_bound} then applies
Proposition~\ref{prop:cross_set_propagation} to the relations registered by the
Bellman search. Consequently, a failed leaf check advances the exact fit
without recomputing a context, while a propagated bound can remove a different
leaf before its first context evaluation.

\endgroup

\section{Proofs}\label{app:proofs}

The arguments proceed from estimation to deployment. We first establish the exact Bellman recursion and the leaf-level Fenchel bounds, then show how these bounds propagate without invalidating the search. The remaining proofs characterize within-leaf pricing, likelihood-based pricing loss, and the lookup approximation.

\subsection{Proof of Theorem~\ref{thm:dp}}\label{app:dp}

\begin{proof}[Proof of Theorem~\ref{thm:dp}]

\textbf{Optimality.}
We prove by induction on $D - d$ that $V(\mathcal{D}_s, d)$ equals the optimal
objective of \textup{[OCMT-MNL-EST]} restricted to $\mathcal{D}_s$ and
$\mathfrak{T}(\mathcal{F}', D-d, N_{\min})$. At $d = D$ (base case), the only
feasible tree is a single leaf with cost $H(\mathcal{D}_s)$, which is evaluated
at its global minimum; if $|\mathcal{D}_s| < N_{\min}$, both the algorithm and the
problem assign $+\infty$. For $d < D$ (inductive step), any feasible tree either
(a)~is a single leaf with cost $H(\mathcal{D}_s)$, or (b)~splits on some
$f \in \mathcal{F}'$, partitioning $\mathcal{D}_s$ into
$\mathcal{D}_s^{f=0}$ and $\mathcal{D}_s^{f=1}$. Because the loss decomposes
additively across disjoint subtrees,
\[
L(\mathbb{T}, \{\bm{\theta}_k\} \mid \mathcal{D}_s)
=
L(\mathbb{T}_0, \{\bm{\theta}_k\}_{k\in\mathcal{K}_0} \mid \mathcal{D}_s^{f=0})
+
L(\mathbb{T}_1, \{\bm{\theta}_k\}_{k\in\mathcal{K}_1} \mid \mathcal{D}_s^{f=1}),
\]
and the inductive hypothesis ensures each subtree value is optimal. The Bellman
recursion exhaustively evaluates every $f \in \mathcal A(\mathcal D_s,d)$ and selects the
minimum over leaf and split actions, so $V(\mathcal{D}_s, d)$ is globally
optimal for state $(\mathcal{D}_s, d)$. By induction, $V(\mathcal{D}, 0)$ is
optimal over $\mathfrak{T}(\mathcal{F}', D, N_{\min})$, and backtracking
recovers $\mathbb{T}^*$ and $\{\bm{\theta}^*_k\}$. Memoization ensures each
state is solved exactly once without affecting optimality, since the value
depends only on $(\mathcal{D}_s, d)$, not on the path by which it was reached.

\textbf{Complexity.}
We bound the total running time by counting the number of distinct states
visited and the per-state computation cost.

\textbf{Counting distinct states.}
A state at depth $d$ is determined by the pair $(\mathcal{D}_s, d)$, where the
subset $\mathcal{D}_s$ is the set of observations whose binary feature
vector~$\bm{x}'$ satisfies a conjunction of $d$ conditions of the form
$x'_f = b_f$ for distinct features $f$ drawn from $\mathcal{F}'$ and bits
$b_f \in \{0,1\}$. The features along any root-to-node path are necessarily
distinct: since all elements of $\mathcal{F}'$ are binary, splitting on a
feature~$f$ that already appears on the ancestor path produces one child
identical to the parent and one empty child, which violates the
$N_{\min}\ge 1$ feasibility requirement. Furthermore, different orderings of the
same $d$~splits yield the same data subset; hence the subset is determined by
the choice of $d$~features and their values. The number of distinct subsets
at depth~$d$ is therefore at most
\[
\binom{|\mathcal{F}'|}{d} \cdot 2^d.
\]
Summing over all depths $d = 0, 1, \ldots, D$, the total number of distinct
states is at most
\[
\mathcal{N}_{\textup{DP}}
\;:=\;
\sum_{d=0}^{D} \binom{|\mathcal{F}'|}{d}\, 2^d.
\]
The constraint $D \le |\mathcal{F}'|$ is without loss of generality, since the
features along any root-to-leaf path are distinct (as argued above).

\textbf{Per-state computation.}
At each state $(\mathcal{D}_s, d)$, the algorithm performs two operations:
\begin{enumerate}
\item[(i)] \textbf{Leaf cost evaluation}: solving one segment-level MNL estimation problem on
$\mathcal{D}_s$ to obtain $H(\mathcal{D}_s)$ and $\bm{\theta}^*(\mathcal{D}_s)$.
For a state containing $n$ observations, each constrained Newton iteration
evaluates the gradient and Hessian in $O(nN_0^2)$ time, processes the
$m_\Theta$ linear constraints in $O(m_\Theta N_0)$ time, and solves one
convex QP. At tolerance~$\varepsilon$, the leaf cost is therefore
bounded by
\[
O\!\left(
\overline m_{\textup{Newt}}(N,\varepsilon)
\left\{NN_0^2+m_\Theta N_0+
\mathcal T_{\textup{QP}}(N_0,m_\Theta,\varepsilon)\right\}
\right).
\]
\item[(ii)] \textbf{Split enumeration}: for each candidate feature
$f \in \mathcal{F}'$, partitioning $\mathcal{D}_s$ into
$\mathcal{D}_s^{f=0}$ and $\mathcal{D}_s^{f=1}$. Each partition requires a
single pass over $\mathcal{D}_s$, costing $O(|\mathcal{D}_s|)$. Enumerating all
$|\mathcal{F}'|$~features costs $O(|\mathcal{F}'| \cdot |\mathcal{D}_s|) \le
O(|\mathcal{F}'| \cdot N)$.
\end{enumerate}

\textbf{Total complexity.}
Multiplying the state count by the per-state cost yields
\[
O\!\left(
\left\{\sum_{d=0}^{D}\binom{|\mathcal F'|}{d}2^d\right\}
\left[
\overline m_{\textup{Newt}}(N,\varepsilon)
\left\{NN_0^2+m_\Theta N_0+
\mathcal T_{\textup{QP}}(N_0,m_\Theta,\varepsilon)\right\}
+|\mathcal F'|N
\right]
\right).
\]
This proves the complexity statement in the theorem.

\textbf{Dependence on $N$.}
We emphasize that the sample size $N$ enters the complexity through the
per-state operations, not through the number of states. Although the set of
candidate binary features $\mathcal{F}'$ is fixed a priori (via a predetermined
discretization scheme), each state requires touching the data to (i)~fit an MNL
leaf model, and (ii)~partition the observations
for each candidate split.

\textbf{Two perspectives on tractability.}
For fixed $D$ and $|\mathcal{F}'|$, the state-count factor
$\sum_{d=0}^{D}\binom{|\mathcal{F}'|}{d}\,2^{d}$ is a constant, so the
dependence on sample size is governed by constrained leaf fitting and split
enumeration. This is the practically relevant regime, since $D$ is a small
modeler-chosen constant (typically 3--5) and $|\mathcal{F}'|$ is determined by
feature engineering and discretization. If $D$ grows with the input, the
state-count factor is exponential in~$D$.
\end{proof}

\subsection{Proof of Lemma~\ref{lem:fenchel_leaf_bound}}\label{app:fenchel_leaf}

\begin{proof}[Proof of Lemma~\ref{lem:fenchel_leaf_bound}]
For any probability vector $\bm q_i$, Fenchel--Young for log-sum-exp gives
\[
\log\sum_j\exp(\bm a_{ij}^{\top}\bm\theta)
\ge
\sum_j q_{ij}\bm a_{ij}^{\top}\bm\theta
-
\sum_j q_{ij}\log q_{ij}.
\]
Subtracting $\bm a_{iy_i}^{\top}\bm\theta$ and summing over $i\in\mathcal S$
yields
\[
\mathcal L_{\mathcal S}(\bm\theta)
\ge
\mathcal H_{\mathcal S}(\bm q)+r_{\mathcal S}(\bm q)^{\top}\bm\theta .
\]
For a dual-feasible pair,
$r_{\mathcal S}(\bm q)=-\bm A_\Theta^\top\bm\mu$. Hence every
$\bm\theta\in\Theta$ satisfies
\[
r_{\mathcal S}(\bm q)^\top\bm\theta
=-\bm\mu^\top\bm A_\Theta\bm\theta
\ge-\bm\mu^\top\bm b_\Theta.
\]
Taking the infimum over~$\Theta$ and adding $\lambda N_0$ gives
$H(\mathcal D_s)\ge\lambda N_0+\mathcal B_{\mathcal S}(\bm q,\bm\mu)$.
\end{proof}

\subsection{Proof of Proposition~\ref{prop:newton_linear_bound}}\label{app:closed_form_leaf_bound}

\begin{proof}[Proof of Proposition~\ref{prop:newton_linear_bound}]
Let
\[
\bm S_i^t
:=
\operatorname{Diag}(\bm\pi_i^t)-\bm\pi_i^t(\bm\pi_i^t)^\top,
\]
and let $\bm A_i$ collect the row vectors $\bm a_{ij}^\top$. Then
$\bm G_t=\sum_{i\in\mathcal S}\bm A_i^\top\bm S_i^t\bm A_i$.
Write $\bm\delta_i^t=\bm q_i^t-\bm\pi_i^t$. From
\eqref{eq:newton_linear_q},
\[
\bm\delta_i^t=\bm S_i^t\bm A_i\bm d_t.
\]
Because $\bm 1^\top\bm S_i^t=\bm0$, we have
$\sum_jq_{ij}^t=\sum_j\pi_{ij}^t=1$. Moreover,
\[
\begin{aligned}
r_{\mathcal S}(\bm q^t)
&=
\bm g_t+
\sum_{i\in\mathcal S}\bm A_i^\top\bm\delta_i^t \\
&=
\bm g_t+\bm G_t\bm d_t
=
-\bm A_\Theta^\top\bm\mu_t,
\end{aligned}
\]
where the last equality is the stationarity condition for
\eqref{eq:constrained_newton_qp}. For every $\bm\theta\in\Theta$,
\[
r_{\mathcal S}(\bm q^t)^\top\bm\theta
=
-\bm\mu_t^\top\bm A_\Theta\bm\theta
\ge
-\bm\mu_t^\top\bm b_\Theta.
\]
At the feasible QP endpoint $\widehat{\bm\theta}_t:=\bm\theta_t+\bm d_t$,
complementarity gives equality. Hence
\[
\inf_{\bm\theta\in\Theta}
r_{\mathcal S}(\bm q^t)^\top\bm\theta
=
-\bm b_\Theta^\top\bm\mu_t.
\]
When $\bm q^t$ is componentwise nonnegative, its components therefore form
probability vectors and Lemma~\ref{lem:fenchel_leaf_bound} yields the stated
lower bound.
\end{proof}

\subsection{Proof of Proposition~\ref{prop:local_accuracy}}\label{app:local_accuracy}

\begin{proof}[Proof of Proposition~\ref{prop:local_accuracy}]
Let $\bm\pi_i^*=\bm\pi_i(\bm\theta_{\mathcal S}^*)$ and
$\bm e_t=\bm\theta_t-\bm\theta_{\mathcal S}^*$. For any finite
$\bm\theta$, let
$\bm S_i(\bm\theta):=\operatorname{Diag}\{\bm\pi_i(\bm\theta)\}
-\bm\pi_i(\bm\theta)\bm\pi_i(\bm\theta)^\top$. The MNL Hessian is
\[
\bm G(\bm\theta)
=
\sum_{i\in\mathcal S}
\bm A_i^\top\bm S_i(\bm\theta)\bm A_i,
\]
and hence, for every direction~$\bm v$,
\[
\bm v^\top\bm G(\bm\theta)\bm v
=
\sum_{i\in\mathcal S}
\operatorname{Var}_{\bm\pi_i(\bm\theta)}
\!\left(\bm a_{ij}^\top\bm v\right).
\]
Every MNL probability is positive at finite~$\bm\theta$. The variance for
observation~$i$ is therefore zero if and only if
$\bm a_{ij}^\top\bm v$ is constant across alternatives. Because the
outside-option row is~$\bm a_{i0}=\bm0$, that constant must be zero. Thus
\begin{equation}\label{eq:fixed_mnl_null_space}
\operatorname{Null}\{\bm G(\bm\theta)\}
=
\bigcap_{i\in\mathcal S}\operatorname{Null}(\bm A_i)
=
\mathcal N_{\mathcal S}
\qquad\text{for every finite }\bm\theta.
\end{equation}
In particular, the Hessian annihilates~$\mathcal N_{\mathcal S}$ and, by
symmetry, does not couple it with identifiable directions.

On the stable face, $\bm e_t,\bm d_t\in\mathcal T_{\mathcal S}^*$.
Decompose each in
$\mathcal T_{\mathcal S}^*=\mathcal V_{\mathcal S}^*
\oplus\mathcal K_{\mathcal S}^*$, and denote the identifiable components by
$\bm e_t^V$ and $\bm d_t^V$. Since
$\mathcal K_{\mathcal S}^*\subseteq\mathcal N_{\mathcal S}$, its components
leave all utilities and probabilities unchanged. Equation
\eqref{eq:fixed_mnl_null_space} therefore gives
\[
\bm G_t\bm d_t=\bm G_t\bm d_t^V,
\qquad
\nu_t^2=(\bm d_t^V)^\top\bm G_t\bm d_t^V,
\]
and $\bm q^t$ depends only on~$\bm d_t^V$. Thus any nonuniqueness of the QP
direction within~$\mathcal K_{\mathcal S}^*$ is immaterial to the bound.
Moreover, $\bm G^*$ is positive
definite on~$\mathcal V_{\mathcal S}^*$; otherwise a nonzero vector would
belong to both~$\mathcal V_{\mathcal S}^*$ and
$\mathcal K_{\mathcal S}^*$. Continuity then gives $m_{\mathcal S}>0$ on a
sufficiently small neighborhood.

Let $\bm Z$ be as in the proposition. Local face stability and QP stationarity
give
\[
\bm Z^\top\{\bm g_t+\bm G_t\bm d_t\}=\bm0,
\]
because $\bm Z^\top\bm A_{\Theta,\mathcal I^*}^\top=\bm0$ and the QP
multipliers vanish outside~$\mathcal I^*$. Using
\eqref{eq:fixed_mnl_null_space}, this is precisely the ordinary Newton equation
on~$\mathcal V_{\mathcal S}^*$. First-order optimality along the active face
also gives $\bm Z^\top\nabla\mathcal L_{\mathcal S}
(\bm\theta_{\mathcal S}^*)=\bm0$. The integral form of the projected score
expansion therefore gives
\[
\|\bm e_t^V+\bm d_t^V\|_2
\le
\frac{L_{G,\mathcal S}}{2m_{\mathcal S}}\|\bm e_t^V\|_2^2.
\]
After shrinking~$\mathcal U$ if needed, this implies
$\|\bm e_t^V\|_2\le2\|\bm d_t^V\|_2$. Since
$\nu_t^2\ge m_{\mathcal S}\|\bm d_t^V\|_2^2$, it follows that
\begin{equation}\label{eq:newton_endpoint_constant}
\|\bm e_t^V+\bm d_t^V\|_2
\le
\frac{2L_{G,\mathcal S}}{m_{\mathcal S}^2}\nu_t^2.
\end{equation}
The preceding Newton estimate gives
$\bm d_t^V=-\bm e_t^V+O(\|\bm e_t^V\|_2^2)$. Continuity of the probability
map and its derivative then gives $\bm q_i^t\to\bm\pi_i^*$, so the
neighborhood can be shrunk until $\bm q^t$ is feasible throughout.

Since
$D\bm\pi_i(\bm\theta_t)[\bm v]=\bm S_i^t\bm A_i\bm v$, where
$\bm S_i^t:=\operatorname{Diag}(\bm\pi_i^t)-\bm\pi_i^t(\bm\pi_i^t)^\top$
and $\bm A_i$ collects the row vectors $\bm a_{ij}^\top$, the probability-map
remainder satisfies
\[
\|\bm q_i^t-\bm\pi_i(\bm\theta_t+\bm d_t^V)\|_2
\le
\frac{L_{\pi,\mathcal S}}{2}\|\bm d_t^V\|_2^2
\le
\frac{L_{\pi,\mathcal S}}{2m_{\mathcal S}}\nu_t^2.
\]
Combining this inequality with~\eqref{eq:newton_endpoint_constant} and the
definition of~$M_{\pi,\mathcal S}$ proves
\[
\|\bm q_i^t-\bm\pi_i^*\|_2
\le
\left(
\frac{L_{\pi,\mathcal S}}{2m_{\mathcal S}}
+
\frac{2M_{\pi,\mathcal S}L_{G,\mathcal S}}{m_{\mathcal S}^2}
\right)\nu_t^2.
\]

Finally, applying~\eqref{eq:constrained_gap_identity} at
$\bm\theta_{\mathcal S}^*$ and using Proposition~\ref{prop:newton_linear_bound}
yields the exact constrained gap
\[
H(\mathcal D_s)-\mathrm{LB}_t(\mathcal D_s)
=
\sum_{i\in\mathcal S}
\mathrm{KL}\!\left(\bm q_i^t\,\middle\|\,\bm\pi_i^*\right)
+
\bm\mu_t^\top
(\bm b_\Theta-\bm A_\Theta\bm\theta_{\mathcal S}^*).
\]
Local face stability makes the multiplier term zero. KL divergence is locally
quadratic around the positive vector $\bm\pi_i^*$. More precisely,
$\mathrm{KL}(\bm q_i^t\|\bm\pi_i^*)
\le\|\bm q_i^t-\bm\pi_i^*\|_2^2/\pi_{\min,\mathcal S}^*$.
Summing the resulting inequalities proves the stated fourth-order bound, and
convergence follows immediately.
\end{proof}

\subsection{Proof of Proposition~\ref{prop:cross_set_propagation}}\label{app:cross_set_propagation}

\begin{proof}[Proof of Proposition~\ref{prop:cross_set_propagation}]
Consider part~(ii). The construction assigns exactly one probability vector to
every observation in~$\mathcal S$. On observations not covered by
$\mathcal G$, $\widetilde{\bm q}_i=\bm y_i$ has zero entropy and zero score
residual. Since the sets in~$\mathcal G$ are pairwise disjoint,
\[
\begin{aligned}
r_{\mathcal S}(\widetilde{\bm q})
&=
\sum_{\mathcal E\in\mathcal G}
r_{\mathcal E}(\bm q^{\mathcal E})
=
-\bm A_\Theta^\top\widetilde{\bm\mu},\\
\mathcal H_{\mathcal S}(\widetilde{\bm q})
&=
\sum_{\mathcal E\in\mathcal G}
\mathcal H_{\mathcal E}(\bm q^{\mathcal E}).
\end{aligned}
\]
Moreover, $\widetilde{\bm\mu}\ge\bm0$, and for every
$\bm\theta\in\Theta$,
\[
r_{\mathcal S}(\widetilde{\bm q})^\top\bm\theta
=
-\widetilde{\bm\mu}^\top\bm A_\Theta\bm\theta
\ge
-\widetilde{\bm\mu}^\top\bm b_\Theta.
\]
Lemma~\ref{lem:fenchel_leaf_bound} therefore gives
\[
H(\mathcal D_s)
\ge
\lambda N_0
+\mathcal H_{\mathcal S}(\widetilde{\bm q})
-\bm b_\Theta^\top\widetilde{\bm\mu},
\]
which is the stated sum. Part~(i) follows by taking
$\mathcal G=\{\mathcal E\}$.
\end{proof}

\subsection{Correctness of the Bound-Pruned Recursion}\label{app:pruned_dp_correctness}

The capped Bellman comparisons in Section~\ref{sec:full_pruned_dp} are valid because
$L_a\le Q_a$ and, for each split,
$L_0+L_1\le V(\mathcal D_s^{f=0},d+1)+V(\mathcal D_s^{f=1},d+1)=Q_f$.
Backward induction on the remaining depth shows that the leaf solver's
cap guarantee is preserved by the Bellman recursion. At a leaf,
Lemma~\ref{lem:fenchel_leaf_bound} and
Propositions~\ref{prop:newton_linear_bound} and
\ref{prop:cross_set_propagation} make every accepted direct or propagated
bound valid. An infeasible auxiliary probability vector causes no return; the
solver simply advances. Hence a leaf call returns its exact cost when it is
below~$U$ and returns \textsf{PRUNED} only when a valid bound, or the exact
cost, is at least~$U$.

Now consider a split under cap~$U$, with valid child-state bounds
$L_0$ and~$L_1$. If $L_0+L_1\ge U$, the split is pruned immediately. Otherwise
the first child is called under~$U-L_1$. A pruning return proves that its value
is at least~$U-L_1$, and hence that the split value is at least~$U$. If the
first child returns $v_0<U-L_1$, the second is called under~$U-v_0$. A pruning
return again proves that the split value is at least~$U$; two exact returns give
the exact split value below~$U$.

At a state, the algorithm retains exact action values below the incoming cap
and tightens that cap whenever it finds a better action. An action pruned under
the tightened cap cannot improve the incumbent. After
each action, the refreshed state bound~$L_V$ is compared with the current
cap. If $L_V$ reaches an exact incumbent, that incumbent is the Bellman
minimum and the remaining actions are skipped. If no exact incumbent exists
and $L_V$ reaches the incoming cap, the state returns \textsf{PRUNED}.
Otherwise evaluation continues. The root cap is $+\infty$, so the root
returns $V(\mathcal D,0)$.

Finally, before a lower-bound return, Algorithm~\ref{alg:progressive_leaf_fit}
stores the current iterate, MNL context, pending SQP step, and running
bound. A resumed call therefore advances to the first checkpoint not yet
evaluated. After any sequence of calls, the evaluated checkpoints are exactly
a prefix of the uninterrupted constrained Newton solve. Exact completion
evaluates the full path; an interim lower-bound return stops at a strict
prefix.

\subsection{Proof of Proposition~\ref{prop:pricing_structure}}\label{app:pricing}

\begin{proof}[Proof of Proposition~\ref{prop:pricing_structure}]

Fix a leaf~$k$ and suppress the leaf index. Write
$A_j=e^{\alpha_{jk}+s}$, $\gamma_j=\gamma_{jk}$, and denote product~$j$'s
feasible price interval by $[\underline p_j,\bar p_j]$. For prices~$\bm p$, let
\[
\pi_j(\bm p,s)
=
\frac{A_j e^{-\gamma_j p_j}}
{1+\sum_{\ell\in\mathcal J}A_\ell e^{-\gamma_\ell p_\ell}},
\qquad
\pi_0(\bm p,s)=1-\sum_{j\in\mathcal J}\pi_j(\bm p,s),
\]
and $R(\bm p,s)=\sum_jp_j\pi_j(\bm p,s)$.

\medskip\noindent\textbf{Scalar sufficiency.}\enskip
For a fixed leaf, the revenue objective depends on~$\bm{x}$ only through
$A_j=e^{\alpha_j+\bm{\beta}^{\top}\bm{x}}=e^{\alpha_j+s}$, and hence only through~$s$.
Two feature vectors routed to the same leaf with the same scalar index therefore induce the same pricing problem and the same optimizer.

\medskip\noindent\textbf{Clipped markup path.}\enskip
Existence follows from continuity of $R(\bm p,s)$ and compactness of the price
box. Let $\bm p^*$ be a global maximizer and write
$R^*=R(\bm p^*,s)$. The MNL revenue gradient is
\[
\frac{\partial R}{\partial p_j}(\bm p,s)
=
\pi_j(\bm p,s)\{1-\gamma_j(p_j-R(\bm p,s))\}.
\]
Introducing multipliers $\bar\lambda_j\ge0$ and $\underline\lambda_j\ge0$ for
the upper and lower bounds, stationarity gives
\[
\pi_j^*\{1-\gamma_j(p_j^*-R^*)\}-\bar\lambda_j+\underline\lambda_j=0,
\qquad j\in\mathcal J .
\]
Since $\pi_j^*>0$, complementary slackness implies the clipped
representation
\[
p_j^*
=
\operatorname{clip}\!\left(1/\gamma_j+R^*,\underline p_j,\bar p_j\right),
\qquad j\in\mathcal J,
\]
where $\operatorname{clip}(x,a,b) := \min\{\max\{x,a\},b\}$.
Thus every global maximizer lies on a clipped common-markup path with markup
$\mu=R^*$.

\medskip\noindent\textbf{Unique fixed point.}\enskip
For any scalar~$\mu$, define
\[
p_j(\mu)=\operatorname{clip}(1/\gamma_j+\mu,\underline p_j,\bar p_j),
\qquad
R(\mu,s)=R(\bm p(\mu),s),
\qquad
F(\mu,s)=R(\mu,s)-\mu .
\]
The function $F(\cdot,s)$ is continuous, $F(0,s)\ge0$, and
$F(\bar p_{\max},s)\le0$ because
$R(\bm p,s)\le \bar p_{\max}\sum_j\pi_j(\bm p,s)\le\bar p_{\max}$.
Hence a root exists on $[0,\bar p_{\max}]$.

It remains to show uniqueness. On any interval of~$\mu$ where the active set is
fixed, let $\mathcal U$ be the set of unclamped products and define
$\bar\gamma_{\mathcal U}:=\sum_{j\in\mathcal U}\gamma_j\pi_j$. Along this
interval, $dp_j/d\mu=1$ for $j\in\mathcal U$ and $dp_j/d\mu=0$ otherwise. The
MNL share derivative gives
\[
\frac{\partial\pi_j}{\partial\mu}
=
\pi_j\{\bar\gamma_{\mathcal U}-\gamma_j\mathbb I(j\in\mathcal U)\}.
\]
Therefore
\begin{align}
\frac{\partial R}{\partial\mu}
&=
\sum_{j\in\mathcal U}\pi_j
+\sum_{j\in\mathcal J}p_j
\pi_j\{\bar\gamma_{\mathcal U}-\gamma_j\mathbb I(j\in\mathcal U)\} \notag\\
&=
\bar\gamma_{\mathcal U}R-\mu\bar\gamma_{\mathcal U}
=
\bar\gamma_{\mathcal U}\{R(\mu,s)-\mu\}, \label{eq:key_identity}
\end{align}
where the second equality uses $\gamma_jp_j(\mu)=1+\gamma_j\mu$ for
$j\in\mathcal U$. Thus, within the same interval,
\[
\frac{\partial F}{\partial \mu}(\mu,s)
=
\bar\gamma_\mathcal{U}F(\mu,s)-1 .
\]
At any root of~$F$, the derivative is~$-1$. Immediately after such a root the
function is negative, and whenever $F<0$ its derivative is strictly negative.
By continuity at points where a price reaches or leaves a bound, $F$ cannot return to zero. The root
is therefore unique. Since every global maximizer must lie on the clipped path
and solve this fixed point, the pricing problem has a unique optimizer.

\medskip\noindent\textbf{Monotonicity and Lipschitz continuity.}\enskip
Let $\mu^*(s)$ denote the unique root. At the root,
\begin{equation}\label{eq:foc_identity}
R(\mu^*,s) = \mu^*.
\end{equation}
On any interval over which the same prices remain unclipped, the identity above gives
$F_\mu(\mu^*(s),s)=-1$. Also, because increasing~$s$ scales all product
attractiveness terms proportionally,
$\partial\pi_j/\partial s=\pi_j\pi_0$ at fixed~$\mu$, and hence
$F_s(\mu^*(s),s)=R_s(\mu^*(s),s)=\pi_0\mu^*(s)$. The implicit function theorem
therefore yields
\begin{equation}\label{eq:dmu_ds}
\frac{d\mu^*}{ds}=\pi_0\,\mu^*(s)\ge0 .
\end{equation}
The root is continuous in~$s$ by uniqueness and continuity of~$F$. Since the clipping path has only finitely many lower- and upper-bound thresholds, $\mu^*(s)$ is piecewise continuously differentiable, with matching one-sided derivatives at every threshold. Equation~\eqref{eq:dmu_ds} therefore extends monotonicity across these thresholds. Since the clipping map is nondecreasing, each
$p_j^*(s)=\operatorname{clip}(1/\gamma_j+\mu^*(s),\underline p_j,\bar p_j)$ is
nondecreasing in~$s$.

Finally, \eqref{eq:foc_identity} implies
$\mu^*(s)=R(\mu^*(s),s)\le \bar p_{\max}(1-\pi_0)$. Therefore, wherever the
derivative exists,
\[
\left|\frac{d\mu^*}{ds}\right|
\le
\pi_0(1-\pi_0)\bar p_{\max}
\le
\frac{\bar p_{\max}}{4}.
\]
Integrating this derivative bound over the finite partition induced by these thresholds gives
$|\mu^*(s)-\mu^*(s')|\le\bar p_{\max}|s-s'|/4$. The clipped price map is
$1$-Lipschitz in~$\mu$, which gives the stated $\ell_\infty$ bound for
$\bm p^*(s)$.
\end{proof}

\subsection{Proof of Theorem~\ref{thm:likelihood_pricing}}\label{app:likelihood_pricing}

\begin{proof}[Proof of Theorem~\ref{thm:likelihood_pricing}]
Fix leaf~$k$ and suppress its index where no confusion can arise.

\medskip\noindent\textbf{Restricted likelihood growth.}\enskip
Correct specification gives
\[
\overline{\mathcal L}_k(\bm\theta)
-\overline{\mathcal L}_k(\bm\theta_k^0)
=
\mathbb E\!\left[
\operatorname{KL}
\{\bm\pi_k(\bm X,\bm P;\bm\theta_k^0)
\,\|\,
\bm\pi_k(\bm X,\bm P;\bm\theta)\}
\,\middle|\,
\bm X\in\mathcal R_k
\right].
\]
The moment condition on~$\bm P$, compactness of~$\mathcal X$, and the smooth
MNL likelihood permit differentiation under the expectation on a bounded
neighborhood of~$\bm\theta_k^0$. Hence
$\nabla\overline{\mathcal L}_k(\bm\theta_k^0)=\bm0$, including when
$\bm\theta_k^0$ lies on the boundary of~$\Theta$.

Let $\bm d=\bm\theta-\bm\theta_k^0$. Convexity of~$\Theta$ keeps the
segment $\bm\theta_k^0+t\bm d$, $t\in[0,1]$, feasible, and
$\bm d\in T_\Theta(\bm\theta_k^0)$. By continuity of the Hessian and
\eqref{eq:restricted_fisher}, there is a neighborhood $U_k^{\mathcal L}$ of
$\bm\theta_k^0$ on which
\[
\bm d^\top
\nabla^2\overline{\mathcal L}_k(\bm\vartheta)\bm d
\ge
\frac{m_k}{2}\|\bm d\|_2^2
\]
for every~$\bm\vartheta$ on such a segment. Taylor's integral formula then
yields
\begin{equation}\label{eq:restricted_likelihood_growth}
\overline{\mathcal L}_k(\bm\theta)
-\overline{\mathcal L}_k(\bm\theta_k^0)
\ge
\frac{m_k}{4}\|\bm\theta-\bm\theta_k^0\|_2^2.
\end{equation}

\medskip\noindent\textbf{Uniform price stability.}\enskip
For $\bm\theta=(\bm\alpha,\bm\beta,\bm\gamma)$ and scalar~$\mu$, define
\[
p_j(\mu;\bm\theta)
=
\operatorname{clip}
\left(
\frac{1}{\gamma_j}+\mu,
\underline p_j,
\bar p_j
\right),
\qquad
F(\mu;\bm\theta,\bm x)
=
R_k\{\bm p(\mu;\bm\theta),\bm x;\bm\theta\}-\mu.
\]
The argument for Proposition~\ref{prop:pricing_structure} applies to every
$\bm\theta\in\Theta$: $F$ has a unique root
$\mu^*(\bm\theta,\bm x)\in[0,\bar p_{\max}]$, and the corresponding clipped
prices form~$\bm p_k^*(\bm x;\bm\theta)$. On an interval with fixed set
$\mathcal U$ of unclipped products,
\begin{equation}\label{eq:parameter_root_derivative}
\frac{\partial F}{\partial\mu}
=
\left(\sum_{j\in\mathcal U}\gamma_j\pi_{jk}\right)F-1.
\end{equation}

Joint continuity of~$F$, compactness of the common root interval, and root
uniqueness imply continuity of
$(\bm\theta,\bm x)\mapsto\mu^*(\bm\theta,\bm x)$. Choose a compact
neighborhood~$\mathcal K_k$ of~$\bm\theta_k^0$ relative to~$\Theta$ and let
$\bar\gamma=\max_{\bm\theta\in\mathcal K_k,j\in\mathcal J}\{\gamma_j\}$.
The root graph over~$\mathcal K_k\times\mathcal X$ is compact, so uniform
continuity gives a common neighborhood of this graph on which
$|F|\le1/(2\bar\gamma)$. After shrinking the parameter neighborhood if
necessary, \eqref{eq:parameter_root_derivative} therefore gives
$\partial F/\partial\mu\le-1/2$ between the roots associated with
$\bm\theta_k^0$ and~$\bm\theta$. This inequality extends across the finitely
many clipping breakpoints because~$F$ is absolutely continuous in~$\mu$.

On the same compact neighborhood, let $L_{F,k}$ be a uniform Lipschitz constant
for~$F$ in~$\bm\theta$. Comparing the two roots gives
\[
|\mu^*(\bm\theta,\bm x)-\mu^*(\bm\theta_k^0,\bm x)|
\le
2L_{F,k}\|\bm\theta-\bm\theta_k^0\|_2.
\]
Because $\gamma_j\ge\underline\gamma$, one may take
\[
L_{w,k}
=
\underline\gamma^{-2}
+2\sqrt{|\mathcal J|}\,L_{F,k}
\]
in~\eqref{eq:latent_markup_lipschitz}. Coordinatewise projection onto~$\mathcal P$
is nonexpansive, and hence
\begin{equation}\label{eq:price_parameter_stability}
\sup_{\bm x\in\mathcal X}
\|\bm p_k^*(\bm x;\bm\theta)
-\bm p_k^*(\bm x;\bm\theta_k^0)\|_2
\le
L_{w,k}\|\bm\theta-\bm\theta_k^0\|_2.
\end{equation}

\medskip\noindent\textbf{Quadratic pricing loss.}\enskip
Write $\bm p^0=\bm p_k^*(\bm x;\bm\theta_k^0)$,
$\widehat{\bm p}=\bm p_k^*(\bm x;\bm\theta)$, and
$\Delta\bm p=\widehat{\bm p}-\bm p^0$. Taylor's theorem gives
\begin{equation}\label{eq:pricing_loss_taylor}
\begin{aligned}
&R_k(\bm p^0,\bm x;\bm\theta_k^0)
-R_k(\widehat{\bm p},\bm x;\bm\theta_k^0)\\
&\qquad\le
\left|
\nabla_{\bm p}R_k(\bm p^0,\bm x;\bm\theta_k^0)^\top
\Delta\bm p
\right|
+\frac{M_{R,k}}{2}\|\Delta\bm p\|_2^2.
\end{aligned}
\end{equation}
For any closed interval~$I$ and all $u,v\in\mathbb R$, Euclidean projection
satisfies
\begin{equation}\label{eq:interval_projection}
|\{u-\Pi_I(u)\}\{\Pi_I(v)-\Pi_I(u)\}|
\le
|u-v|^2.
\end{equation}
Indeed, the left side vanishes when $u\in I$; otherwise it can be nonzero only
when~$v$ crosses the nearest endpoint, in which case both factors are bounded
by~$|u-v|$.

Let $\bm w^0=\bm w_k(\bm x;\bm\theta_k^0)$,
$\widehat{\bm w}=\bm w_k(\bm x;\bm\theta)$, and
$\pi_{jk}^0:=\pi_{jk}(\bm x,\bm p^0;\bm\theta_k^0)$. At the true optimum,
\[
\frac{\partial R_k}{\partial p_j}
(\bm p^0,\bm x;\bm\theta_k^0)
=
\pi_{jk}^0\gamma_{jk}^0(w_j^0-p_j^0).
\]
Applying~\eqref{eq:interval_projection} with
$I=[\underline p_j,\bar p_j]$, $u=w_j^0$, and $v=\widehat w_j$ yields
\[
\left|
\nabla_{\bm p}R_k(\bm p^0,\bm x;\bm\theta_k^0)^\top
\Delta\bm p
\right|
\le
\bar\gamma_k\|\widehat{\bm w}-\bm w^0\|_2^2.
\]
Projection nonexpansiveness also gives
$\|\Delta\bm p\|_2\le\|\widehat{\bm w}-\bm w^0\|_2$. Substitution into
\eqref{eq:pricing_loss_taylor} and use of
\eqref{eq:latent_markup_lipschitz} give
\begin{equation}\label{eq:quadratic_pricing_loss}
\begin{aligned}
0
&\le
R_k(\bm p^0,\bm x;\bm\theta_k^0)
-R_k(\widehat{\bm p},\bm x;\bm\theta_k^0)\\
&\le
L_{w,k}^2
\left(\bar\gamma_k+\frac{M_{R,k}}{2}\right)
\|\bm\theta-\bm\theta_k^0\|_2^2.
\end{aligned}
\end{equation}
The lower bound follows from the optimality of~$\bm p^0$ and does not require
the two price vectors to have the same active set.

Combining~\eqref{eq:restricted_likelihood_growth} and
\eqref{eq:quadratic_pricing_loss} proves~\eqref{eq:likelihood_pricing_constant}
on $U_k=U_k^{\mathcal L}\cap U_k^w$. For the global extension, convexity and
\eqref{eq:restricted_fisher} make~$\bm\theta_k^0$ the unique minimizer of
$\overline{\mathcal L}_k$ over~$\Theta$: a second minimizer would make the
objective constant on the feasible segment joining the two points, contradicting
positive curvature at~$\bm\theta_k^0$ in that direction. If $\Theta\setminus U_k$ is nonempty,
compactness gives
\[
\eta_k
=
\min_{\bm\theta\in\Theta\setminus U_k}
\{\overline{\mathcal L}_k(\bm\theta)
-\overline{\mathcal L}_k(\bm\theta_k^0)\}
>0.
\]
Since revenue lies in $[0,\bar p_{\max}]$, the theorem holds on all of~$\Theta$
with $C_k^{\mathrm{glob}}=\max\{C_k,\bar p_{\max}/\eta_k\}$. If
$\Theta\subseteq U_k$, take $C_k^{\mathrm{glob}}=C_k$.
\end{proof}

\subsection{Proof of Proposition~\ref{prop:lookup}}\label{app:lookup}

\begin{proof}[Proof of Proposition~\ref{prop:lookup}]

Fix a leaf~$k$ and suppress the leaf index; write
$[\underline s,\bar s]$ for $[\underline s_k,\bar s_k]$ and
$\Delta$ for $\Delta_k$.

\textbf{Part~(i).}
For any $s \in [\underline{s},\bar{s}]$, the nearest grid point~$s_m$ satisfies $|s - s_m| \le \Delta/2$.
By the Lipschitz bound in Proposition~\ref{prop:pricing_structure},
\[
\|\bm{p}^*(s) - \tilde{\bm{p}}(s)\|_\infty
\;\le\; \frac{\bar{p}_{\max}}{4}\,\frac{\Delta}{2}
\;=\; \frac{\bar{p}_{\max}}{8}\,\Delta.
\]

\medskip\noindent\textbf{Part~(ii).}
The proof works in the price vector space, where the revenue function
$R(\bm{p},s) = \sum_j p_j\,\pi_j(\bm{p},s)$ is $C^\infty$ in~$\bm{p}$
for any~$s$, since the MNL shares are smooth functions of the prices.
This avoids regime-crossing issues that arise in the one-dimensional markup space.

Write $\bm{p}^* = \bm{p}^*(s)$ and $\tilde{\bm{p}} = \bm{p}^*(s_m)$, and set $\varepsilon := \bar{p}_{\max}\Delta/8$.
Define $\delta\bm{p} := \tilde{\bm{p}} - \bm{p}^*$ and the path
$F(t) := R(\bm{p}^* + t\,\delta\bm{p},\; s)$ for $t\in[0,1]$.
Since both $\bm{p}^*$ and $\tilde{\bm{p}}$ lie in~$\mathcal P$,
so does the convex combination $\bm{p}(t) := (1-t)\bm{p}^* + t\,\tilde{\bm{p}}$ for all $t\in[0,1]$.
By part~(i), $\|\delta\bm{p}\|_\infty \le \varepsilon$.
By the integral form of Taylor's theorem:
\begin{equation}\label{eq:taylor_price}
F(1) = F(0) + F'(0) + \int_0^1(1-t)\,F''(t)\,dt.
\end{equation}
Since $\bm{p}^*$ maximizes $R(\cdot,s)$ over~$\mathcal P$,
the revenue loss $F(0) - F(1) \ge 0$.
We bound the two terms separately.

\medskip\noindent\textbf{Bounding $|F'(0)|$ (first-order term).}\enskip
Since $\frac{\partial R}{\partial p_j} = \pi_j\bigl[1 - \gamma_j(p_j - R)\bigr]$,
we have $F'(0) = \sum_j \pi_j\,\varphi_j\,\delta p_j$,
where $\varphi_j := 1 - \gamma_j(p_j^* - R^*)$ and $R^* = R(\bm{p}^*,s) = \mu^*(s)$
by~\eqref{eq:foc_identity}.
We classify each product~$j$:
\begin{itemize}
\item \textbf{Unclamped at~$s$}
($\underline{p}_j < p_j^* < \bar{p}_j$):
$p_j^* = 1/\gamma_j + \mu^*$,
so $\varphi_j = 1 - \gamma_j(1/\gamma_j + \mu^* - \mu^*) = 0$.
Contribution: $0$.
\item \textbf{Clamped at the same bound at both $s$ and~$s_m$}
(e.g., $p_j^* = \tilde{p}_j = \bar{p}_j$, or both equal~$\underline{p}_j$):
$\delta p_j = 0$.
Contribution: $0$.
\item \textbf{Regime-crossing} (clamped at~$s$ but with a different status at~$s_m$, either unclamped or clamped at a different bound):
Neither $\varphi_j$ nor $\delta p_j$ is zero, but both are $O(\Delta)$.
(If~$j$ is unclamped at~$s$ but clamped at~$s_m$, the first-order condition gives $\varphi_j = 0$,
so this case reduces to the first case above.)
\end{itemize}
For a regime-crossing product~$j$
(say, clamped at $p_j^* = \bar{p}_j$ at~$s$ and unclamped at~$s_m$),
there exists a regime boundary~$s_c$ between~$s$ and~$s_m$
where $\bar{p}_j = 1/\gamma_j + \mu^*(s_c)$,
giving $\varphi_j(s_c) = 1 - \gamma_j(1/\gamma_j) = 0$.
Therefore
\[
|\varphi_j|
= |1 - \gamma_j(\bar{p}_j - \mu^*(s))|
= \gamma_j\,|\mu^*(s) - \mu^*(s_c)|
\le \gamma_j\,\frac{\bar{p}_{\max}}{4}\,\frac{\Delta}{2}
= \gamma_j\,\varepsilon,
\]
where we used the Lipschitz bound on~$\mu^*$ established in the proof of Proposition~\ref{prop:pricing_structure}
and $|s - s_c| \le |s - s_m| \le \Delta/2$.
The same argument applies when $j$~is clamped at~$\underline{p}_j$
(with the boundary condition $\underline{p}_j = 1/\gamma_j + \mu^*(s_c)$, so $\varphi_j(s_c) = 0$ identically).
Combined with $|\delta p_j| \le \varepsilon$:
\begin{equation}\label{eq:F_prime_bound}
|F'(0)|
\;\le\; \sum_{j:\text{crossing}} \pi_j\,\gamma_j\,\varepsilon^2
\;\le\; \gamma_{\max}\,\varepsilon^2,
\end{equation}
where the last inequality uses $\sum_j \gamma_j\pi_j \le \gamma_{\max}\sum_j\pi_j \le \gamma_{\max}$.

\medskip\noindent\textbf{Bounding $|F''(t)|$ (second-order term).}\enskip
We have $F''(t) = \delta\bm{p}^\top\,\nabla^2 R(\bm{p}(t))\,\delta\bm{p}$.
At any price vector~$\bm{p}$ in the box, write
$\varphi_j(\bm{p}) := 1 - \gamma_j(p_j - R(\bm{p},s))$ for the marginal revenue factor.
The Hessian entries of~$R$ with respect to~$\bm{p}$ are, for $j\ne l$,
\[
H_{jl}
:= \frac{\partial^2 R}{\partial p_j\,\partial p_l}
= \pi_j\,\pi_l\bigl(\gamma_l\varphi_j + \gamma_j\varphi_l\bigr),
\]
and, for the diagonal,
\[
H_{jj}
= -\gamma_j\,\pi_j\bigl[\varphi_j(1-2\pi_j) + 1\bigr].
\]
Since $0 \le p_j \le \bar{p}_{\max}$ and $0 \le R \le \bar{p}_{\max}$
(the latter because $R = \sum_l p_l\pi_l \le \bar{p}_{\max}\sum_l\pi_l \le \bar{p}_{\max}$),
we have $|p_j - R| \le \bar{p}_{\max}$ and thus
$|\varphi_j| \le 1 + \gamma_{\max}\bar{p}_{\max}$
for all~$\bm{p}\in\mathcal P$.
The entry sums are bounded by
\begin{align*}
\textstyle\sum_j |H_{jj}|
&\;\le\; \gamma_{\max}(2 + \gamma_{\max}\bar{p}_{\max}), \\
\textstyle\sum_{j\ne l} |H_{jl}|
&\;\le\; 2\gamma_{\max}(1 + \gamma_{\max}\bar{p}_{\max}),
\end{align*}
where we used $\sum_j\gamma_j\pi_j \le \gamma_{\max}$
and $\sum_{j\ne l}\pi_j\pi_l(\gamma_j+\gamma_l) \le 2\gamma_{\max}$.
Therefore
\begin{equation}\label{eq:hessian_bound}
|F''(t)|
\;\le\; \|\delta\bm{p}\|_\infty^2\sum_{j,l}|H_{jl}|
\;\le\; \Gamma\,\varepsilon^2,
\quad\text{where}\quad
\Gamma := \gamma_{\max}(4 + 3\gamma_{\max}\bar{p}_{\max}).
\end{equation}

\medskip\noindent\textbf{Combining.}\enskip
From~\eqref{eq:taylor_price}, the revenue loss satisfies
\[
F(0) - F(1)
= -F'(0) - \int_0^1\!(1-t)\,F''(t)\,dt
\;\le\; |F'(0)| + \tfrac{1}{2}\max_{t\in[0,1]}\{|F''(t)|\}.
\]
Substituting \eqref{eq:F_prime_bound}, \eqref{eq:hessian_bound},
and $\varepsilon = \bar{p}_{\max}\Delta/8$:
\begin{align*}
R(\bm{p}^*,s) - R(\tilde{\bm{p}},s)
&\;\le\; \gamma_{\max}\,\varepsilon^2 + \tfrac{1}{2}\,\Gamma\,\varepsilon^2
\;=\; \bigl(\gamma_{\max} + \tfrac{1}{2}\Gamma\bigr)\,\varepsilon^2 \\
&\;=\; \tfrac{1}{2}\bigl(2\gamma_{\max} + \Gamma\bigr)\,\varepsilon^2
\;=\; \tfrac{1}{2}\gamma_{\max}\bigl(6 + 3\gamma_{\max}\bar{p}_{\max}\bigr)\,\varepsilon^2 \\
&\;=\; \frac{3\gamma_{\max}(2 + \gamma_{\max}\bar{p}_{\max})\,\bar{p}_{\max}^2}{128}\,\Delta^2.\qedhere
\end{align*}
\end{proof}

\section{Synthetic Data-Generating Process}\label{app:synth_dgp}

This appendix gives the full specification of the three synthetic regimes summarized in Section~\ref{sec:synth_dgp}. They share the same features, prices, and true tree and differ only in their leaf parameters.

\paragraph{Features and prices.}
Each instance consists of $N = 5{,}000$ independent observations. For each observation~$i$, we generate $|\mathcal{F}_{\textup{cont}}| = 5$ continuous features $x_1,\ldots,x_5 \overset{\mathrm{iid}}{\sim} \operatorname{Uniform}(0,1)$ and $|\mathcal{F}_{\textup{bin}}| = 4$ binary features $d_1,\ldots,d_4 \overset{\mathrm{iid}}{\sim} \operatorname{Bernoulli}(0.5)$, all mutually independent. The total feature count is $|\mathcal{F}| = 9$, of which five ($x_4, x_5, d_2, d_3, d_4$) are noise features that play no role in the true segmentation.

Each observation involves $|\mathcal{J}| = 2$ products with base prices $(p_1^{\mathrm{base}},p_2^{\mathrm{base}})=(20,12)$, reflecting moderate vertical differentiation. Observed prices are drawn as $p_{ij}\sim\operatorname{Uniform}(0.75p_j^{\mathrm{base}},1.25p_j^{\mathrm{base}})$ independently across observations, with a post-hoc swap to enforce $p_{i1}\geq p_{i2}$.

\paragraph{True tree structure.}
Preference heterogeneity is governed by a depth-3 asymmetric tree with five leaves, depicted in Figure~\ref{fig:dgp_tree}. The root splits on~$x_1$ at~$0.50$. The left branch ($x_1 < 0.50$) splits on the binary feature~$d_1$, producing segments~0 and~1 at depth~2. The right branch ($x_1 \geq 0.50$) splits sequentially on~$x_2$ and~$x_3$, producing segments~2, 3, and~4, with segment~4 reached only at depth~3. The tree is deliberately asymmetric: the left branch reaches depth~2 (one split below the root), while the right branch reaches depth~3 (two splits below the root). This asymmetry is designed to test whether each method correctly allocates depth budget to the more heterogeneous branch.

\begin{figure}[htb]
  \centering
  \includegraphics[width=0.78\linewidth]{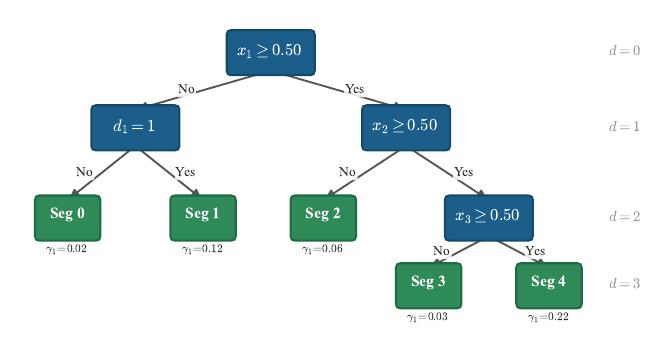}
  \caption{Common true tree in the three synthetic regimes: internal nodes (blue) show split conditions, and leaves (green) show segments with price sensitivity~$\gamma_1$ for product~1 at the Conflict endpoint. The asymmetric depth requires the tree construction method to allocate its depth budget across branches.\label{fig:dgp_tree}}
\end{figure}

\paragraph{MNL parameters.}
Conditional on segment assignment $k\in\{0,\ldots,4\}$, the systematic utility of product~$j$ is
\[
v_{ijk}=\alpha_{jk}+\bm\beta^\top\bm x_i-\gamma_{jk}p_{ij},
\]
with outside-option utility $v_{i0k}=0$. Choices are sampled from the resulting MNL probabilities, equivalently by adding independent standard-Gumbel shocks to all alternatives $j\in\{0\}\cup\mathcal J$. The feature coefficients $\bm{\beta} = (0.02, -0.01, 0.01, 0, \ldots, 0)^\top$ are common across segments and near zero, so that within-segment feature effects contribute minimally. Table~\ref{tab:dgp_params} reports the segment-specific parameters at the Conflict endpoint.

\begin{table}[H]
\TABLE
{Ground-truth MNL parameters at the Conflict endpoint.\label{tab:dgp_params}}
{\begin{tabular*}{\linewidth}{@{\extracolsep{\fill}}clrrrrr}
\toprule
& & \multicolumn{2}{c}{Intercept $\alpha_j$} & \multicolumn{2}{c}{Price sensitivity $\gamma_j$} & \\
\cmidrule(lr){3-4}\cmidrule(lr){5-6}
Seg.\ & Path & $\alpha_1$ & $\alpha_2$ & $\gamma_1$ & $\gamma_2$ & Purchase rate \\
\midrule
0 & $x_1\!<\!0.5,\; d_1\!=\!0$ & $0.20$ & $-1.00$ & $0.02$ & $0.12$ & 48\% \\
1 & $x_1\!<\!0.5,\; d_1\!=\!1$ & $-1.00$ & $\phantom{-}0.20$ & $0.12$ & $0.02$ & 50\% \\
2 & $x_1\!\geq\!0.5,\; x_2\!<\!0.5$ & $0.50$ & $-0.30$ & $0.06$ & $0.10$ & 42\% \\
3 & $x_1\!\geq\!0.5,\; x_2\!\geq\!0.5,\; x_3\!<\!0.5$ & $0.30$ & $-0.50$ & $0.03$ & $0.22$ & 44\% \\
4 & $x_1\!\geq\!0.5,\; x_2\!\geq\!0.5,\; x_3\!\geq\!0.5$ & $-0.50$ & $\phantom{-}0.30$ & $0.22$ & $0.03$ & 49\% \\
\bottomrule
\end{tabular*}}
{Feature coefficients $\bm{\beta} = (0.02, -0.01, 0.01, 0, \ldots, 0)^\top$ are common across segments. Purchase rate is the expected probability of choosing either product.}
\end{table}

\paragraph{Parameter regimes.}
Let $\bm\theta_k^{\mathrm C}$ denote the Conflict parameters in Table~\ref{tab:dgp_params}. To construct the Aligned endpoint, we first shrink the contrast between segments~0 and~1 by 25\% around their midpoint, component by component, for $\bm\alpha$, $\bm\beta$, and~$\bm\gamma$. We then add $0.75(1,-1)^\top$ to the intercept vector in both left-branch segments and subtract the same vector from each right-branch segment. Denote the resulting parameters by $\bm\theta_k^{\mathrm A}$. This transformation makes~$x_1$ the best one-step root split while preserving all three downstream branches after their BIC penalties.

For replication~$r$ in the Transition regime, the leaf parameters are
\[
\bm\theta_k^{(r)}=(1-w_r)\bm\theta_k^{\mathrm A}+w_r\bm\theta_k^{\mathrm C},
\qquad k=0,\ldots,4.
\]
The ten values $0.055,0.065,\ldots,0.145$ are randomly permuted across the ten seeds before either tree method is fitted. This interval straddles the population root-ranking transition: the one-step criterion favors~$x_1$ in five replications and~$d_1$ in five. A population audit based on expected MNL probabilities verifies that every true downstream split remains profitable after its BIC penalty throughout the interval. The Aligned and Conflict regimes use $\bm\theta_k^{\mathrm A}$ and $\bm\theta_k^{\mathrm C}$, respectively.

\paragraph{Adjusted Rand index.}
For the true and estimated partitions, let $n_{ab}$ be the number of test
observations in true segment~$a$ and estimated leaf~$b$, with margins
$n_{a\cdot}=\sum_b n_{ab}$ and $n_{\cdot b}=\sum_a n_{ab}$, and let
$n=\sum_{a,b}n_{ab}$. Writing
$E=\{\sum_a\binom{n_{a\cdot}}2\}
\{\sum_b\binom{n_{\cdot b}}2\}/\binom n2$, the adjusted Rand index is
\[
\operatorname{ARI}
=
\frac{\displaystyle\sum_{a,b}\binom{n_{ab}}2-E}
{\displaystyle
\frac12\left\{
\sum_a\binom{n_{a\cdot}}2+
\sum_b\binom{n_{\cdot b}}2
\right\}-E}.
\]

\section{Out-of-Sample Comparison in Airline Markets}\label{app:airline_oos}

The synthetic comparison in Section~\ref{sec:synth_results} uses a known MNL tree as the ground truth, allowing structural recovery to be measured directly. We test whether the performance ranking extends beyond that design using the pre-deployment records that trained the market-specific models in Section~\ref{sec:field}. The carrier collected these data from November~2024 through November~2025 across 48 airline OD markets, before the randomized experiment began.

Each record contains the prices offered for nine seat-group products, the resulting purchases, and features available at the time of pricing. The feature set includes days to departure, offer stage, ticket fare tier, and other booking and trip characteristics. Models are estimated separately by market, and market identities are omitted.

For this offline comparison, we reserve a stratified 20\% sample within each market for testing and estimate all methods on the remaining 80\%. The deployed models in Section~\ref{sec:field} use the full historical sample. All methods use the same features and parameter constraints: $-10\leq\alpha_{jk}\leq10$, $-5\leq\beta_{rk}\leq5$, and $10^{-4}\leq\gamma_{1k}\leq\cdots\leq\gamma_{9k}$ in each leaf~$k$, where $r$ indexes feature coefficients. For OCMT-MNL, we set $D=3$, $\lambda=\lambda_{\textup{AIC}}$, and $B=8$. GCMT-MNL uses the same tree settings as in Section~\ref{sec:synth_results}: maximum growth depth~14, minimum node weight~50, a quantile step of~0.05, and validation-based pruning. Its terminal models are then refitted on the full training sample.

Table~\ref{tab:airline_oos} reports equal-OD and observation-weighted OOS NLL. Figure~\ref{fig:airline_oos} shows the paired market-level differences and the confidence interval for their mean.

\begin{table}[!htb]
\TABLE
{Aggregate out-of-sample fit on historical airline data.\label{tab:airline_oos}}
{\begin{minipage}{\linewidth}
\begin{tabular*}{\linewidth}{@{\extracolsep{\fill}}lcc}
\toprule
Method & Equal-OD mean & Observation-weighted mean \\
\midrule
Single MNL & 0.6496 & 0.6379 \\
GCMT-MNL & 0.6465 & 0.6324 \\
OCMT-MNL & \textbf{0.6439} & \textbf{0.6312} \\
\bottomrule
\end{tabular*}
\end{minipage}}
{Lower values are better. OCMT-MNL sets $D=3$, $\lambda=\lambda_{\textup{AIC}}$, and $B=8$ in all 48 ODs. The observation-weighted mean covers 256,501 test observations.}
\end{table}

\begingroup
\setlength{\intextsep}{6pt}
\begin{figure}[H]
\centering
\captionsetup{skip=6pt}
\includegraphics[width=0.88\linewidth]{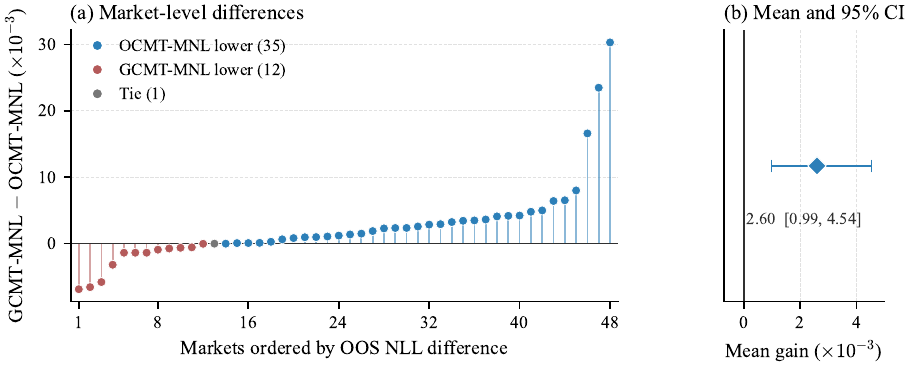}
\caption{Market-level OOS NLL comparison. \textbf{(a)}~Paired differences across the 48 markets, ordered from lowest to highest. Positive values favor OCMT-MNL. \textbf{(b)}~Equal-OD mean difference and 95\% confidence interval obtained by resampling markets.\label{fig:airline_oos}}
\end{figure}
\endgroup
\FloatBarrier

OCMT-MNL attains lower OOS NLL than GCMT-MNL in 35 of the 48 markets, with 12 losses and one numerical tie. For these counts, differences within $10^{-8}$ of zero per observation are treated as ties. Its equal-OD mean gain over GCMT-MNL is 0.00260 per observation, with a 95\% confidence interval of $[0.00099,0.00454]$ obtained by resampling markets. A paired two-sided Wilcoxon test gives $p=0.00052$. OCMT-MNL also has the lowest equal-OD and observation-weighted means among the three methods.

\section{Field-Experiment Sampling and Estimand}\label{app:field_estimand}

Repeated randomization and session-level sampling require care in defining the experimental contrast. The APO system randomizes the displayed policy independently at each login. After assignment, each session enters the data extract with probability 0.10, independently of its assigned policy and realized outcome. The extract is therefore a random sample of sessions rather than a panel containing every login of a sampled booking.

To state the implication precisely, index the logins of booking~$u$ by $t$, let $Z_{ut}$ denote the randomized policy, and let $S_{ut}$ indicate inclusion in the extract. Define $T_u=\min\{t:S_{ut}=1\}$. Conditional on randomization stratum~$g$ and $T_u<\infty$, independence of sampling and assignment gives
\[
\Pr\{Z_{uT_u}=a\mid g,T_u<\infty\}
=
\Pr\{Z_{ut}=a\mid g\}
\]
for every experimental policy~$a$. Thus selecting the earliest sampled login preserves the platform randomization: prior unobserved assignments and later reassignments are balanced across the first observed assignment. The comparison in Section~\ref{sec:field_design} consequently identifies the reduced-form effect associated with the policy displayed at that login, averaged over the remaining randomized exposure sequence.

This estimand is distinct from the effect of assigning one policy persistently throughout a booking and from the effect of the unobserved first platform exposure. Reassignment across visits mixes subsequent policy exposures and may therefore attenuate the contrast between persistent policy regimes. Independent reassignment preserves comparability across groups defined by first observed assignment. Our conditional randomization test respects the experimental design by permuting the first observed assignments within OD--calendar-week strata while preserving their observed counts; the booking bootstrap used for the confidence interval is stratified by OD and assignment. These choices follow standard guidance for repeated-unit online experiments \citep{AtheyImbens2017,KohaviTangXu2020}.

\section{Learned Segmentations in the Field Experiment}\label{app:segmentation}
The field experiment measures policy performance. The fitted models also reveal whether compact trees yield economically meaningful segmentation. We therefore summarize the split-sample tree structures and the full-data leaf policies across the 48 markets.

\paragraph{Tree size and sample size.}
The fitted trees have a median of 5 leaves (interquartile range $[3,6]$) and median depth 3. Figure~\ref{fig:48od_seg}(b) plots each OD's leaf count against $\log_{10}$ of its sample size; the Spearman correlation is $\rho=0.77$. Markets above the median training-sample size have a median of 6 leaves, compared with 4 below the median. Under the common depth cap and AIC penalty, larger samples can support finer segmentation without route-specific depth choices.

\paragraph{Selected split variables.}
Two of the eight candidate features account for roughly two thirds of all splits across the 48 fitted trees (Figure~\ref{fig:48od_seg}(a), bars $\mathrm{F1}$ and $\mathrm{F2}$). Days to departure (DTD, $\mathrm{F1}$) accounts for about 40\% of splits. This variable corresponds to the advance-purchase fences used in revenue-management practice, but OCMT-MNL selects its thresholds separately by market.

The offer-stage indicator ($\mathrm{F2}$) contributes 26\% of splits and is the root feature in 27 of the 48 trees. It distinguishes offers shown during ticket booking from those shown during a post-booking revisit. These split frequencies alone do not identify the behavioral mechanism behind the differences.

\begin{figure}[H]
\centering
\includegraphics[width=0.80\linewidth]{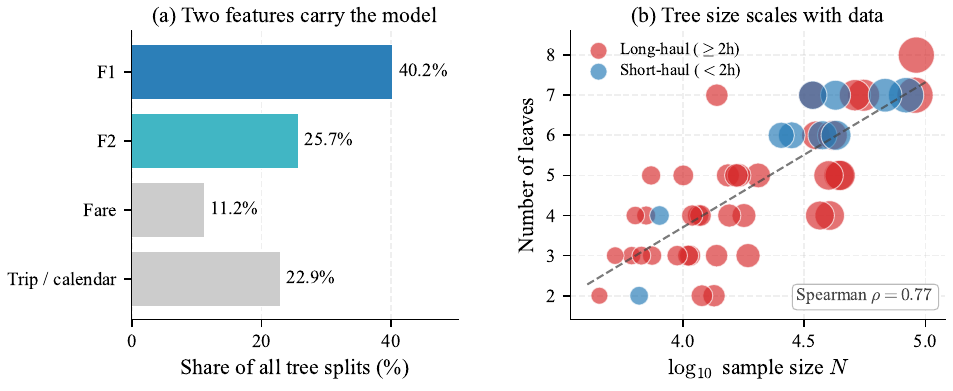}
\caption{Segmentations across the 48 OD markets. \textbf{(a)}~Split shares ($\mathrm{F1}$ = days to departure; $\mathrm{F2}$ = offer stage). \textbf{(b)}~Leaf count versus $\log_{10}N$; marker size represents~$N$, color denotes haul category, and $\rho=0.77$.\label{fig:48od_seg}}
\end{figure}

\paragraph{Leaf models and prescribed prices.}
Using the full-data deployment models, let $\widehat\gamma_{mjk}$ be the estimated price sensitivity for market~$m$, product~$j$, and leaf~$k$, and let $\bar r_{mjk}$ be the average ratio of recommended prices to static prices, weighting lookup grid points and both offer stages uniformly. Among the 360 market--product pairs with at least three leaves and nonzero variation in both quantities, 98.1\% have a negative cross-leaf Spearman correlation, 88.1\% have a correlation at most $-0.5$, and the median is $-0.80$. The median cross-leaf range of $\bar r_{mjk}$ is 14.2 percentage points, with 75.5\% exceeding two percentage points. The leaf models therefore induce material price differentiation across segments.

For Figure~\ref{fig:segment_price_profiles}, let $\mathcal I_k$ contain the historical observations routed to segment~$k$, and let $G$ denote the premium or standard group. We compute $\varepsilon_k^G$ at each observation's stage-specific static prices by applying a common proportional price increase to all products in~$G$, then aggregate predicted group demand over~$\mathcal I_k$. Panel~(b) evaluates the deployed lookup table for the same observations and reports one adjustment for each request--product pair. Thus both panels use the same routing, observations, and leaf parameters. In Segment~5, each premium product has a constant price across the historical observations. The premium-price spread therefore reflects differences between products, not price variation across requests for a given product. The elasticity is local to the static prices, whereas the prescribed prices solve the joint nine-product problem and need not vary monotonically with either group elasticity.

\end{document}